\pgfplotsset{compat=newest}
\newtheorem{thm}{Theorem}[section]
\newtheorem{prop}[thm]{Proposition}
\newtheorem{lem}[thm]{Lemma}
\newtheorem{cor}[thm]{Corollary}
\newtheorem{conj}[thm]{Conjecture}
\theoremstyle{definition}
\newtheorem{definition}[thm]{Definition}
\newtheorem{remark}[thm]{Remark}
\newtheorem{example}[thm]{Example}
\newcommand{\FF}{\mathcal{F}}
\title{Nerves, minors, and piercing numbers}
\author{Andreas~F.~Holmsen}
\address{A.~F.~Holmsen \\ Department of Mathematical Sciences \\ KAIST \\ Daejeon \\ South Korea} \email{andreash@kaist.edu}
\author{Minki~Kim}
\address{M.~Kim \\ Department of Mathematical Sciences \\ KAIST \\ Daejeon \\ South Korea} \email{kmk90@kaist.ac.kr}
\author{Seunghun~Lee}
\address{S.~Lee \\ Department of Mathematical Sciences \\ KAIST \\ Daejeon \\ South Korea} \email{prosolver@kaist.ac.kr}
\thanks{All authors were supported by Basic Science research Program through the National Research Foundation of Korea (NRF) funded by the Ministry of Education (NRF-2016R1D1A1B03930998). The first author was also partially supported by Swiss National Science Foundation grants 200020-165977 and 200021-162884. }
\date{\today}
\begin{document}

\begin{abstract}
We make the first step towards a ``nerve theorem'' for graphs. Let $G$ be a simple graph and let $\FF$ be a family of induced subgraphs of $G$ such that the intersection of any members of $\FF$ is either empty or connected. We show that if the nerve complex of $\FF$ has non-vanishing homology in dimension three, then $G$ contains the complete graph on five vertices as a minor. As a consequence we confirm a conjecture of Goaoc concerning an extension of the planar $(p,q)$ theorem due to Alon and Kleitman: Let $\FF$ be a finite family of open connected sets in the plane such that the intersection of any members of $\FF$ is either empty or connected. If among any $p \geq 3$ members of $\FF$ there are some three that intersect, then there is a set of $C$ points which intersects every member of $\FF$, where $C$ is a constant depending only on $p$.
\end{abstract}
\maketitle

\section{Introduction} \label{intros}

\subsection{Connected covers in graphs} \label{concovers}
Given a family of non-empty sets $\mathcal{F} = \{S_1, \dots, S_n\}$ we may associate with it an abstract simplicial complex on the vertex set $[n] = \{1, \dots, n\}$  defined as
\[N(\mathcal{F}) = \{ \sigma \subset [n] \; : \; \textstyle{\bigcap_{i\in \sigma}} S_i \neq \emptyset \}.\] 
The complex $N(\mathcal{F})$ is called the {\bf{\em nerve}} of the family $\mathcal{F}$.
Nerve complexes occur frequently in topological combinatorics, typically when $\mathcal{F} = \{K_1, \dots, K_n\}$ is a family of simplicial complexes. In this situation, an indispensable tool is a ``nerve theorem'' which allows us to relate the topology of $N(\mathcal{F})$ to that of $\bigcup_{i\in [n]}K_i$.

Informally speaking, a nerve theorem asserts that if $K_\sigma = \bigcap_{i\in \sigma}K_i$ is sufficiently connected (in terms of homotopy \cite{bjorner} or homology \cite{meshulam}) for every $\sigma \in N(\FF)$, then $N(\mathcal{F})$ adequately reflects the topology of $\bigcup_{i\in [n]}K_i$ (in terms of isomorphisms of certain homotopy or homology groups). A classical example is Borsuk's nerve theorem \cite{borsuk} which states that if $K_\sigma$ is contractible for every $\sigma\in N(\mathcal{F})$, then $N(\mathcal{F})$ is homotopy equivalent to $\bigcup_{i\in [n]}K_i$. (See \cite{xavi-adv,govc,hell, montejano} for recent variations of nerve theorems and \cite{bjorner-survey} for applications in combinatorics.)  

\medskip

The focus of this paper will be on the situation when $\mathcal{F} = \{G_1, \dots, G_n\}$ is a family of graphs. (All graphs considered here are finite, simple, and undirected.) By viewing  graphs as 1-dimensional simplicial complexes, we may apply one of the standard nerve theorems to obtain information about the homotopy type or the cycle space of the graph $G = \bigcup_{i\in [n]}G_i$, provided the intersection $G_\sigma = \bigcap_{i\in \sigma}G_i$ is a tree for every $\sigma\in N(\mathcal{F})$. Unfortunately, this condition is quite restrictive and the conclusion gives us rather limited structural information about the graph. 

Recall that a graph $H$ is a {\bf{\em minor}} of a graph $G$ if there exists pairwise disjoint connected subgraphs $G_v\subset G$, one for each vertex $v$ of $H$, such that for any pair of adjacent vertices $u$ and $v$ in $H$, there is an edge in $G$ connecting a vertex in $G_u$ to a vertex in $G_v$ \cite{mohar}. When $H$ is a minor of $G$ we denote this by $H\prec G$.

One of our main goals is to show that the homology of the nerve of $\mathcal{F}$ can reveal information about minors in the graph $G = \bigcup_{i\in [n]}G_i$. In order to make this work it is necessary to relax the condition that intersections are contractible, and instead require only that they are connected. This motivates one of the key concepts of this paper.

\begin{definition} \label{def:connected cover}
A {\bf{\em connected cover}} in a graph $G$ is a finite family $\mathcal{F} = \{G_1, \dots, G_n\}$ of induced subgraphs of $G$ such that 
 $G_\sigma = \bigcap_{i\in \sigma}G_i$ is connected for every $\sigma \in N(\mathcal{F})$.
\end{definition}

% COMMENT 1: I was slightly confused with the message in the paragraph below Definition 1.1. (Maybe, would it help to compare it with another case that isolated vertices can be achieved also in a connected graph?)
% CHANGE 1
In general we should not expect that the nerve of an arbitrary connected cover will tell us much about the structure of the graph.
Indeed, one can obtain rather trivial simplicial complexes in this way. For instance, if $\mathcal{F}$ consists of either pairwise disjoint induced subgraphs of $G$, or of many copies of the same induced subgraph of $G$, then $N(\mathcal{F})$ is either a set of isolated vertices, or a simplex.

On the other hand, the structure of the graph may be reflected in the nerves of certain connected covers. To see this, suppose that $K_{d+2}$, the complete graph on $d+2$ vertices, is a minor of $G$ for some $d\geq 1$. We may assume that $G$ is connected, or else we just consider the connected component containing $K_{d+2}$ as a minor. Then there is a partition of the vertex set $V = V(G)$ into $d+2$ parts $V_1, \dots, V_{d+2}$ such that the induced subgraphs $G[V_i]$ are connected for every $i$, and such that for every $i\neq j$ there exists an edge of $G$ connecting a vertex in $V_i$ to a vertex in $V_j$. If we let $G_i = G[V\setminus V_i]$, then the family $\mathcal{F} = \{G_1, \dots, G_{d+2}\}$ is a connected cover in $G$. Furthermore, $N(\FF)$ is the boundary of the $(d+1)$-dimensional simplex. This shows that if $K_{d+2}\prec G$, then there exists a connected cover in $G$ whose nerve has non-vanishing homology in dimension $d$.

\subsection{The homological dimension of a graph}
For a simplicial complex $K$ let $\tilde{H}_i(K)$ denote the $i$-th reduced homology group of $K$ with coefficients in $\mathbb{Z}_2$. A simple measure of the ``complexity'' of a connected cover is the greatest dimension for which the homology of its nerve is non-vanishing. Taking the maximum over all connected covers in a graph gives us the following graph invariant. 

\begin{definition} 
The {\bf{\em homological dimension}} of a graph $G$, denoted by $\gamma(G)$, is the greatest integer $d$ such that $\tilde{H}_d(N(\mathcal{F})) \neq 0$ for some connected cover $\mathcal{F}$ in $G$. For the single vertex graph $K_1$ we define $\gamma(K_1) = -1$.
\end{definition}

Note that the homological dimension is well-defined for any finite graph $G$, and that $\gamma(G)\geq 0$ for any graph $G$ with at least two vertices. It is also not difficult to show that the homological dimension is {\em minor-monotone} in the sense that if $H\prec G$, then $\gamma(H)\leq \gamma(G)$. 

We argued above that if $K_{d+2}\prec G$, then $\gamma(G)\geq d$. One of our main results is that the converse holds for small values of $d$. 

\begin{thm} \label{main-thm}
For any graph $G$ the following hold.
\begin{enumerate}
\item\label{k3minor} $K_3\prec G\iff\gamma(G)\geq 1$.
\item\label{k4minor} $K_4\prec G\iff\gamma(G)\geq 2$.
\item\label{k5minor} $K_5\prec G\iff\gamma(G)\geq 3$.
\end{enumerate}
\end{thm}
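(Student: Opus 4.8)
The forward directions ($K_{d+2}\prec G \Rightarrow \gamma(G)\geq d$) are already handled by the construction in the excerpt, so the content is in the three converse implications. I will describe a plan for each.

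\medskip

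\noindent\textbf{Plan for \eqref{k3minor}.} For the converse of \eqref{k3minor} I would argue contrapositively: if $K_3\not\prec G$ then $\gamma(G)\leq 0$. A graph with no $K_3$ minor is a forest (disjoint union of trees), so it suffices to show $\gamma(G)=0$ when $G$ is a tree. Given a connected cover $\FF=\{G_1,\dots,G_n\}$ in a tree $T$, each $G_i$ is a subtree, and subtrees of a tree have the Helly property: any pairwise-intersecting subfamily has a common vertex. This forces $N(\FF)$ to be the nerve of a family of subtrees, which (again by Helly for subtrees, or directly by induction on $|V(T)|$, contracting a leaf) is always collapsible, hence has vanishing reduced homology in all positive dimensions. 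So $\gamma(T)\leq 0$, and we are done. The only subtlety is checking that the \emph{induced}-subgraph and \emph{connected-intersection} hypotheses give us exactly ``family of subtrees,'' which is immediate in a tree.

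\medskip

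\noindent\textbf{Plan for \eqref{k4minor}.} Again contrapositively: if $K_4\not\prec G$ I want $\gamma(G)\leq 1$, i.e. $\tilde H_d(N(\FF))=0$ for all $d\geq 2$ and every connected cover $\FF$. Graphs with no $K_4$ minor are exactly the graphs of treewidth $\leq 2$, equivalently the subgraphs of series-parallel graphs, and they have a recursive structure: every such graph is built from edges by taking disjoint unions and glueing along at most one or two vertices (the ``$2$-clique-sum'' decomposition / SPQR-type structure). The plan is to induct on $|V(G)|$, using this structure to decompose the nerve. Concretely, if $G$ is $2$-connected and of treewidth $\leq 2$, it has a vertex of degree $2$, or more usefully a separating pair $\{u,v\}$ splitting $G$ into pieces glued along $\{u,v\}$; I would intersect the members of a connected cover $\FF$ with the two sides and relate $N(\FF)$ to the nerves on the pieces via a Mayer--Vietoris argument, showing no new $2$-dimensional homology can appear. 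The base case is that of a single edge (nerve is a simplex). The main obstacle here is bookkeeping: a member $G_i$ may straddle the separating pair, so the induced subgraphs on the two sides need not be connected, and one must massage the cover (possibly by breaking each $G_i$ at the separating pair and controlling how the nerve changes) so that the inductive hypothesis applies on each side. I expect this to require a careful lemma saying that along a $1$- or $2$-vertex cut the nerve behaves like a join/gluing that cannot raise homological dimension.

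\medskip

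\noindent\textbf{Plan for \eqref{k5minor}.} This is the heart of the paper and where the connectedness (rather than contractibility) of intersections really matters. Contrapositively, I want: $K_5\not\prec G \Rightarrow \gamma(G)\leq 2$. By Wagner's theorem, graphs with no $K_5$ minor are obtained by $2$- and $3$-clique-sums from planar graphs and copies of the Wagner graph $V_8$. So there are two regimes to handle: (i) the clique-sum operation does not increase $\gamma$, and (ii) $\gamma(G)\leq 2$ for $G$ planar and for $G=V_8$. For (ii) with $G$ planar, this is essentially a nerve theorem for plane graphs: a connected cover by induced connected subgraphs of a planar graph $G$ should have a nerve that embeds (up to homotopy / homology) in a surface built from the plane, forcing $\tilde H_d=0$ for $d\geq 3$; one clean route is to fatten each $G_i$ to a regular neighborhood in (a subdivision of) a plane drawing of $G$, observe these are planar regions whose intersections are connected, and invoke a nerve theorem to conclude $N(\FF)$ is homotopy equivalent to a subset of the plane, which has no homology above dimension $1$ — in fact one needs to be slightly more careful since the union of the neighborhoods need not be simply connected, but it is still a $2$-complex homotopy equivalent to a planar set, so $\tilde H_d=0$ for $d\geq 2$, even better than needed. (The Wagner graph $V_8$ can be checked by hand or by the same fattening idea on its toroidal/cylindrical structure, or by noting it has bounded size.) For (i), the clique-sum step, I would again use Mayer--Vietoris together with the observation that clique-sums are along cliques of size $\leq 3$, and a clique in $G$ of size $k$ contributes a nerve piece that is a simplex (hence acyclic), so glueing two graphs of homological dimension $\leq 2$ along such a piece yields homological dimension $\leq 2$. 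The main obstacle, and the place I expect the real work, is (i): making the Mayer--Vietoris argument go through requires knowing how an arbitrary connected cover $\FF$ of the clique-sum $G=G_1\cup_K G_2$ restricts to connected covers of $G_1$ and $G_2$ with a \emph{controlled} nerve on the overlap — members of $\FF$ crossing the separating clique $K$ must be split, and one has to verify both that the pieces remain induced and connected and that the splitting does not create homology. A clean way to package this is probably a ``glueing lemma'': if $G = A\cup B$ with $A\cap B$ an induced subgraph that is a clique (or more generally has $\gamma(A\cap B)\leq 1$ and the cover restricts nicely), then $\gamma(G)\leq\max(\gamma(A),\gamma(B),\gamma(A\cap B)+1)$. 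Proving this glueing lemma in the generality needed — for connected covers, not just nice ones — is the crux of the whole theorem.
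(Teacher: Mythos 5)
Your plans for parts \eqref{k3minor} and \eqref{k4minor} are essentially sound and close to the paper's route: part \eqref{k3minor} is done there via Borsuk's nerve theorem (your Helly-property-of-subtrees argument is an equivalent elementary substitute), and part \eqref{k4minor} is done exactly as you outline, via the clique-sum decomposition of $K_4$-minor-free graphs together with a Mayer--Vietoris ``gluing lemma'' (the paper's Theorem \ref{l-sum}). The bookkeeping issue you worry about for members straddling the separator is resolved cleanly because the separator is a \emph{clique}: if $G_\sigma$ is connected, then $G_\sigma\cap A$ is still connected, since any excursion of a path into $B\setminus A$ can be rerouted through an edge of the clique. The paper in fact proves the sharper statement $\gamma(G)=\max\{\gamma(A),\gamma(B)\}$ for a clique-sum, which also handles the $3$-clique-sums needed in part \eqref{k5minor}.

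The genuine gap is in your treatment of the planar case in part \eqref{k5minor}, which is the crux of the whole theorem. Fattening each $G_i$ to a regular neighborhood and ``invoking a nerve theorem'' cannot work: Borsuk's nerve theorem requires the intersections $G_\sigma$ to be \emph{contractible}, whereas here they are only connected, and a connected open planar set (an annulus, say) is not contractible. Indeed, the conclusion you claim to get --- that $N(\FF)$ is homotopy equivalent to a planar set and hence $\tilde H_d(N(\FF))=0$ for all $d\geq 2$ --- is \emph{false}: take $G=K_4$ (planar) and $\FF$ the four induced triangles; then $N(\FF)=\partial\Delta^3\simeq\mathbb S^2$ has $\tilde H_2\neq 0$, while $\bigcup G_i=K_4$ is a wedge of three circles. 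So the nerve is not homotopy equivalent to the union, and the correct bound for planar graphs is $\gamma(G)\leq 2$, which is tight. Establishing $\tilde H_3(N(\FF))=0$ for connected covers in planar graphs is where the real work lies: the paper does it by a chain-level argument (making the cover $2$-connected, successively ``filling faces'' of one member $G_x$, analyzing $(x,L)$-critical $3$-cycles, and killing them using a combinatorial identity expressing any simplicial $2$-cycle $\tau$ with an ordering $\prec$ of its triangles as the boundary of an explicit $3$-chain $T_{(\tau,\prec)}$, where the ordering comes from the cyclic order of attachment points on the boundary cycle of a face and planarity forces the needed crossing vertices to lie in $G_\sigma\cap G_x$). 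None of this is replaceable by a soft nerve-theorem argument, so as it stands your proposal does not prove part \eqref{k5minor}.
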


Part \eqref{k3minor} of Theorem \ref{main-thm} is an easy consequence of Borsuk's nerve theorem and can be argued as follows. Suppose $K_3$ is not a minor of $G$. Then $G$ is a forest, and for any connected cover $\mathcal{F} = \{G_1, \dots, G_n\}$ in $G$ it follows that $G_\sigma$ is contractible for every $\sigma\in N(\mathcal{F})$. Therefore Borsuk's nerve theorem implies that $N(\mathcal{F})$ is homotopy equivalent to $\bigcup_{i\in[n]}G_i$ which is either contractible, or a disjoint union of contractible sets. This shows that $\tilde{H}_i(N(\FF)) = 0$ for all $i\geq 1$. 

\medskip

Parts \eqref{k4minor} and \eqref{k5minor} of Theorem \ref{main-thm} rely on well-known structure theorems for graphs without $K_4$ or $K_5$ minors, due to Wagner \cite{wagner} (see also e.g. \cite[chapter 7.3]{diestel}). These results describe how such graphs can be constructed by clique-sum operations. This is useful in our situation because if $G$ is a clique-sum of graphs $G_1$ and $G_2$, we can show that \[\gamma(G)  = \max\{\gamma(G_1), \gamma(G_2)\},\] 
by application of the Mayer--Vietoris exact sequence. The crucial step towards establishing part \eqref{k5minor} is Theorem \ref{planar}, which states that $\gamma(G)\leq 2$ for any planar graph $G$. This is our most technical result and the proof relies heavily on a certain combinatorial identity (Lemma \ref{isoperm}) which is where the choice of homology with $\mathbb{Z}_2$-coefficients plays the most crucial role. 

\medskip

The basic properties of the homological dimension outlined above will be stated precisely and proven in Section \ref{invariant}. They suggest a close relationship between the homological dimension and other minor-monotone graph invariants \cite{edmonds,vdh,schr}, in particular the invariant discussed in \cite[section 5]{vdh} and \cite[section 18]{schr}.
Our results show that $K_3$, $K_4$, and $K_5$ minors in a graph are perfectly detected by its homological dimension. Despite this limited evidence, we are tempted to conjecture that this holds for all complete minors. 

\begin{conj} \label{minor-nerve}
For every positive integer $d$ and graph $G$, 
\[K_{d+2}\prec G \iff \gamma(G)\geq d.\]
\end{conj}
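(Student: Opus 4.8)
The forward implication $K_{d+2}\prec G \Rightarrow \gamma(G)\geq d$ is exactly the construction carried out in Subsection~\ref{concovers}, so what remains is the reverse implication, which I would phrase contrapositively: \emph{if $G$ has no $K_{d+2}$ minor, then $\tilde H_j(N(\FF))=0$ for every $j\geq d$ and every connected cover $\FF$ in $G$}. The natural plan is induction on $d$, the cases $d\leq 3$ being Theorem~\ref{main-thm}, and the argument should split into a decomposition step and a base step just as it does for $d\leq 3$. For the decomposition step one feeds in the Robertson--Seymour structure theorem: every $K_{d+2}$-minor-free graph is obtained by clique-sums of bounded order from graphs that are almost embeddable --- bounded Euler genus, a bounded number of apex vertices, a bounded number of vortices of bounded width --- in some surface. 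Combined with the clique-sum identity $\gamma(G)=\max\{\gamma(G_1),\gamma(G_2)\}$ whenever $G$ is a clique-sum of $G_1$ and $G_2$ (the Mayer--Vietoris argument already used for $d\leq 3$), this reduces everything to bounding $\gamma$ for a single almost-embeddable piece, i.e.\ to an analogue of Theorem~\ref{planar}.

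For that base step I would try to push the proof of Theorem~\ref{planar} through on surfaces. That proof rests on the combinatorial identity of Lemma~\ref{isoperm} together with Euler's formula $V-E+F=2$ for the sphere, and on a surface of Euler genus $g$ one has instead $V-E+F=2-g$, so one may hope the error term degrades the bound only by an additive $O(g)$, with each apex vertex and each vortex contributing a further bounded amount. A plausible auxiliary lemma in this direction is that attaching one apex vertex raises $\gamma$ by at most one: if $\widehat G$ is $G$ together with a new vertex adjacent to all of $V(G)$ and $\FF$ is a connected cover of $\widehat G$, then the members containing the apex span a full simplex in $N(\FF)$ while the members avoiding the apex form a connected cover of $G$, and analysing how $N(\FF)$ is glued from these two pieces should give $\gamma(\widehat G)\leq\gamma(G)+1$; bounded vortices ought to be handled similarly.

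The main obstacle is that such an argument only delivers a bound $\gamma(G)\leq C(d)$, where $C$ is the (large, ineffective) genus function coming from the structure theorem, whereas the conjecture demands the sharp value $\gamma(G)\leq d-1$. Closing this gap seems to require knowing that the topological complexity of an almost-embedded graph is controlled by the clique minors it actually contains rather than by the genus of the host surface --- concretely, something like \emph{a graph embedded in any surface and having no $K_{d+2}$ minor already satisfies $\gamma\leq d-1$} --- and I do not see how to extract this from the present techniques. Note also that pure induction on graph structure cannot by itself close the loop, since clique-sums do not decrease the excluded clique, so all the real work must happen in the (almost-)embeddable base case; moreover even a clean clique-sum description of $K_{d+2}$-minor-free graphs is unavailable once $d+2\geq 6$ (the $K_6$ case, Jørgensen's conjecture, is already delicate).

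A second, more self-contained line of attack bypasses the structure theorem and works directly with the Mayer--Vietoris spectral sequence of the cover $\FF$. Since every intersection $G_\sigma$ is a graph, hence a $1$-dimensional complex, this spectral sequence is concentrated in the two rows $q=0,1$: the $q=0$ row is the simplicial chain complex of $N(\FF)$ (each nonempty $G_\sigma$ being connected, so $H_0(G_\sigma;\mathbb{Z}_2)=\mathbb{Z}_2$), hence has homology $\tilde H_*(N(\FF))$; the $q=1$ row is the homology of the complex $p\mapsto\bigoplus_{|\sigma|=p+1}\tilde H_1(G_\sigma)$; and the whole sequence converges to $\tilde H_*(\bigcup_i G_i)$, which vanishes above dimension $1$ because $\bigcup_i G_i$ is again a graph. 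Chasing the two columns shows that for $d\geq 2$ the differential $d^2$ must be injective on $\tilde H_d(N(\FF))$, so $\tilde H_d(N(\FF))$ embeds into a subquotient of $\bigoplus_{|\sigma|=d-1}\tilde H_1(G_\sigma)$; in particular $\gamma(G)\geq d$ forces some $(d-1)$-fold intersection $G_\sigma$ to contain a cycle, hence a $K_3$ minor sitting coherently over the face $\sigma$. The plan would be to bootstrap this: assemble a coherent system of such cycles indexed by the $(d-1)$-faces of a subcomplex of $N(\FF)$ with nonvanishing $\tilde H_d$ into the branch sets of a $K_{d+2}$ minor. Turning this homological coherence into an honest large clique minor is the real difficulty, and it is plausibly exactly where the sharpness of the conjecture is hiding.
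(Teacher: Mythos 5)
This statement is Conjecture \ref{minor-nerve}: the paper does not prove it and only establishes the cases $d\leq 3$ (Theorem \ref{main-thm}), so there is no proof of the paper's to compare yours against, and your text is, as you acknowledge, a research programme rather than a proof. To record the gaps concretely: in the structure-theorem route, even the reduction to a single almost-embeddable piece needs care, since Robertson--Seymour clique-sums permit deleting edges of the joint clique (which Theorem \ref{l-sum} forbids, though minor-monotonicity can patch this) and the torsos need not themselves be $K_{d+2}$-minor-free; your proposed apex lemma $\gamma(\widehat G)\leq\gamma(G)+1$ is unproved, because the simplex spanned by the apex-containing members is glued to the nerve of the apex-avoiding members along a subcomplex of mixed faces, not along $N(\FF_0)$ itself, so the Mayer--Vietoris argument of Theorem \ref{l-sum} does not apply verbatim. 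Most fundamentally, as you say yourself, this route can at best yield $\gamma(G)\leq C(d)$ with $C$ the enormous constant of the structure theorem, whereas the conjecture asserts the sharp bound $d-1$, and nothing in the proposal addresses how to recover sharpness on an almost-embeddable piece --- which is exactly where the paper's proof for $d=3$ spends all of its effort (Section \ref{planar graphs}).

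Your second approach is sound as far as it goes: the two-row Mayer--Vietoris spectral sequence of the cover does force $d_2$ to be injective on $E_2^{d,0}\cong\tilde H_d(N(\FF))$ for $d\geq 2$, so $\tilde H_d(N(\FF))$ embeds in a subquotient of $\bigoplus_{|\sigma|=d-1}\tilde H_1(G_\sigma)$, and in particular a nonvanishing $\tilde H_d$ forces some $(d-1)$-fold intersection to contain a cycle. But the step you call the ``bootstrap'' --- assembling these cycles into $d+2$ disjoint connected branch sets pairwise joined by edges --- is the entire content of the conjecture, and no mechanism for it is offered. For comparison, the paper's proof of the $d=3$ case does not proceed this way at all: it decomposes $K_5$-minor-free graphs into planar pieces and $W_8$ via Wagner's theorem, and for planar $G$ runs a face-filling induction whose engine is the combinatorial identity $\partial T_{(\tau,\prec)}=\tau$ of Lemma \ref{isoperm} together with a planarity-forced crossing of two paths attached to the boundary cycle of a face; neither that identity nor the crossing argument has a known analogue for higher $d$. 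So the proposal is a reasonable map of the terrain, but it does not constitute a proof, and the statement should remain labelled as a conjecture.
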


\begin{remark}[Restriction to connected graphs] We will throughout restrict our attention to {\em connected graphs}. The main reason for this restriction is to avoid certain trivial exceptional cases in some of our statements and proofs. This restriction does not lead to any loss of generality, because if $\mathcal{F}$ is a connected cover of a disconnected graph $G$, then $N(\mathcal{F})$ is entirely determined by the restriction of $\FF$ to each individual component of $G$. As a consequence, for any graph $G$ with at least one edge, we have \[\gamma(G) = \max_H \gamma(H)\] where $H$ ranges over all connected components of $G$.
\end{remark}

\begin{remark}[The induced subgraph condition]
It is possible to relax the assumption that the members of a connected cover are {\em induced subgraphs}, but this does not lead to greater generality. To see this, consider a family $\mathcal{F} = \{G_1, \dots, G_n\}$ of subgraphs of $G$ (not necessarily induced) such that $G_\sigma$ is connected for every $\sigma \in N(\FF)$. If we define the family $\mathcal{F}' = \{G_1', \dots, G_n'\}$, where $G_i'$ is the subgraph induced by the vertices of $G_i$, it is obvious that $\mathcal{F}'$ is also a connected cover of $G$ and that the nerves $N(\mathcal{F})$ and $N(\mathcal{F}')$ are isomorphic.
\end{remark}

\medskip

\subsection{Piercing problems}
Another area in which nerve complexes play a prominent role is in the study of Helly-type theorems \cite{eckhoff, wenger} and the intersection patterns of convex sets \cite{tancer}. This follows from the fact that every non-empty convex set is contractible and that intersections of convex sets are convex. Therefore Borsuk's nerve theorem implies that the nerve of a family of convex sets is homotopy equivalent to the union of the members of the family. This viewpoint has uncovered many deep results which reach far beyond the setting of convexity \cite{xavi-adv,eck-upper,xbetti,upperbound,topcol,topamenta}. In fact, our notion of connected covers was motivated by such an application.

For positive integers $p\geq q$, a family of sets $\mathcal{F}$ has the $\bm{(p,q)}$ {\bf{\em property}} if among any $p$ members of $\mathcal{F}$ there are some $q$ that are intersecting, that is, $q$ members whose intersection is non-empty. The {\bf{\em piercing number}} of $\mathcal{F}$ is the minimal number $k$ such that $\mathcal{F}$ can be partitioned into $k$ intersecting subfamilies. As an application of connected covers we prove the following result conjectured by Xavier Goaoc (personal communication).

\begin{thm}\label{connected pq}
For any integers $p\geq q \geq 3$ there exists an integer $C = C(p,q)$ such that the following holds. Let $\mathcal{F}$ be a finite family of open connected sets in the plane satisfying the condition that the intersection of any  members of $\mathcal{F}$ is empty or connected. If $\mathcal{F}$ has the $(p,q)$ property, then the piercing number of $\mathcal{F}$ is at most $C$.
\end{thm}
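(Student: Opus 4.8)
\textbf{Proof proposal for Theorem \ref{connected pq}.}

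The plan is to reduce the piercing problem to a statement about the nerve complex and then invoke a fractional Helly theorem combined with the Alon--Kleitman machinery. First I would observe that a family $\FF$ of open connected sets in the plane in which every intersection is empty or connected behaves, from the point of view of its nerve, like a connected cover of a planar graph: after passing to a suitable planar ``intersection graph'' structure (taking a point in each pairwise intersection and joining points that lie in a common set, or more carefully building a planar graph $G$ on a generic arrangement of the boundaries so that each $S_i$ corresponds to an induced connected subgraph $G_i$ and $G_\sigma$ is connected exactly when $\bigcap_{i\in\sigma}S_i\neq\emptyset$), one gets that $N(\FF)$ is isomorphic to the nerve of a connected cover of a planar graph. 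The key point is that this construction preserves the nerve, so by Theorem \ref{planar} we have $\gamma$ of that graph at most $2$, hence $\tilde H_3(N(\FF))=0$; in fact the same holds for every induced subcomplex, since restricting $\FF$ to a subfamily again yields a connected cover of a planar graph. Thus $N(\FF)$ is a simplicial complex all of whose induced subcomplexes have vanishing $3$-dimensional $\mathbb{Z}_2$-homology.

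The second step is to deduce a fractional Helly property from this homological vanishing. A complex in which every induced subcomplex has trivial homology in dimensions $\geq d$ has ``homological Helly number'' bounded in terms of $d$, and by the topological fractional Helly theorem of Kalai--Meshulam (or the version in \cite{meshulam}, which is already cited) there is a constant $\beta = \beta(\alpha)>0$ so that if at least $\alpha\binom{n}{3}$ of the triples of an $n$-element subfamily are intersecting, then some $\beta n$ members have a common point. Concretely, one uses that $d$-Leray-type bounds follow from bounded homological dimension of all induced subcomplexes; here $\gamma\leq 2$ gives the relevant Leray number, and fractional Helly for the nerve follows. The $(p,q)$ property with $q\geq 3$, via a standard double-counting, guarantees that a positive fraction of triples in every subfamily are intersecting, so the fractional Helly hypothesis is met uniformly.

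The final step is the Alon--Kleitman argument: fractional Helly plus a bounded ``fractional piercing'' (weak epsilon-net / LP-duality) argument yields a bounded piercing number. Since $N(\FF)$ has bounded Leray number, one has weak $\epsilon$-nets of bounded size for the dual set system, and the LP-duality step of Alon--Kleitman converts the $(p,q)$ property into the desired bound $C(p,q)$ on the piercing number. The main obstacle, and the step requiring the most care, is the first one: making precise the passage from an arbitrary family of open connected sets in the plane with connected intersections to a connected cover of a \emph{planar} graph with the \emph{same} nerve. One must handle the arrangement of boundary curves carefully (general position, bounded complexity of each region, ensuring that the ``intersection of any members is empty or connected'' hypothesis is faithfully transferred to connectivity of $G_\sigma$ for all $\sigma$, not just pairs), and this is where the topology of the plane is genuinely used. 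Once that reduction is in place, Theorem \ref{planar} does the heavy lifting and the rest is the by-now-standard combinatorial pipeline (bounded Leray number $\Rightarrow$ fractional Helly $\Rightarrow$ $(p,q)\Rightarrow$ bounded piercing).
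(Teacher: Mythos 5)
Your overall pipeline matches the paper's: approximate $\mathcal{F}$ by a connected cover in a planar graph with the same nerve (the paper does this with a sufficiently fine triangulation of a large disk, so this step is routine rather than the main obstacle), deduce from Theorem \ref{planar} (via Theorem \ref{main-thm}) that the nerve is $3$-Leray, pass to a fractional Helly statement, and finish with the abstract $(p,q)$ machinery of Alon, Kalai, Matou{\v s}ek and Meshulam (Theorem \ref{abs pq}).

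There is, however, a genuine gap at the fractional Helly step. From the fact that $N(\mathcal{F})$ is $3$-Leray, Kalai's upper bound theorem gives the fractional Helly property for \emph{$4$-tuples} (a $d$-Leray complex has fractional Helly number $d+1$), not for triples. That only yields the theorem for $p\geq q\geq 4$. You assert that ``fractional Helly for the nerve follows'' in the form needed for triples, i.e.\ that $\alpha\binom{n}{3}$ intersecting triples force a common point among $\beta n$ members; but this is precisely Theorem \ref{fractional helly}, and it does not follow from the Leray property alone --- no known abstract result converts $3$-Leray into fractional Helly number $3$. The paper closes this gap with an additional combinatorial argument: the Erd{\H o}s--Simonovits supersaturation theorem produces many copies of $K^{(3)}_{(5,5,10)}$ among the intersecting triples, and a ``weak colorful Helly'' lemma (Lemma \ref{weakcol}) shows that each such copy forces an intersecting $4$-tuple, \emph{unless} the underlying graph contains a $K_5$ minor. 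Note that Lemma \ref{weakcol} is proved by explicitly building a $K_5$ minor from paths in the graph, so it uses the $K_5$-minor-free structure of $G$ directly and not merely the homological vanishing of the nerve. Without this $4$-to-$3$ reduction your argument only establishes the case $q\geq 4$.
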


In the special case when the members of $\mathcal{F}$ are convex sets, Theorem \ref{connected pq} reduces to the planar version of the celebrated $(p,q)$ theorem due to Alon and Kleitman \cite{pqthm}. In this case we have $C(3,3) = 1$, which is just Helly's theorem in the plane. Already the case $p=4$ and $q=3$ is much more difficult (the first absolute bound came from the Alon--Kleitman theorem) and the best known bounds are $3\leq C(4,3) \leq 13$ \cite{geza}. 

In the more general setting of Theorem \ref{connected pq}, simple examples show that $C(3,3)>1$, but we are not aware of any examples which show that $C(4,3)>3$. 

Our proof of Theorem \ref{connected pq} is based on a generalization of the $(p,q)$ theorem concerning nerve complexes, due to Alon et al. \cite{transversal}. More precisely, they show that the assertion of the $(p,q)$ theorem holds for all families $\mathcal{F}$ which satisfy a certain ``fractional Helly property''. The connection between this fractional Helly property and nerve complexes is based on combinatorial results due to Kalai \cite{upperbound,shifting} concerning the $f$-vectors of simplicial complexes whose induced subcomplexes have vanishing homology in sufficiently high dimensions. 

This is where our results on connected covers come into play. It is easy to show that any family of open sets as  described in Theorem \ref{connected pq} can be approximated by a connected cover in a planar graph, in the sense that their nerves are isomorphic. Since planar graphs do not have $K_5$ as a minor, Theorem \ref{main-thm} implies that the nerve has vanishing homology in all dimensions greater or equal to 3. Therefore the results of Kalai imply that we have the required fractional Helly property. (Strictly speaking, Kalai's results  only give us Theorem \ref{connected pq} for $p\geq q\geq 4$, but an additional combinatorial argument allows us to reduce the 4 to a 3.)

\medskip

% COMMENT 2: Would it make sense also to mention in the organization overview that the proof in Section 3 finishes the proof of the main theorem Thm. 1.3.?
The rest of the paper is organized as follows. In Section \ref{invariant} we establish several basic properties of the homological dimension. In particular, we show that $\gamma(G)$ is minor-monotone (Theorem \ref{l-minor}) and well-behaved under cliques-sums (Theorem \ref{l-sum}). 
% CHANGE 2
Section \ref{planar graphs} contains the proof that $\gamma(G)\leq 2$ for any planar graph $G$, which is the most crucial step in the proof of Theorem \ref{main-thm}.
The proof of Theorem \ref{connected pq} is given in Section \ref{helly-thms}, where we also establish some additional Helly-type theorems for connected covers in graphs, in particular a fractional Helly theorem (Theorem \ref{fractional helly}). Some final remarks are given in Section \ref{final}.

\section{Properties of the homological dimension} \label{invariant}

\subsection{Monotonicity} A simple but important observation is that the homological dimension of a graph is monotone with respect to the minor relation.

\begin{thm} \label{l-minor} 
If $H$ is a minor of $G$, then $\gamma(H)\leq \gamma(G)$.
\end{thm}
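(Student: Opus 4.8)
The plan is to lift an optimal connected cover in $H$ to a connected cover in $G$ with an isomorphic nerve. Since isomorphic simplicial complexes have isomorphic reduced homology in every dimension, this gives $\gamma(G)\geq\gamma(H)$ immediately. (The degenerate case $H=K_1$ needs no argument, as $\gamma(K_1)=-1\leq\gamma(G)$.)

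First I would fix the data witnessing $H\prec G$: pairwise disjoint connected subgraphs (the \emph{branch sets}) $\{G_v:v\in V(H)\}$ of $G$, together with, for each edge $uv\in E(H)$, a connecting edge of $G$ joining a vertex of $G_u$ to a vertex of $G_v$. Given a connected cover $\mathcal{F}=\{H_1,\dots,H_n\}$ in $H$, set $T_i=\bigcup_{v\in V(H_i)}V(G_v)$ and let $G_i'=G[T_i]$ be the induced subgraph of $G$ on $T_i$; put $\mathcal{F}'=\{G_1',\dots,G_n'\}$. Recall also that, since the members of $\mathcal{F}$ are induced, $H_\sigma=\bigcap_{i\in\sigma}H_i$ equals $H[W_\sigma]$ with $W_\sigma=\bigcap_{i\in\sigma}V(H_i)$.

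The one computation that matters is the following: because the branch sets are pairwise disjoint, a vertex $x$ of $G$ lies in $T_i$ if and only if the (unique) branch set containing $x$ is $G_v$ for some $v\in V(H_i)$. Consequently $\bigcap_{i\in\sigma}T_i=\bigcup_{v\in W_\sigma}V(G_v)$ for every $\sigma\subseteq[n]$, and hence $G'_\sigma=G\bigl[\bigcup_{v\in W_\sigma}V(G_v)\bigr]$. As each branch set is nonempty, $G'_\sigma\neq\emptyset$ if and only if $W_\sigma\neq\emptyset$ if and only if $\sigma\in N(\mathcal{F})$, so $N(\mathcal{F}')=N(\mathcal{F})$. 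It remains to verify that $\mathcal{F}'$ is a connected cover. Fix $\sigma\in N(\mathcal{F})$, so that $H_\sigma=H[W_\sigma]$ is connected. For each $v\in W_\sigma$ the induced subgraph $G[V(G_v)]$ is connected, since it contains the connected subgraph $G_v$; and for each edge $uv$ of $H_\sigma$ the corresponding connecting edge of $G$ lies in $G'_\sigma$ and joins $G[V(G_u)]$ to $G[V(G_v)]$. Since $H_\sigma$ is connected, these pieces are glued into the single connected graph $G'_\sigma$, as needed.

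I do not expect a real obstacle here — the statement is genuinely elementary — but the step that deserves care is the interchange of intersections with the branch decomposition: the identity $G'_\sigma=G\bigl[\bigcup_{v\in W_\sigma}V(G_v)\bigr]$ rests squarely on the branch sets being pairwise disjoint, and the argument breaks without that hypothesis. It is also worth noting that the branch sets need not be induced subgraphs of $G$, which is why the lift uses $G[V(G_v)]$ rather than $G_v$ itself, and that the empty face causes no trouble since $G'_\emptyset=G$ is connected.
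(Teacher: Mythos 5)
Your proposal is correct and follows essentially the same route as the paper: lift the connected cover through the branch sets, observe that pairwise disjointness makes the lift commute with intersections so the nerves agree, and check that connectivity of $H_\sigma$ plus the connecting edges yields connectivity of the lifted intersection. You simply spell out the details (including the harmless point that branch sets need not be induced) that the paper's shorter proof leaves implicit.
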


\begin{proof} Suppose $H\prec G$. Then there exists a family $\{V_w\}_{w\in V(H)}$ of pairwise disjoint subsets of $V(G)$ such that the induced subgraph $G[V_w]$ is connected for every $w\in V(H)$ and such that for every edge $uw$ in $H$ there is an edge in $G$ connecting a vertex in $V_u$ to a vertex in $V_w$. This induces a map from the subsets of $V(H)$ to the subsets of $V(G)$ which sends a subset $W\subset V(H)$ to the subset $V_W = \bigcup_{w\in W}V_w \subset V(G)$ and which satisfies the property that $G[V_W]$ is connected whenever $H[W]$ is connected. Applying this map to the members of a connected cover $\mathcal{F}$ in $H$ gives us a connected cover $\FF '$ in $G$ such that $N(\FF)$ and $N(\FF ')$ are isomorphic, and the claim follows. \end{proof}

Let $K$ be a simplicial complex and let $x$ be a vertex in $K$. 
We denote by $K-x$ the induced subcomplex of $K$ obtained by deleting the vertex $x$, that is,
\[K-x = \{\sigma \in K :  x\notin \sigma\}.\]
The {\bf{\em star}} of $x$, denoted by $\mbox{st}_K(x)$, is the subcomplex of $K$ defined as 
\[\mbox{st}_K(x)  =  \{\sigma \in K :  \sigma \cup \{x\} \in K\}.\]
Finally, the {\bf{\em link}} of $x$, denoted by $\mbox{lk}_K(x)$, is the subcomplex of $K$ defined as 
\[\mbox{lk}_K(x) =  \{\sigma \in K  :  x\notin \sigma,  \sigma\cup\{x\} \in K\} = \mbox{st}_K(x) - x.\] 
Note that we have the identities \[K = \mbox{st}_K(x) \cup (K-x)\; \text{ and } \; \mbox{lk}_K(x) = \mbox{st}_K(x)\cap (K-x).\] By applying the Mayer--Vietoris exact sequence to the pair $\mbox{st}_K(x)$ and $K-x$, and noting that $\mbox{st}_K(x)$ is contractible, we obtain the exact sequence
\begin{equation}\label{star-link}
\cdots \to \tilde{H}_{j}(\mbox{lk}_K(x)) \to  \tilde{H}_{j}(K-x) \to \tilde{H}_{j}(K) \to \tilde{H}_{j-1}(\mbox{lk}_K(x)) \to \cdots. 
\end{equation}

As an application of \eqref{star-link} we have the following.

\begin{prop} \label{dLeray}
Let $G$ be a graph with  $\gamma(G) \geq 0$. For every $0\leq j\leq \gamma(G)$ there exists a connected cover $\FF$ in $G$ such that  $\tilde{H}_{j}(N(\FF)) \neq 0$.
\end{prop}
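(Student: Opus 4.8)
The plan is to show that the homology of the nerve of a connected cover witnessing dimension $\gamma(G)$ can be ``pushed down'' to every intermediate dimension, by repeatedly deleting vertices from the nerve and controlling what happens via the star--link exact sequence \eqref{star-link}. The key point is that the link of a vertex in the nerve of a connected cover is again (isomorphic to) the nerve of a connected cover in $G$: if $\FF = \{G_1,\dots,G_n\}$ is a connected cover and $x\in[n]$, then $\mathrm{lk}_{N(\FF)}(x)$ is the nerve of the family $\FF_x = \{\,G_i\cap G_x : i\neq x,\ \{i,x\}\in N(\FF)\,\}$, and one checks directly from Definition~\ref{def:connected cover} that $\FF_x$ is a connected cover in $G$ (each relevant intersection $G_\sigma\cap G_x$ equals $G_{\sigma\cup\{x\}}$, which is connected, and these are induced subgraphs of $G$). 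Similarly, $N(\FF) - x$ is the nerve of $\FF$ with the member $G_x$ removed, which is trivially still a connected cover in $G$.

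First I would fix, by the definition of $\gamma(G)$, a connected cover $\FF$ in $G$ with $\tilde H_{d}(N(\FF))\neq 0$ where $d = \gamma(G)$; this settles the case $j=d$. For $j<d$, I would argue by downward induction on $j$ (equivalently, induct on the number of vertices of a nerve realizing nonvanishing homology in dimension $j+1$). Suppose we have a connected cover $\FF$ in $G$ with $\tilde H_{j+1}(N(\FF))\neq 0$; we want to produce one with $\tilde H_{j}(\,\cdot\,)\neq 0$. Pick any vertex $x$ of $N(\FF)$ and look at the relevant piece of \eqref{star-link}:
\[
\tilde H_{j+1}(N(\FF)-x)\ \to\ \tilde H_{j+1}(N(\FF))\ \to\ \tilde H_{j}(\mathrm{lk}_{N(\FF)}(x)).
\]
If the right-hand map is nonzero, then $\tilde H_{j}(\mathrm{lk}_{N(\FF)}(x))\neq 0$, and since $\mathrm{lk}_{N(\FF)}(x)\cong N(\FF_x)$ with $\FF_x$ a connected cover in $G$ (as above), we are done. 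Otherwise the map $\tilde H_{j+1}(N(\FF)-x)\to \tilde H_{j+1}(N(\FF))$ is surjective, hence $\tilde H_{j+1}(N(\FF)-x)\neq 0$, and $N(\FF)-x\cong N(\FF\setminus\{G_x\})$ is the nerve of a connected cover in $G$ with strictly fewer vertices. We then repeat with this smaller cover. This process must terminate: a nerve on one vertex is a point and has vanishing reduced homology in all dimensions, so we cannot keep deleting vertices forever while maintaining $\tilde H_{j+1}\neq 0$. Therefore at some stage the first alternative must occur, producing the desired connected cover with nonvanishing $j$-th homology.

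The main thing to get right — the only real obstacle — is the bookkeeping that $\mathrm{lk}_{N(\FF)}(x)$ and $N(\FF)-x$ are genuinely nerves of connected covers \emph{in $G$} (not merely in some subgraph), which is where the induced-subgraph clause and the identity $G_\sigma\cap G_x = G_{\sigma\cup\{x\}}$ are used; once that is in place, the rest is a routine dimension-counting exramination of the long exact sequence together with a termination argument. One minor point to state carefully is the base of the termination: any connected cover whose nerve has at most one vertex has $\tilde H_i = 0$ for all $i\geq 0$, so in particular for $i = j+1\geq 1$.
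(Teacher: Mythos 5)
Your proposal is correct and is essentially the paper's own argument: both identify $\mathrm{lk}_{N(\FF)}(x)$ with $N(\FF_x)$ for $\FF_x = \{G_i\cap G_x\}$ and $N(\FF)-x$ with $N(\FF\setminus\{G_x\})$, and both run the star--link exact sequence \eqref{star-link} to push nonvanishing homology down one dimension. The only (cosmetic) difference is that the paper passes to a minimal subfamily up front, which forces $\tilde H_j(K-x)=0$ immediately, whereas you delete vertices one at a time and invoke termination --- the two formulations are equivalent.
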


\begin{proof} For $\gamma(G) = 0$ there is nothing to show, so let $\FF$ be a connected cover in $G$ and suppose $\tilde{H}_j(N(\FF))\neq 0$ for some $0<j\leq \gamma(G)$. We will construct a connected cover $\FF_x$ in $G$ such that $\tilde{H}_{j-1}(N(\FF_x))\neq 0$.

By deleting members from $\FF$ (if necessary) we may assume that $\FF$ is {\em minimal} in the sense that  $\tilde{H}_j(N(\FF')) = 0$ for every proper subfamily $\FF' \subset \FF$. Let $K = N(\FF)$ and fix a vertex  $x$ in $K$ corresponding to the member $G_x\in \FF$. Now define the family
\[\FF_x  = \{G_i\cap G_x: G_i\in\mathcal{F} \setminus\{G_x\}, G_i\cap G_x \neq \emptyset\},\]
and note that $\FF_x$ is a connected cover in $G_x$ and therefore also a connected cover in $G$. We observe that
\[K-x = N(\mathcal{F} \setminus \{G_x\})  \; \text{ and } \;  \mbox{lk}_K(x) = N(\FF_x),\]
and that $\tilde{H_j}(K-x) = 0$ by the minimality of $\FF$. Therefore the exact sequence \eqref{star-link} implies 
that there is an injection from $\tilde{H}_j(K)$ to $\tilde{H}_{j-1}(\mbox{lk}_K(x))$, and so $\tilde{H}_{j-1}(N(\FF_x))\neq 0$.
\end{proof}

By applying the exact sequence \eqref{star-link} as in the proof of Proposition \ref{dLeray}, one can prove the following. (We leave the details to the reader.)

\begin{cor} \label{leray-cor}
Let $\mathcal{F}$ be a connected cover in $G$ and suppose $\tilde{H}_j(N(\FF))\neq 0$ for some $j\geq 0$. If $\mathcal{F}$ is minimal in the sense that $\tilde{H}_{j}(N(\FF ')) = 0$ for every proper subfamily $\FF' \subset \FF$, then $\gamma(G_x) \geq j-1$ for every member $G_x\in \FF$.
\end{cor}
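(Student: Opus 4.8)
The plan is to mimic, essentially verbatim, the argument already carried out in the proof of Proposition~\ref{dLeray}, but now tracking what happens for \emph{every} vertex of $N(\FF)$ rather than just one. So fix a connected cover $\FF$ in $G$ with $\tilde H_j(N(\FF))\neq 0$ that is minimal in the stated sense, and let $K = N(\FF)$. Pick an arbitrary member $G_x\in\FF$ and let $x$ be the corresponding vertex of $K$. Exactly as before, set
\[
\FF_x = \{\, G_i\cap G_x : G_i\in\FF\setminus\{G_x\},\ G_i\cap G_x\neq\emptyset \,\},
\]
which is a connected cover in $G_x$ (the pairwise intersections needed are of the form $G_x\cap\bigcap_{i\in\sigma}G_i = \bigcap_{i\in\sigma\cup\{x\}}G_i$, connected because $\sigma\cup\{x\}\in N(\FF)$), and observe the two identifications $K-x = N(\FF\setminus\{G_x\})$ and $\operatorname{lk}_K(x) = N(\FF_x)$.

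The key step is to feed this into the Mayer--Vietoris sequence \eqref{star-link} for the vertex $x$. Minimality of $\FF$ gives $\tilde H_j(K-x) = \tilde H_j(N(\FF\setminus\{G_x\})) = 0$, so the portion
\[
\tilde H_j(K-x)\ \longrightarrow\ \tilde H_j(K)\ \longrightarrow\ \tilde H_{j-1}(\operatorname{lk}_K(x))
\]
of \eqref{star-link} shows that the connecting homomorphism $\tilde H_j(K)\to\tilde H_{j-1}(\operatorname{lk}_K(x))$ is injective. Since $\tilde H_j(K)\neq 0$ by hypothesis, we conclude $\tilde H_{j-1}(\operatorname{lk}_K(x)) = \tilde H_{j-1}(N(\FF_x))\neq 0$. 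As $\FF_x$ is a connected cover in $G_x$ realizing non-vanishing homology in dimension $j-1$, the definition of $\gamma$ yields $\gamma(G_x)\geq j-1$. Because $G_x$ was an arbitrary member of $\FF$, this holds for every member, which is the claim.

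One small point worth spelling out, since $x$ ranges over all vertices: the star $\operatorname{st}_K(x)$ is always contractible (it is a cone with apex $x$), so \eqref{star-link} is valid regardless of which vertex we chose, and the argument above is uniform in $x$. I do not expect a real obstacle here; the only thing to be careful about is the degenerate possibility $j = 0$, in which case the conclusion reads $\gamma(G_x)\geq -1$, which holds trivially for any graph (and is the reason $\gamma(K_1)$ was set to $-1$), so no separate treatment of $j=0$ is needed. Everything else is a direct transcription of the bookkeeping in Proposition~\ref{dLeray}, now applied at each vertex.
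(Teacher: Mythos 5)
Your argument is correct and is exactly the proof the paper intends: the authors explicitly say the corollary follows "by applying the exact sequence \eqref{star-link} as in the proof of Proposition \ref{dLeray}," and you carry out precisely that argument at an arbitrary vertex $x$, including the harmless $j=0$ edge case.
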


In some situations we can apply Corollary \ref{leray-cor} to determine the homological dimension of a given graph. We will give several examples throughout this section, starting here with the complete graphs. 

\begin{example} \label{l-complete}
Let $K_n$ denote the complete graph on $n$ vertices. We then have
\[\gamma(K_n) = n-2.\]
For $n=1$ this is by definition. For $n\geq 2$, the family of all induced subgraphs on $n-1$ vertices is a connected cover of $K_n$ and its nerve is the boundary of the $(n-1)$-dimensional simplex, and so $\gamma(K_n) \geq n-2$. 

The upper bound follows by induction on $n$. Consider a minimal size connected cover $\FF$ in $K_n$ realizing $\gamma(K_n)$, in the sense that $\tilde{H}_d(N(\FF))\neq 0$ for $d = \gamma(K_n)$.
Then every member of $\FF$ must be a proper induced subgraph of $K_n$ (or else $N(\FF)$ is contractible). By induction we have  $\gamma(G_x)\leq n-3$ for every member $G_x\in \FF$, and so $d\leq n-2$ by Corollary \ref{leray-cor}. 
\end{example}

\smallskip

\subsection{Clique-sums}
Let $G$ be a graph on the vertex set $V$ and let $V_1$ and $V_2$ be subsets of $V$ such that $V_1\cup V_2 = V$ and $V_1\cap V_2 \neq\emptyset$. If the induced subgraph $G[V_1\cap V_2]$ is a clique (complete subgraph) and there is no edge in $G$ connecting a vertex in $V_1\setminus V_2$ to a vertex in $V_2\setminus V_1$, then $G$ is called a {\bf{\em clique-sum}} of the graphs $G[V_1]$ and $G[V_2]$.\footnote{Some sources allow for the deletion of edges from the clique in the clique-sum operation, but we do not admit this in our definition. Also we include the condition $V_1\cap V_2\neq\emptyset$ since we are dealing only with connected graphs.}

\begin{thm} \label{l-sum}
If $G$ is a clique-sum of graphs $A$ and $B$, then $\gamma(G) = \max \{\gamma(A), \gamma(B)\}$.
\end{thm}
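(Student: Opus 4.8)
The inequality $\gamma(G)\ge\max\{\gamma(A),\gamma(B)\}$ is immediate: $A=G[V_1]$ and $B=G[V_2]$ are induced subgraphs, hence minors, of $G$, so it follows from Theorem~\ref{l-minor}. For the reverse inequality I plan to use a Mayer--Vietoris argument with one extra twist. Write $R=V_1\cap V_2$ and let $C=G[R]$ be the clique along which the sum is taken, say $C=K_m$. Since $C$ is an induced subgraph of $A$, Theorem~\ref{l-minor} and Example~\ref{l-complete} give $d:=\max\{\gamma(A),\gamma(B)\}\ge\gamma(C)=m-2$. Fixing a connected cover $\mathcal F=\{G_1,\dots,G_n\}$ of $G$, the goal is to show $\tilde H_j(N(\mathcal F))=0$ for every $j>d$; throughout, for $\sigma\subseteq[n]$ write $G_\sigma=\bigcap_{i\in\sigma}G_i$ and $W_\sigma=V(G_\sigma)$.

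The first step is to establish two facts about restrictions, valid for an arbitrary connected cover $\mathcal G$ of $G$. For $S\in\{V_1,V_2,R\}$ let $\mathcal G|_S$ be the family obtained by replacing each member of $\mathcal G$ with the subgraph it induces on its vertices lying in $S$ (and dropping empty ones). I claim that (a) $\mathcal G|_S$ is a connected cover of $G[S]$, and that (b) $N(\mathcal G)=N(\mathcal G|_{V_1})\cup N(\mathcal G|_{V_2})$ while $N(\mathcal G|_{V_1})\cap N(\mathcal G|_{V_2})=N(\mathcal G|_R)$. Both rest on the defining feature of a clique-sum: $G[R]$ is complete and there is no edge between $V_1\setminus R$ and $V_2\setminus R$. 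For (a) with $S=R$ the claim is trivial; for $S=V_1$, any path in the connected graph $G[W_\sigma]$ joining two vertices of $W_\sigma\cap V_1$ can be rerouted to stay inside $V_1$ by replacing each maximal detour outside $V_1$ with a single edge of the clique $G[W_\sigma\cap R]$, so $G[W_\sigma\cap V_1]$ is connected; the case $S=V_2$ is symmetric. For (b), a subfamily of $\mathcal G$ has nonempty intersection iff the corresponding $W_\sigma$ meets $V_1\cup V_2$, which gives the union identity; and if $W_\sigma$ met both $V_1\setminus R$ and $V_2\setminus R$ but not $R$, then $G[W_\sigma]$, being nonempty, would be disconnected, contradicting that $\mathcal G$ is a connected cover, so the intersection identity follows.

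Now apply Mayer--Vietoris to $N(\mathcal F)=N(\mathcal F|_{V_1})\cup N(\mathcal F|_{V_2})$. For $j>d$ the groups $\tilde H_j(N(\mathcal F|_{V_1}))$ and $\tilde H_j(N(\mathcal F|_{V_2}))$ vanish by (a), so one gets an injection $\tilde H_j(N(\mathcal F))\hookrightarrow\tilde H_{j-1}(N(\mathcal F|_R))$. Since $\mathcal F|_R$ is a connected cover of $K_m$, its nerve has vanishing homology above dimension $m-2$, which settles the case $j\ge m$; but $d\ge m-2$ only forces $j\ge m-1$, so the single degree $j=m-1$ (where necessarily $\gamma(A)=\gamma(B)=m-2$) remains, and I expect this boundary case to be the main obstacle. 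To dispose of it I would pass to $\mathcal F'=\mathcal F\cup\{C\}$, which is still a connected cover of $G$ since intersections with the clique $C$ are complete. On one hand, $N(\mathcal F')$ is $N(\mathcal F)$ with a single new vertex $z$ adjoined (representing $C$), and since $C\cap G_\sigma=G[W_\sigma\cap R]$ the link of $z$ is exactly $N(\mathcal F|_R)$; so the exact sequence~\eqref{star-link} applied at $z$ shows that $\tilde H_j(N(\mathcal F))\to\tilde H_j(N(\mathcal F'))$ is injective whenever $j>\gamma(C)=m-2$, in particular for all $j>d$. On the other hand $\mathcal F'|_{V_1}=\mathcal F|_{V_1}\cup\{C\}$ and $\mathcal F'|_{V_2}=\mathcal F|_{V_2}\cup\{C\}$ are again connected covers of $A$ and $B$, while in $N(\mathcal F'|_R)=N(\mathcal F|_R)\cup\{C\}$ the vertex $z$ is a cone point (because $C$ contains every vertex of $R$, adding $z$ to any face keeps it a face), so $N(\mathcal F'|_R)$ is contractible; Mayer--Vietoris applied to $N(\mathcal F')=N(\mathcal F'|_{V_1})\cup N(\mathcal F'|_{V_2})$, using (b) for $\mathcal F'$, then gives $\tilde H_j(N(\mathcal F'))=0$ for all $j>d$. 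Combining the two, $\tilde H_j(N(\mathcal F))=0$ for every $j>d$, which establishes the reverse inequality and completes the proof.
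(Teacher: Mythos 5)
Your proof is correct and follows essentially the same route as the paper: restrict the cover to the two sides of the clique-sum, apply Mayer--Vietoris, and in the borderline case adjoin the clique itself as an extra member so that the star--link sequence injects $\tilde H_j(N(\mathcal F))$ into $\tilde H_j(N(\mathcal F'))$ while the intersection of the two subnerves of $N(\mathcal F')$ becomes a cone. The only cosmetic difference is that you control the homology of the intersection complex by identifying it as $N(\mathcal F|_R)$, the nerve of a connected cover of $K_m$, and invoking $\gamma(K_m)=m-2$, whereas the paper runs a short case analysis directly on the Mayer--Vietoris sequence.
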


\begin{proof}
The statement is clearly true in the case when $G$ is a tree. So assume $\gamma(G) = d \geq 1$. We will show that there exists connected covers $\mathcal{F}_A$ in $A$ and $\mathcal{F}_B$ in $B$ such that $\tilde{H}_d(N(\mathcal{F}_A)) \oplus \tilde{H}_d(N(\mathcal{F}_B))\neq 0$. This implies that $\gamma(G) \leq \max \{\gamma(A), \gamma(B)\}$, which completes the proof since $\gamma(G) \geq \max \{\gamma(A), \gamma(B)\}$  by Theorem~\ref{l-minor}.

\medskip

% COMMENT 3: In the setting of the proof of Thm 2.5, maybe the fact that F is a connected cover of induced subgraphs could already appear in the first sentence.
% ANSWER: in Definition 1.1, a connected cover always concerns induced subgraphs. But we will recall the fact that each member of a connected cover is an induced subgraph.

Let $\FF$ be a connected cover in $G$ such that $\tilde{H}_d(N(\FF))\neq 0$.
% CHANGE 3.
Recall that each member in $\FF$ is an induced subgraph of $G$.
Consider families $\mathcal{F}_A$ and $\mathcal{F}_B$ of induced subgraphs of $G$ defined as 
\[\mathcal{F}_A = \{G_i \cap A : G_i\in \mathcal{F},
G_i\cap A\neq\emptyset\} \; \text{ and } \; \mathcal{F}_B = \{G_i \cap B : G_i\in \mathcal{F}, G_i\cap B\neq\emptyset\}.\]
We claim that $\mathcal{F}_A$ and $\mathcal{F}_B$ are connected covers in $A$ and $B$, respectively.
To see this, consider a pair of vertices $u$ and $v$ in $G_\sigma \cap A$. Since $\mathcal{F}$ is a connected cover, there exists a $uv$-path $\pi$ in $G_\sigma$. If $\pi$ is contained in $A$ we are done. Otherwise, let $x$ be the first vertex of $\pi$ contained in $B$ (as we traverse $\pi$ from $u$ to $v$) and let $y$ be the last vertex of $\pi$ contained in $B$. Clearly $x$ and $y$ are contained in $A\cap B$ which is a clique. Therefore $x$ and $y$ are adjacent and we can reroute the path $\pi$ to find a $uv$-path in $G_\sigma \cap A$. The same argument shows that $\mathcal{F}_B$ is a connected cover in $B$. 

\medskip

Let $K = N(\mathcal{F})$, $K_A = N(\mathcal{F}_A)$, and $K_B = N(\mathcal{F}_B)$. Since there are obvious inclusions from $\mathcal{F}_A$ and $\mathcal{F}_B$ into $\mathcal{F}$ we may regard $K_A$ and $K_B$ as subcomplexes of $K$.  Clearly we have $K = K_A \cup K_B$, so by applying the Mayer-Vietoris exact sequence with the pair $K_A$ and $K_B$ we obtain
\begin{equation}\label{exact111}
0 \to \tilde{H}_{d}(K_A\cap K_B) \to \tilde{H}_d(K_A) \oplus \tilde{H}_d(K_B) \to \tilde{H}_{d}(K) \to \tilde{H}_{d-1}(K_A\cap K_B)  \to \cdots.
\end{equation}
Note that if $\tilde{H}_d(K_A\cap K_B) \neq 0$, then \eqref{exact111} implies that  
 $\tilde{H}_d(K_A) \oplus \tilde{H}_d(K_B) \neq 0$, and we are done. So we may assume $\tilde{H}_d(K_A\cap K_B) = 0$. Observe also that if
$\tilde{H}_{d-1}(K_A\cap K_B) = 0$, then \eqref{exact111} implies that 
 $\tilde{H}_d(K_A) \oplus \tilde{H}_d(K_B)$ surjects onto $\tilde{H}_d(K)$, and again we are done. So we may assume $\tilde{H}_{d-1}(K_A \cap K_B) \neq 0$.

\medskip

% Comment 4: When you say that K_A \cap K_B would be contractible, a small bit of explanation would be helpful.
% Change 4
% SH's change : the order of sentences are changed.
Let $G_x$ be the clique $A\cap B$.
Note that if a vertex $i$ appears both in $K_A$ and $K_B$,
then the corresponding graph $G_i \in \FF$ must contain a vertex of $G_x$.
It therefore follows that $G_x\notin \FF$, or else $K_A\cap K_B$ would be a cone with apex $x$, which is contractible,
 contradicting $\tilde{H}_{d-1}(K_A\cap K_B) \neq 0$.
% Comment 5: Before denition of F0, or immediately after it, I would nd it helpful to say what is the target first. (That is, that you aim to add Gx to F and repeat the reasoning.)
% Change 5
Now we will find new connected covers $\mathcal{F}'_A$ and $\mathcal{F}'_B$ in $A$ and $B$, respectively, such that $\tilde{H}_d(N(\mathcal{F}'_A)) \oplus \tilde{H}_d(N(\mathcal{F}'_B))\neq 0$.
Consider the connected cover $\mathcal{F}'$ in $G$ defined as 
\[\mathcal{F}' = \mathcal{F} \cup \{G_x\}.\]
% natural inclusion -> obvious inclusion
Let $K' = N(\mathcal{F}')$ and note that there is an obvious inclusion $K\subset K'$. If we let $x$ denote the vertex in $K'$ which represents $G_x$, we then have 
\[K' = K \cup \mbox{st}_{K'}(x) \; \text{ and } \; \mbox{lk}_{K'}(x) = K \cap \mbox{st}_{K'}(x) = K_A \cap K_B.\] 
Applying the exact sequence \eqref{star-link} with the pair $\mbox{st}_{K'}(x)$ and $K'-x = K$, it follows from the assumption  $\tilde{H}_d(K_A\cap K_B)=0$ 
that there is an injection from $\tilde{H}_d(K)$ to $\tilde{H}_d(K')$. So we have  $\tilde{H}_d(K') \neq 0$.

As above, by restricting $\mathcal{F}'$ to $A$ and $B$, we get connected covers $\mathcal{F}'_A$ and $\mathcal{F}'_B$ in $A$ and $B$, respectively, defined as
\[\mathcal{F}'_A = \mathcal{F}_A \cup \{G_x\} \; \text{ and } \; \mathcal{F}'_B = \mathcal{F}_B \cup \{G_x\}.\]
Let $K'_A = N(\mathcal{F}'_A)$ and $K'_B = N(\mathcal{F}'_B)$ which can be regarded as subcomplexes of $K'$. We then have 
\[K' = K'_A \cup K'_B \; \text{ and } \; \mbox{st}_{K'}(x) = K'_A \cap K'_B,\] 
which implies that $K'_A \cap K'_B$ is contractible.
Applying the exact sequence \eqref{exact111} with the pair $K'_A$ and $K'_B$ then implies that $\tilde{H}_d(K')$ and $\tilde{H}_d(K'_A) \oplus \tilde{H}_d(K'_B)$ are isomorphic.
\end{proof}

We can now prove part \eqref{k4minor} of Theorem \ref{main-thm}.

\begin{cor} \label{part 2 main}
Let $G$ be a graph with $\gamma(G)\geq 2$. Then $K_4\prec G$. 
\end{cor}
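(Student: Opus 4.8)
The goal is to prove that $\gamma(G) \geq 2$ implies $K_4 \prec G$. I would argue the contrapositive: if $K_4 \not\prec G$, then $\gamma(G) \leq 1$. The key input is Wagner's structure theorem for $K_4$-minor-free graphs (mentioned in the excerpt as being from Wagner, see also \cite{diestel}): every graph with no $K_4$ minor can be built from triangles and edges by repeatedly taking clique-sums over cliques of size at most $2$. Equivalently, every $2$-connected $K_4$-minor-free graph is a series-parallel graph, and these are constructed by clique-sums along single edges (and vertices) starting from $K_1$, $K_2$, and $K_3$.

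**Main steps.** First I would recall (or cite) the decomposition: if $G$ is connected and $K_4 \not\prec G$, then either $G$ is a single vertex or a single edge or a triangle, or $G$ is a clique-sum of two strictly smaller connected $K_4$-minor-free graphs $A$ and $B$ (the summation clique having size $1$ or $2$). Second, I would set up an induction on the number of vertices of $G$. In the base cases, $G \in \{K_1, K_2, K_3\}$, and by Example~\ref{l-complete} we have $\gamma(K_1) = -1$, $\gamma(K_2) = 0$, $\gamma(K_3) = 1$, all $\leq 1$. Third, for the inductive step, write $G$ as a clique-sum of $A$ and $B$, both proper induced subgraphs; each is $K_4$-minor-free (a minor of $G$), hence by induction $\gamma(A) \leq 1$ and $\gamma(B) \leq 1$. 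Finally, apply Theorem~\ref{l-sum} to conclude $\gamma(G) = \max\{\gamma(A), \gamma(B)\} \leq 1$, and hence $\gamma(G) \not\geq 2$, which is the contrapositive of what we want.

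**Anticipated obstacle.** The only real subtlety is making sure the clique-sum decomposition is applied legitimately — in particular that $A$ and $B$ are genuinely \emph{proper} subgraphs so the induction terminates, and that the clique-sum along a vertex (size-$1$ clique) or an edge (size-$2$ clique, which is indeed a clique) fits the definition of clique-sum used in the excerpt (it does: a single vertex or a single edge is a clique, and the no-cross-edge condition is exactly the cut-vertex/separating-edge condition). One should also handle the degenerate case where $G$ is $2$-connected but not series-parallel — but that cannot happen since $2$-connected graphs with no $K_4$ minor are precisely the series-parallel ones, which are themselves clique-sums along single edges of triangles. So provided Wagner's theorem is quoted in the exact form "$K_4$-minor-free $\Leftrightarrow$ iterated clique-sum of graphs on at most $3$ vertices over cliques of size $\leq 2$," the proof is a short induction and there is no technical difficulty; the heavy lifting was already done in Theorem~\ref{l-sum}.

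\begin{proof}
We prove the contrapositive: if $K_4\not\prec G$, then $\gamma(G)\leq 1$. As explained in the remarks above, we may assume $G$ is connected. By Wagner's structure theorem for $K_4$-minor-free graphs \cite{wagner} (see also \cite[Chapter 7.3]{diestel}), every connected graph with no $K_4$ minor is either a graph on at most three vertices, or a clique-sum of two connected $K_4$-minor-free graphs along a clique of size at most two.

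We argue by induction on $|V(G)|$. If $|V(G)|\leq 3$, then $G$ is an induced subgraph of $K_3$, so $G\in\{K_1,K_2,K_3\}$, and by Example \ref{l-complete} we have $\gamma(G)\leq \gamma(K_3) = 1$.

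Now suppose $|V(G)|\geq 4$. Then $G$ is a clique-sum of connected graphs $A$ and $B$, where both are proper induced subgraphs of $G$ and the summation clique has size one or two (in particular it is a clique). Since $A$ and $B$ are minors of $G$, neither contains $K_4$ as a minor, so by the induction hypothesis $\gamma(A)\leq 1$ and $\gamma(B)\leq 1$. By Theorem \ref{l-sum},
\[\gamma(G) = \max\{\gamma(A),\gamma(B)\}\leq 1.\]
This completes the induction, and hence the proof.
\end{proof}
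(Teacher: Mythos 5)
Your overall strategy---Wagner's structure theorem combined with Example \ref{l-complete} and Theorem \ref{l-sum}---is the same as the paper's, but the structural claim your induction rests on is false as stated, and this is a genuine gap. You assert that every connected $K_4$-minor-free graph on at least four vertices is a clique-sum (in the paper's sense, where the overlap $V_1\cap V_2$ must induce a clique and no edges are deleted) of two smaller connected $K_4$-minor-free graphs along a clique of size at most two. The $4$-cycle $C_4$ is a counterexample: it is $2$-connected, so there is no cut vertex to sum over; its only $2$-separators are the two pairs of opposite vertices, which are non-adjacent and hence do not induce cliques; and it has no triangle. So $C_4$ admits no nontrivial clique-sum decomposition at all, and your induction gets stuck. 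The theorem you are invoking (Diestel, Theorem 7.3.1) concerns \emph{edge-maximal} $K_4$-minor-free graphs: those are exactly the graphs obtained by pasting triangles along edges, and a general $K_4$-minor-free graph is only a \emph{subgraph} of such a graph. Your ``anticipated obstacle'' paragraph dismisses precisely this issue with the incorrect assertion that every $2$-connected series-parallel graph is itself a clique-sum of triangles.

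The repair is short and is exactly what the paper does: extend $G$ to an edge-maximal $K_4$-minor-free graph $G'$ on the same vertex set, so that $G'$ is an iterated clique-sum of copies of $K_3$ (handling graphs on at most two vertices trivially); then $\gamma(G')\leq 1$ by Example \ref{l-complete} and Theorem \ref{l-sum}, and since $G$ is a subgraph, hence a minor, of $G'$, Theorem \ref{l-minor} gives $\gamma(G)\leq\gamma(G')\leq 1$. With that one extra step your argument closes.
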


\begin{proof}
Suppose $G$ does not have a $K_4$-minor. In this case, it is well-known that $G$ is a subgraph of a graph obtained by repeated clique-sums of $K_3$ (see e.g. \cite[Theorem 7.3.1]{diestel}). By Example \ref{l-complete} we have $\gamma(K_3) = 1$, and so Theorem \ref{l-sum} implies that $\gamma(G)\leq 1$.
\end{proof}

\begin{example}\label{compl-mult}
Let $t_1\geq \cdots \geq t_r$ be positive integers and let $K_{t_1, \dots, t_r}$ denote the complete multipartite graph on vertex classes $V_1, \dots, V_r$ where $|V_i|=t_i$. Denote by $m_{t_1, \dots, t_r}$ the order of the largest complete minor in $K_{t_1, \dots, t_r}$, that is, 
\[m_{t_1, \dots, t_r}  = \max\{ s: K_s \prec K_{t_1, \dots, t_r}\}. \]  
Here we show that for the complete multipartite graph $K_{t_1, \dots, t_r}$ we have
\[\gamma(K_{t_1, \dots, t_r}) = m_{t_1, \dots, t_r} - 2.\]

Since $\gamma(K_n) = n-2$, it follows from  Theorem \ref{l-minor} that $\gamma(K_{t_1, \dots, t_r})\geq m_{t_1, \dots, t_r} - 2$. For the upper bound we proceed by induction on $t = t_2 + \cdots + t_r$. The base case of the induction is when $t_i = 1$ for all $i\geq 2$. In this case $K_{t_1, \dots, t_r}$ is a clique-sum of $K_r$'s, so by Theorem \ref{l-sum} we have $\gamma(K_{t_1, \dots, t_r}) = r-2 = m_{t_1, \dots, t_r}-2$. 

Now suppose $t_2> 1$ and let $\FF$ be a connected cover in $K_{t_1, \dots, t_r}$ of minimal size which realizes $\gamma(K_{t_1, \dots, t_r})$. We claim there is a pair of adjancent vertices $u,v \in K_{t_1, \dots, t_r}$ and a member $G_x\in \FF$ such that $G_x\subset K_{t_1, \dots, t_r} -  \{u,v\}$. To see this, first note that for every vertex $v$ there is a member of $\FF$ which misses $v$ (or else $N(\FF)$ is contractible). Now, if every member in $\FF$ which misses a vertex in $V_2\cup \cdots \cup V_r$ only misses non-adjacent vertices, then the intersection of all such members is the vertex class $V_1$, which is disconnected. This establishes the claim, and it follows that there exists a subgraph $K_{s_1, \dots, s_q}$, obtained from $K_{t_1, \dots, t_r}$ by deleting a pair of adjacent vertices, which contains a member $G_x \in \FF$. So by  Corollary \ref{leray-cor} and Theorem \ref{l-minor} we have \[\gamma(K_{t_1, \dots, t_r}) \leq \gamma(G_x) + 1 \leq \gamma(K_{s_1, \dots, s_q})+1.\]

The final observation (which we leave to the reader) is that
%Finally, observe that 
if $K_{s_1, \dots, s_q}$ is obtained from $K_{t_1, \dots, t_r}$ by deleting a pair of adjacent vertices, then \[s_2+\cdots + s_q < t_2+\cdots + t_r \;\; \mbox{ and } \;\;  m_{s_1, \dots, s_q} < m_{t_1, \dots, t_r}.\]
Therefore, by induction we have $\gamma(K_{t_1, \dots, t_r}) \leq m_{s_1, \dots, s_q} - 1 \leq m_{t_1, \dots, t_r}-2$. 
\end{example}

\begin{example} \label{w8}
The Wagner graph $W_8$ is the non-planar graph obtained from the cycle on 8 vertices by joining each antipodal pair of vertices by an edge. (See Figure \ref{fig:wagner}.)  Here we show that 
\[\gamma(W_8) = 2.\]

\begin{figure}[ht]
\centering
\begin{tikzpicture}[scale = .8, rotate=1*45/2+90]
\begin{scope}[xshift = 5cm]
\coordinate (V1) at (1*360/8+0:2cm);
\coordinate (V2) at (2*360/8-0:2cm);
\coordinate (V3) at (3*360/8+0:2cm);
\coordinate (V4) at (4*360/8-0:2cm);
\coordinate (V5) at (5*360/8+0:2cm);
\coordinate (V6) at (6*360/8-0:2cm);
\coordinate (V7) at (7*360/8+0:2cm);
\coordinate (V8) at (8*360/8-0:2cm);

\draw (V1)--(V2)--(V3)--(V4)--(V5)--(V6)--(V7)--(V8)--cycle;
\draw 
(V1) .. controls (2*360/8:.3cm) and (4*360/8:.3cm).. (V5) 
(V2) .. controls (3*360/8:.3cm) and (5*360/8:.3cm).. (V6) 
(V3) .. controls (4*360/8:-.3cm) and (6*360/8:-.3cm).. (V7) 
(V4) .. controls (5*360/8:.3cm) and (7*360/8:.3cm).. (V8);
\fill (V1) circle (2.3pt);
\fill (V2) circle (2.3pt);
\fill (V3) circle (2.3pt);
\fill (V4) circle (2.3pt);
\fill (V5) circle (2.3pt);
\fill (V6) circle (2.3pt);
\fill (V7) circle (2.3pt);
\fill (V8) circle (2.3pt);
\end{scope}
\end{tikzpicture}
\caption{The Wagner graph $W_8$. \label{fig:wagner}}
\end{figure}
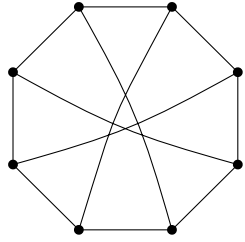

Since $W_8$ contains $K_4$ as a minor, we have  $\gamma(W_8)\geq 2$. To show the reverse inequality, observe that the only proper induced subgraphs of $W_8$ which contain $K_4$ as a minor are those on 7 vertices. If $\gamma(W_8)>2$, then Proposition~\ref{dLeray} implies that there exists a connected cover $\mathcal{F}$ in $W_8$ such that $\tilde{H}_3(N(\FF))\neq 0$. We may assume $\FF$ is minimal with this property, so by Corollary~\ref{leray-cor} we have $\gamma(G_i)\geq 2$ for every $G_i \in \mathcal{F}$. Therefore Corollary \ref{part 2 main} implies that every member of $\FF$ has a $K_4$ minor, which means that every member of $\FF$ is an induced subgraph on 7 vertices. But then the intersection of any 5 members in $\mathcal{F}$ is non-empty, and so the $4$-skeleton of $N(\mathcal{F})$ is complete, implying that $\tilde{H}_3 (N(\mathcal{F})) = 0$. Therefore $\gamma(W_8) =2$.
\end{example}

\begin{remark} \label{equivalent conjecture}
We can give a reformulation of Conjecture \ref{minor-nerve} in terms forbidden minors as follows. Given a positive integer $d$, let $\Gamma_d$ be the set of graphs defined as \[\Gamma_d = \{G: \gamma(G) < d\}.\] Theorem~\ref{l-minor} implies that $\Gamma_d$ is minor-closed, meaning that if $G\in \Gamma_d$ and $H\prec G$, then $H\in \Gamma_d$. The celebrated graph minor theorem of %It follows from the 
Robertson and Seymour %graph minor theorem 
\cite{graph minor theorem} implies that  $\Gamma_d$ is characterized by a finite list of forbidden minors $F_d$, and Conjecture~\ref{minor-nerve} is equivalent to the statement $F_d = \{K_{d+2}\}$ for all $d\geq 1$.
\end{remark}

\subsection{Proof of Theorem \ref{main-thm}} \label{sec 2.1} Part \eqref{k3minor} follows from Borsuk's nerve theorem and the argument was given in Section \ref{intros} (immediately after the statement of Theorem \ref{main-thm}), while part \eqref{k4minor} was established in Corollary \ref{part 2 main}. For part \eqref{k5minor} we need the following crucial result which we prove in the next section.

\begin{thm} \label{planar}
For any planar graph $G$ we have $\gamma(G)\leq 2$.
\end{thm}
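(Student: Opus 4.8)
The plan is to show that if $G$ is planar then every connected cover $\FF$ in $G$ has $\tilde{H}_j(N(\FF)) = 0$ for all $j \geq 3$. By Proposition~\ref{dLeray}, it suffices to rule out $\tilde{H}_3(N(\FF)) \neq 0$, and by passing to a minimal such cover (as in Corollary~\ref{leray-cor}) we may assume $\FF$ is minimal with this property. The first reductions I would make: by the remark on connected graphs we may assume $G$ is connected, and by Corollary~\ref{leray-cor} every member $G_x \in \FF$ satisfies $\gamma(G_x) \geq 2$, hence by Corollary~\ref{part 2 main} every $G_x$ has a $K_4$-minor. The idea is then to exploit planarity: fix a plane embedding of $G$, and use the fact that the members $G_x$ are induced connected subgraphs whose pairwise and higher intersections are connected, to extract strong combinatorial constraints on how the vertex sets $V(G_x) \subseteq V(G)$ can sit inside the planar graph. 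Since intersections of members are connected and the nerve carries 3-dimensional homology, we need at least five members meeting in a ``complicated'' pattern, and the goal is to derive a $K_5$-minor in $G$ from this — contradicting planarity.

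The heart of the argument, and the main obstacle, is the combinatorial/topological bookkeeping that converts ``$N(\FF)$ has a nonzero 3-cycle'' into ``$G$ has a $K_5$-minor.'' I expect this to go through the combinatorial identity advertised in the introduction (Lemma~\ref{isoperm}, an ``isoperimetric''-type identity over $\mathbb{Z}_2$) applied to the boundaries of the 4-dimensional faces of the $\mathbb{Z}_2$-cycle. Concretely, I would pick a $\mathbb{Z}_2$-homology class $0 \neq [z] \in \tilde{H}_3(N(\FF))$, represented by a 3-cycle $z$ with minimal support, and study the subfamily of members indexed by the vertices appearing in $z$. Because the cycle is 3-dimensional and nontrivial, Alexander-type duality or the structure of $\mathbb{Z}_2$-cycles forces many 4-subsets of these members to have empty common intersection while all smaller subsets do not; each such ``missing'' 4-simplex, together with connectedness of all intersections appearing, should let me route disjoint connected pieces in $G$ realizing the five branch sets of a $K_5$-minor. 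The $\mathbb{Z}_2$-coefficients are essential here precisely because the parity identity of Lemma~\ref{isoperm} is what guarantees the required configuration of empty versus nonempty intersections exists.

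In more detail, here is the sequence of steps I would carry out. (1) Set-up and reductions: assume $G$ connected and planar, fix a plane embedding, take $\FF$ minimal with $\tilde{H}_3(N(\FF)) \neq 0$, and record that every member has a $K_4$-minor and that no member is all of $G$. (2) Normalize $\FF$: using clique-sum decompositions (Theorem~\ref{l-sum}) reduce to the case where $G$ is 3-connected, since $\gamma$ of a clique-sum is the max of the parts and a planar graph is built from 3-connected planar pieces by clique-sums over edges and triangles; this lets me use the fact that a 3-connected planar graph has an essentially unique embedding. (3) Translate homology to intersection data: from $0 \neq [z] \in \tilde{H}_3(N(\FF))$ and minimality, deduce via \eqref{star-link}-type arguments that for the relevant subfamily the nerve contains the boundary of a 4-simplex (or a subdivision thereof) as the ``reason'' for nonvanishing $H_3$, so there are five members $G_1,\dots,G_5$ with every four of them having connected common intersection but not all five. (4) The key lemma: invoke Lemma~\ref{isoperm} to control, over $\mathbb{Z}_2$, the alternating structure of these intersections inside the planar ambient graph. (5) Build the minor: using that each $G_i$ is connected, that the $G_{\sigma}$ for $\sigma$ of size $\leq 4$ are connected and nested appropriately, and planarity of $G$, contract the intersection patterns to produce five pairwise disjoint connected subgraphs of $G$ pairwise joined by edges, i.e.\ $K_5 \prec G$ — the desired contradiction. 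The step I expect to be genuinely hard is (5) combined with (4): carefully choosing the branch sets so that they are pairwise disjoint yet pairwise adjacent, while only using the connectivity hypotheses actually guaranteed by the nerve, is exactly where the planar embedding and the $\mathbb{Z}_2$ parity identity must be used in tandem, and where all the technical weight of the proof lies.
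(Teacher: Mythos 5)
Your step (3) contains the fatal gap: from $\tilde{H}_3(N(\FF))\neq 0$ you cannot conclude that the nerve contains the boundary of a $4$-simplex, i.e.\ five members $G_1,\dots,G_5$ with every four intersecting but not all five. A nontrivial $\mathbb{Z}_2$ $3$-cycle can be supported on an arbitrarily large triangulation of a homology $3$-sphere (or worse), and neither minimality of $\FF$ nor minimality of the support of $z$ collapses it to $\partial\Delta^4$. Indeed, if this reduction were valid, Theorem~\ref{hely} (planar graphs have Helly number $<5$, precisely because an induced $\partial\Delta^4$ in the nerve yields a $K_5$ minor) would finish the proof in two lines, and the entire technical apparatus of Section~\ref{planar graphs} would be unnecessary. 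The whole difficulty is exactly that the $3$-cycle has uncontrolled support, and your proposal does not engage with this. A secondary problem is step (2): the decomposition of a planar graph into $3$-connected pieces uses $2$-separations whose separators need not induce edges, so Theorem~\ref{l-sum} (which requires the separator to be a clique with no edge deletion) does not license a reduction to the $3$-connected case.

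For comparison, the paper's proof never extracts a $K_5$ minor from the homology at all. It thickens $G$ so that $G$ and all members are $2$-connected (Lemma~\ref{2connected}), takes a counterexample $\FF$ minimal in $|\FF|$, and runs an induction that repeatedly \emph{fills a face} of a fixed member $G_x$: if $\tilde{H}_3$ died after filling, there would be an $(x,L)$-critical $3$-cycle $\gamma=\beta+[x,\partial\beta]$. The $2$-simplices of $\partial\beta$ are assigned to vertices of the cycle $S$ bounding the filled face, ordered cyclically along $S$, and Lemma~\ref{isoperm} produces a $3$-chain $T$ with $\partial T=\partial\beta$; planarity enters only through the observation that two paths $\pi_{ab},\pi_{cd}$ with interleaved endpoints on $S$, routed outside the face, must cross at a vertex, which forces $[x,T]\in C_4(K)$. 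Minimality of $|\FF|$ then makes $\beta+T$ a boundary in $K-x$ and hence $\gamma$ a boundary in $K$, a contradiction; the process terminates with $G_x=G$ and a contractible (cone) nerve. Your intuition that Lemma~\ref{isoperm} and $\mathbb{Z}_2$ parity are where the weight lies is right, but the role you assign it (constraining empty versus nonempty $4$-wise intersections among five sets) is not how it is used, and step (5) as you describe it never has to be carried out.
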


To complete the proof of Theorem \ref{main-thm}, suppose $G$ does not contain $K_5$ as a minor. By a well-known theorem of Wagner \cite{wagner} (see also \cite[Theorem 7.3.4]{diestel}), then $G$ is a subgraph of a graph obtained by repeated clique-sums of planar graphs and the Wagner graph $W_8$. Since the homological dimensions of these graphs are at most 2 (Example \ref{w8} and Theorem \ref{planar}), Theorem \ref{l-sum} implies that $\gamma(G)\leq 2$.

\section{Connected covers in planar graphs} \label{planar graphs}
Here we give a proof of Theorem \ref{planar}. By Proposition \ref{dLeray} it suffices focus on the homology in dimension 3, and so our goal is 
to show the following.

\begin{thm} \label{3homol}
Let $\mathcal{F}$ be a connected cover in a planar graph $G$. Then $\tilde{H}_3(N(\mathcal{F})) = 0$.
\end{thm}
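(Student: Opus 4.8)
The plan is to prove Theorem~\ref{3homol} by contradiction, taking a counterexample that is minimal first in $|V(G)|$ and then in $|\FF|$. Fix a plane embedding of $G$, so that every intersection $G_\sigma$ with $\sigma\in N(\FF)$ is a connected plane graph. Minimality of $\FF$ together with Corollary~\ref{leray-cor} gives $\gamma(G_x)\ge 2$ for each member $G_x\in\FF$; no member can be all of $G$ (that would make $N(\FF)$ a cone, hence acyclic), so each $G_x$ is a proper induced subgraph of $G$, again planar, on fewer vertices, and therefore $\gamma(G_x)=2$ by minimality of $|V(G)|$ (using Proposition~\ref{dLeray} to produce a smaller counterexample if $\gamma(G_x)\ge 3$). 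In particular, by Corollary~\ref{part 2 main} every member has a $K_4$-minor and, being planar, no $K_5$-minor. This is the inductive setup I would want in place before attacking the homology.

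The first substantive step is a local Helly statement: \emph{if $G_{i_1},\dots,G_{i_5}\in\FF$ are such that all $\binom{5}{4}=5$ of their $4$-wise intersections are nonempty, then $G_{i_1}\cap\cdots\cap G_{i_5}\ne\emptyset$.} I would prove this by supposing the $5$-fold intersection is empty and building a $K_5$-minor of $G$. Set $A_m=\bigcap_{j\ne m}G_{i_j}$ and $B_{\{m,m'\}}=\bigcap_{j\notin\{m,m'\}}G_{i_j}$; since these are of the form $G_\sigma$ with $\sigma\in N(\FF)$, each is a nonempty \emph{connected} induced subgraph. The five $A_m$ are pairwise vertex-disjoint, and each $B_{\{m,m'\}}$ is disjoint from every $A_k$ with $k\notin\{m,m'\}$ (in both cases the relevant intersection equals the empty $5$-fold intersection), while $B_{\{m,m'\}}\supseteq A_m\cup A_{m'}$. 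Choosing in each $B_{\{m,m'\}}$ a path from $A_m$ to $A_{m'}$ and trimming it to touch the $A$'s only at its endpoints, one checks that two such trimmed paths meet at most at a single vertex lying in the branch set indexed by their unique common index; hence their interiors are pairwise disjoint and avoid all the $A_k$. Using the $A_m$ as branch sets and distributing each of the ten trimmed paths between the two branch sets it joins then yields a $K_5$-minor, contradicting planarity of $G$. This shows that $N(\FF)$ is \emph{$4$-determined}: a $5$-set of vertices belongs to $N(\FF)$ whenever all of its $4$-subsets do.

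The second step, which I expect to be the real obstacle, is to upgrade $4$-determinedness to $\tilde H_3(N(\FF))=0$. The local statement alone does not suffice: the boundary of the $4$-dimensional cross-polytope is $4$-determined yet is a $3$-sphere, so the plane embedding has to be used once more. The approach is to take a nonzero $3$-cycle $z\in Z_3(N(\FF);\mathbb{Z}_2)$---supported on $3$-faces $\tau=\{i,j,k,l\}$, each naming a nonempty connected plane graph $G_\tau$---and to construct an explicit $4$-chain $w$ with $\partial w=z$. Concretely, one wants to charge each $\tau$ in the support of $z$ to a carefully chosen set of $4$-faces $\tau\cup\{m\}$ (present in $N(\FF)$ exactly when the local step guarantees it), so that after summing over $\tau$ with the $\mathbb{Z}_2$-coefficients of $z$ every interior $3$-face cancels and only $z$ survives on the boundary. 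The bookkeeping that forces these cancellations is an Euler-characteristic/parity identity---valid precisely because we work over $\mathbb{Z}_2$---played out over the cyclic orders that the embedding imposes on the regions $G_\tau$ and their overlaps; this is the analogue of Lemma~\ref{isoperm}, and is where I would expect essentially all the difficulty to concentrate. If the direct filling turns out to be too delicate, an alternative is to strengthen the first step to show that $N(\FF)$ is $3$-collapsible---ordering the elementary collapses by means of the embedding together with the local Helly property---which again gives $\tilde H_3=0$ and keeps the entire argument inside the same planar/combinatorial core.
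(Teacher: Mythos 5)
Your first step is correct: the ``local Helly statement'' you prove (five members whose $4$-wise intersections are all nonempty must have a common vertex, else one builds a $K_5$-minor from the branch sets $A_m$ and trimmed connecting paths in the $B_{\{m,m'\}}$) is precisely Theorem~\ref{hely} of the paper specialized to $r=5$, proved by the same path-trimming argument. You are also right, and commendably honest, that this $4$-determinedness cannot by itself kill $\tilde H_3$ (the cross-polytope example is exactly the right one). But that honesty exposes the problem: everything after that point is a description of what a proof would have to accomplish, not a proof. The statement ``one wants to charge each $\tau$ to a carefully chosen set of $4$-faces so that every interior $3$-face cancels,'' with the cancellation governed by ``an Euler-characteristic/parity identity \dots played out over the cyclic orders that the embedding imposes,'' names the difficulty without resolving it; the alternative of proving $3$-collapsibility is likewise only gestured at. Since you yourself locate ``essentially all the difficulty'' in this step, the proposal has a genuine gap at its core.

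For comparison, the paper's argument is organized quite differently and does not route through Theorem~\ref{hely} at all. After making the cover $2$-connected (Lemma~\ref{2connected}), it fixes one member $G_x$ and repeatedly enlarges it by \emph{filling faces} of its embedding (Lemma~\ref{filling is connected}), terminating when $G_x=G$ and the nerve is a cone; the whole content is that a nonzero class in $\tilde H_3$ cannot die at any single filling step. A class that would die is represented by an $(x,L)$-critical cycle $\gamma=\beta+[x,\partial\beta]$ (Lemma~\ref{critical}) with $\beta$ supported over the vertices inside the filled face $f$. The $2$-simplices of $\partial\beta$ are then linearly ordered via assigned vertices on the cycle bounding $f$, producing the $3$-chain $T_{(\partial\beta,\prec)}$ with $\partial T=\partial\beta$ (Lemma~\ref{isoperm}), and planarity enters through a path-crossing argument: for interleaved $\tau_1\prec\tau_2\prec\tau_3\prec\tau_4$ the paths $\pi_{ab}\subset G_{\{a,b,x\}}$ and $\pi_{cd}\subset G_{\{c,d,x\}}$ must cross on a vertex of $G_\sigma\cap G_x$, so $[x,T]$ is a legitimate $4$-chain of $N(\FF)$. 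Note that this last fact --- the only place the embedding is used homologically --- is \emph{not} a consequence of your $4$-determinedness: it produces specific $4$-simplices $\sigma\cup\{x\}$ from the interleaving pattern of $\partial\beta$ along the face boundary, and the minimality of $|\FF|$ (applied to $K-x$) rather than a vertex-minimality is what lets one write $\beta=T+\varphi$ with $\varphi$ a boundary. If you want to complete your proposal, you would essentially have to reconstruct this machinery: the face-filling induction, the critical-cycle normal form, the identity $\partial T_{(\tau,\prec)}=\tau$, and the crossing argument.
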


Let us briefly describe the strategy of the proof. Without loss of generality we may assume $G = \bigcup_{G_i\in \FF} G_i$. The basic idea is to add vertices from $G$ to some fixed member $G_x\in \mathcal{F}$ in a systematic manner, while maintaining the property of being a connected cover. This will affect the nerve of $\mathcal{F}$, but we will show that if the 3-dimensional homology of $N(\mathcal{F})$ is non-zero, then it does not vanish during this process. On the other hand, the process can be carried out until $G_x = G$, in which case the nerve is contractible (it is a cone with apex $x$) and its reduced homology is zero in all dimensions. 

\subsection{Connectedness} \label{sec:connect}  It is convenient to assume that $G$ and the members of $\mathcal{F}$ are sufficiently connected. We say that a connected cover $\mathcal F$ in $G$ is {\bf {\em 2-connected}} if $G$ and every member of $\mathcal{F}$ are 2-connected. (This guarantees that faces in an embedding of $G$ are bounded by cycles in $G$.)
%The reason we require 2-connectedness is to guarantee that faces are bounded by circuits.\footnote{Throughout this section we use the term {\em circuit} when speaking of a simple cycle in a graph. This is mainly to avoid confusion with {\em simplicial cycles} which will come up later in the proof.}) 

\begin{lem} \label{2connected}
Let $\mathcal{F}$ be a connected cover in a connected planar graph $G$. Then there exists a $2$-connected cover ${\mathcal F}'$ in a planar graph $G'$ such that $G\prec G'$ and $N(\mathcal{F})\cong N(\mathcal{F}')$.
\end{lem}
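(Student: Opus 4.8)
The plan is to produce $G'$ from $G$ by a local expansion at every vertex together with a corresponding modification of the cover, carried out in two stages: first to make $G$ (hence every member) $2$-connected, and then to upgrade each member $G_i$ so that it too is $2$-connected. For the first stage, I would repeatedly ``split'' cut vertices. If $v$ is a cut vertex of $G$ with blocks $B_1,\dots,B_k$ around it, replace $v$ by a small clique (or a path) $v_1,\dots,v_k$, one copy per block, re-attaching $B_j$ to $v_j$, and add the edges $v_jv_{j+1}$; this is a planar graph $G''$ with $G\prec G'$ (contract the clique back to $v$), and since the new edges can be drawn inside a tiny disk around the old position of $v$, planarity is preserved. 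For the cover, each member $G_i$ either avoids $v$ (leave it unchanged) or contains $v$; in the latter case replace the copy of $v$ in $G_i$ by the copies $v_j$ corresponding to the blocks $B_j$ that $G_i$ meets, and add the connecting edges among them. One checks that intersections $G_\sigma$ are transformed the same way and remain connected (a path through $v$ in $G_\sigma$ is rerouted through the new clique), and that $N(\mathcal F)$ is unchanged because membership of $\sigma$ in the nerve is governed by whether the (modified) $G_i$'s still share a vertex — and they do, since sharing $v$ becomes sharing some $v_j$. Iterating over all cut vertices terminates and yields a $2$-connected planar $G'$ with an isomorphic-nerve cover $\mathcal F'$.

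For the second stage, having ensured $G$ is $2$-connected, I would make each \emph{member} $2$-connected by the same kind of vertex-splitting, but now performed ``inside'' the ambient graph. If $v$ is a cut vertex of some member $G_i$, I would again replace $v$ globally in $G'$ by a clique of copies $v_1,\dots,v_k$ indexed by the blocks of the local structure of $G'$ at $v$, distributing the edges of $G'$ at $v$ among the copies according to which block they belong to in \emph{each} member through a consistent choice, and adding all edges among the $v_j$. The key point is that one must split $v$ in a way that simultaneously repairs the connectivity of every member containing $v$; since all members are induced subgraphs of the single graph $G'$, and the new clique is complete, any member that contained $v$ simply contains the subset of copies $v_j$ that it ``uses'', these copies form a clique, and the member becomes locally $2$-connected at that spot. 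As before, planarity is preserved (clique drawn in a disk about $v$), $G\prec G'$ (contract cliques), intersections stay connected, and the nerve is unchanged. Because each split strictly increases a suitable block-count while keeping the vertex count of the cover bounded, the process terminates.

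The main obstacle I expect is \textbf{bookkeeping the consistency across all members simultaneously}: when we split a vertex $v$, the partition of the edges at $v$ into ``blocks'' depends on the graph, and different members of $\mathcal F$ may see $v$ with different block structures, so one must choose a single fine enough partition (say, the common refinement, or more robustly just take $k$ = degree of $v$ and split into that many copies, attaching each incident edge to its own copy) that repairs all of them at once while still keeping every $G_\sigma$ connected through the freshly-added clique. One has to verify carefully that this global splitting does not accidentally disconnect some intersection $G_\sigma$ that previously passed through $v$ — this is where the completeness of the replacement clique and the fact that all relevant copies of $v$ lie in $G_\sigma$ is used — and that, after finitely many such operations, no member has a cut vertex. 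A clean way to organize the termination is to note that the total number of (block, member) incidences strictly decreases, or equivalently to do all splittings for $G$ first, then observe that in a $2$-connected $G'$ one can handle the members one at a time, each requiring finitely many splits, since splitting for one member does not create new cut vertices in already-$2$-connected members.
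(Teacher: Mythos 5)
There is a genuine gap in the mechanism you propose, and it shows up at both stages. First, replacing a vertex $v$ by a \emph{clique} of copies drawn inside a small disk around $v$ does not preserve planarity once the number of copies is large: the copies must lie near the boundary of the disk so that the blocks (or edges) attached to them can leave the disk without crossings, and a complete graph on five or more such boundary points cannot be drawn inside the disk. So for planarity you are forced to join the copies by a path or cycle respecting the rotation at $v$ --- but then your rule ``each member keeps only the copies $v_j$ corresponding to the blocks it meets'' fails: a member meeting blocks $B_1$ and $B_3$ but not $B_2$ receives the non-adjacent copies $v_1,v_3$ and becomes disconnected, and the same problem disconnects intersections $G_\sigma$. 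The alternative of giving each member a contiguous range of copies breaks the nerve isomorphism instead: two members that previously met only at $v$ but attach to disjoint sets of blocks would receive disjoint sets of copies, so the simplex $\{i,i'\}$ disappears from the nerve. The only consistent repair is that \emph{every} member containing $v$ keeps \emph{all} copies of $v$, with the copies joined in a cycle following the rotation at $v$ and each old edge replaced by a pair of parallel edges --- and at that point you have reconstructed the paper's ``thickening'' construction, which does the whole job in a single step: every vertex becomes a cycle and every edge a pair of non-crossing edges, so $G'$ and every lifted member are automatically $2$-connected, $G\prec G'$ by contracting the cycles, and the nerve is unchanged because a member's lift is induced by the union of entire cycles over its vertex set. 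Your second stage inherits the same difficulty in aggravated form (different members see incompatible block structures at $v$, which you correctly flag as the main obstacle but do not resolve), and it is rendered unnecessary by the one-shot construction.

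A smaller point: your termination argument would also need care, since splitting a vertex for one member can in principle create new cut vertices in another; the paper avoids this entirely by not iterating.
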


\begin{proof}
We first construct $G'$. Fix a planar embedding of $G$ where the vertices are points in general position and the edges are straight line segments. (This is possible by F{\'a}ry's theorem \cite{fary}.) For every vertex $v$, draw a circle $S_v$ of radius $\epsilon>0$ centered at $v$. We choose $\epsilon$ sufficiently small such that no three circles can be intersected by a line. In particular any two circles are pairwise disjoint. For each edge $e$ incident to the vertex $v$, choose a closed arc $A_{(v,e)}\subset S_v$ whose interior intersects $e$, such that $A_{(v,e)}\cap A_{(v,e')} = \emptyset$ for any pair of distinct edges $e$ and $e'$.

The vertex set of $G'$ will consist of the endpoints of the arcs $A_{(v,e)}$. In this way, a vertex in $G$ of degree $k$ gives rise to a set of $2k$ vertices in $G'$. The edge set of $G'$ is constructed as follows. Put edges between the vertices on the circle $S_v$ such that they form a cycle according to their cyclic order on $S_v$ (or just a single edge if the circle contains only a single arc corresponding to a vertex of degree one in $G$). Finally, for an edge $e=vw$ in $G$ which intersects the arcs $A_{(v,e)}\subset S_v$ and $A_{(w,e)}\subset S_w$, we add two more edges to $G'$ by matching up the endpoints of the arcs by a pair of non-crossing segments. (See Figure \ref{fig:thickening}.)

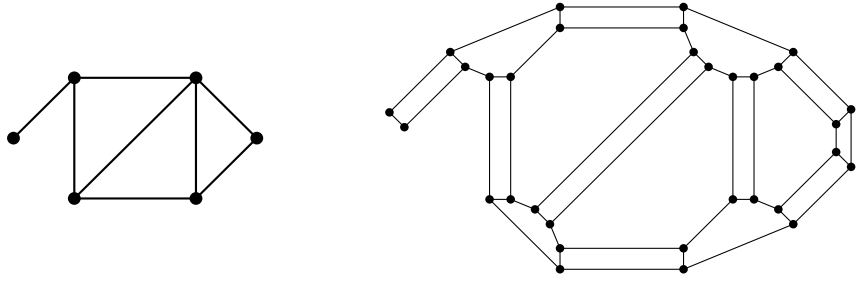
\begin{figure}[ht]
\centering
\begin{tikzpicture}[scale = .8]
\begin{scope}[xshift = 5cm]
\coordinate (A) at (0,0);
\coordinate (A1) at ($(A)+(-10:1cm)$);
\coordinate (A2) at ($(A)+(10:1cm)$);
\coordinate (A3) at ($(A)+(35:1cm)$);
\coordinate (A4) at ($(A)+(55:1cm)$);
\coordinate (A11) at ($(A)+(80:1cm)$);
\coordinate (A12) at ($(A)+(100:1cm)$);

\coordinate (B) at (4,0);
\coordinate (B1) at ($(B)+(190:1cm)$);
\coordinate (B2) at ($(B)+(170:1cm)$);
\coordinate (B3) at ($(B)+(35:1cm)$);
\coordinate (B4) at ($(B)+(55:1cm)$);
\coordinate (B7) at ($(B)+(80:1cm)$);
\coordinate (B8) at ($(B)+(100:1cm)$);

\coordinate (C) at (6,2);

\coordinate (D) at (4,4);
\coordinate (D5) at ($(D)+(-35:1cm)$);
\coordinate (D6) at ($(D)+(-55:1cm)$);
\coordinate (D7) at ($(D)+(-80:1cm)$);
\coordinate (D8) at ($(D)+(-100:1cm)$);
\coordinate (D3) at ($(D)+(235:1cm)$);
\coordinate (D4) at ($(D)+(215:1cm)$);
\coordinate (D9) at ($(D)+(170:1cm)$);
\coordinate (D10) at ($(D)+(190:1cm)$);

\coordinate (C5) at ($(D5)+(.95,-.95)$);
\coordinate (C6) at ($(D6)+(.95,-.95)$);
\coordinate (C3) at ($(B3)+(.95,.95)$);
\coordinate (C4) at ($(B4)+(.95,.95)$);

\coordinate (E) at (0,4);
\coordinate (E9) at ($(E)+(10:1cm)$);
\coordinate (E10) at ($(E)+(-10:1cm)$);
\coordinate (E11) at ($(E)+(-80:1cm)$);
\coordinate (E12) at ($(E)+(-100:1cm)$);
\coordinate (E1) at ($(E)+(-125:1cm)$);
\coordinate (E2) at ($(E)+(-145:1cm)$);

\coordinate (F) at (-2,2);
\coordinate (F1) at ($(E1)+(-1,-1)$);
\coordinate (F2) at ($(E2)+(-1,-1)$);

\draw (A1)--(A2)--(A3)--(A4)--(A11)--(A12)--cycle;
\draw (B2)--(B1)--(B3)--(B4)--(B7)--(B8)--cycle;
\draw (C5)--(C6)--(C4)--(C3)--cycle;
\draw (D9)--(D10)--(D4)--(D3)--(D8)--(D7)--(D6)--(D5)--cycle;
\draw (E1)--(E2)--(E9)--(E10)--(E11)--(E12)--cycle;
\draw (F1)--(F2);

\draw (A1)--(B1) (A2)--(B2);
\draw (B3)--(C3) (B4)--(C4);
\draw (C5)--(D5) (C6)--(D6);
\draw (B7)--(D7) (B8)--(D8);
\draw (A3)--(D3) (A4)--(D4);
\draw (D9)--(E9) (D10)--(E10);
\draw (A11)--(E11) (A12)--(E12);
\draw (E1)--(F1) (E2)--(F2);

\fill (A1) circle (2pt);
\fill (A2) circle (2pt);
\fill (A3) circle (2pt);
\fill (A4) circle (2pt);
\fill (A11) circle (2pt);
\fill (A12) circle (2pt);

\fill (B1) circle (2pt);
\fill (B2) circle (2pt);
\fill (B3) circle (2pt);
\fill (B4) circle (2pt);
\fill (B7) circle (2pt);
\fill (B8) circle (2pt);

\fill (C3) circle (2pt);
\fill (C4) circle (2pt);
\fill (C5) circle (2pt);
\fill (C6) circle (2pt);

\fill (D3) circle (2pt);
\fill (D4) circle (2pt);
\fill (D5) circle (2pt);
\fill (D6) circle (2pt);
\fill (D7) circle (2pt);
\fill (D8) circle (2pt);
\fill (D9) circle (2pt);
\fill (D10) circle (2pt);

\fill (E1) circle (2pt);
\fill (E2) circle (2pt);
\fill (E9) circle (2pt);
\fill (E10) circle (2pt);
\fill (E11) circle (2pt);
\fill (E12) circle (2pt);
\fill (F1) circle (2pt);
\fill (F2) circle (2pt);
\end{scope}

\begin{scope}[xshift = -2cm, yshift = 1cm]
\draw[thick] (2,0)--(3,1)--(2,2)--(2,0) (2,2)--(0,2) (2,2)--(0,0);
\draw[thick] (-1,1)--(0,2)--(0,0)--(2,0);

\fill (0,0) circle (3pt);
\fill (2,0) circle (3pt);
\fill (3,1) circle (3pt);
\fill (2,2) circle (3pt);
\fill (0,2) circle (3pt);
\fill (-1,1) circle (3pt);
\end{scope}

\end{tikzpicture}
\caption{Thickening a graph to make it 2-connected.  \label{fig:thickening}}
\end{figure}

It is easy to see that $G\prec G'$. There is a natural partition of the vertices of $G'$ according to which circle $S_v$ a vertex in $G'$ belongs to. By construction, the subgraph induced by the vertices on a fixed circle is connected,  and for any edge $e=vw$ in $G$ there is an edge connecting a vertex in $S_v$ to a vertex in $S_w$. This remains true even if we delete an arbitrary vertex from $G'$, or in other words, $G\prec G'-v$ for any vertex $v\in G'$. Since $G$ is connected, it follows that $G'$ is 2-connected. 

Given a connected cover $\mathcal{F}$ in $G$, we can embed $\mathcal{F}$ into $G'$ as in the proof of Theorem \ref{l-minor} to obtain a connected cover $\mathcal{F}'$ of $G'$. Thus, for a member of $G_x = G[V_x] \in \mathcal{F}$, the corresponding member of $\mathcal{F}'$ is the subgraph of $G'$ induced by the set of vertices belonging to the circles $\{S_v  :  v\in V_x\}$. The same argument as before shows that each member of $\mathcal{F}'$ is 2-connected.
\end{proof}

\subsection{Filling faces}\label{sec:filling} 
Let $G$ be a 2-connected planar graph with a fixed embedding in $\mathbb{S}^2$ and let $H = G[W]$ be a 2-connected induced subgraph of $G$. (Note that this fixes an embedding of $H$ as well.) Consider a face $f$ of $H$ and let $U$ denote the subset of vertices of $G$ which are contained in the interior of $f$. (If $H$ is a cycle, then just choose one of the two faces bounded by $f$). Observe that $f$ is also a face of $G$ if and only if $U = \emptyset$.

Define the graph $H'$ to be the induced subgraph obtained by adding the vertices of $U$ to $H$, that is,  
\[H' = G[W\cup U].\]
We say that $H'$ is obtained from $H$ by {\bf{\em filling the face$\bm{f}$}}. 

\begin{lem}\label{fill-conn}
Let $G$ be a 2-connected planar graph embedded in $\mathbb{S}^2$ and let $H$ be a 2-connected induced subgraph of $G$. If $H'$ is obtained from $H$ by filling a face, then $H'$ is 2-connected.
\end{lem}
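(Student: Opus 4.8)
\textbf{Proof proposal for Lemma~\ref{fill-conn}.}

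The plan is to show that $H'$ has no cut vertex. Write $H = G[W]$, $f$ for the chosen face of $H$, $U$ for the set of vertices of $G$ in the interior of $f$, and $H' = G[W\cup U]$. Since $H$ is $2$-connected, the face $f$ is bounded by a cycle $C$ of $H$; let $C$ be that boundary cycle. The key geometric observation is that, in the fixed embedding of $G$ in $\mathbb S^2$, every vertex of $U$ lies in the open disk $\mathring D$ bounded by $C$, and every edge of $G$ incident to a vertex of $U$ must also lie in the closed disk $\overline D = \mathring D \cup C$ (it cannot leave $f$ through $C$, since $C\subset H\subset G$ and edges do not cross). Hence $H'$ restricted to $\overline D$ is exactly $H'[W' \cup U]$ where $W' = V(C)$, and there are no edges of $H'$ from $U$ to $W\setminus V(C)$.

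First I would reduce to checking $2$-connectedness of the subgraph $D := H'[V(C)\cup U]$ consisting of the boundary cycle $C$ together with everything filled into $f$. Indeed, $H'$ is the union of $H$ and $D$, these two $2$-connected-candidate pieces share exactly the cycle $C$ (which has at least two vertices), and there are no edges between $U$ and $W\setminus V(C)$. A standard fact about gluing graphs along a subgraph containing at least two vertices when one glues along a cycle: if $H$ and $D$ are each $2$-connected and $V(H)\cap V(D) = V(C)$ induces a connected subgraph with $\geq 2$ vertices in each, then $H\cup D$ is $2$-connected. (Concretely: any would-be cut vertex $z$ of $H\cup D$; if $z\notin V(C)$ then since $H$ and $D$ are each connected after deleting $z$ and still contain all of $V(C)$, the union stays connected; if $z\in V(C)$, then $H-z$ and $D-z$ are connected and share the vertex set $V(C)\setminus\{z\}$, which is nonempty because $|V(C)|\geq 3$, so again the union is connected.) So it remains to prove $D$ is $2$-connected.

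To see $D$ is $2$-connected, note $D$ is itself a plane graph embedded in $\overline D$ with outer boundary the cycle $C$; it is connected because $G$ is connected and $C\subset D$, and moreover each vertex of $U$ is joined to $C$ within $D$ (walk in $G$ from a $U$-vertex outward; the walk stays in $\overline D$ and first exits $U$ at a vertex of $C$, or earlier meets $C$). Now suppose $z$ is a cut vertex of $D$. Removing $z$ would disconnect $D$, but the outer cycle $C$ minus $z$ remains a path (hence connected) inside $D-z$, and every vertex of $U\setminus\{z\}$ still has a path to $C$ in $D$ — one must check such a path can be chosen avoiding $z$, which follows from planarity: a vertex separated from $C$ by a single vertex $z$ would have to be enclosed by a cycle through $z$ and the rest of $D$, but then $z$ together with $C$ bounds the region, contradicting that $z$ is a single vertex separating an interior region whose boundary (being a cycle in a $2$-connected supergraph $G$... ) — more cleanly, invoke that $G$ is $2$-connected: in $G$ there are two internally disjoint paths between any $U$-vertex and any $C$-vertex, and by the planarity/enclosure argument these can be pushed into $\overline D$, yielding two internally disjoint $u$--$C$ paths in $D$. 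Combined with the two-arcs structure of $C$, this gives, via Menger, that $D$ has no cut vertex.

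\textbf{Main obstacle.} The topological step — arguing that all edges incident to $U$ stay inside $\overline D$ and, more delicately, that the two internally disjoint paths witnessing $2$-connectedness of $G$ can be rerouted to lie inside $\overline D$ — is the crux; it is the point where the fixed planar embedding and the Jordan curve theorem (applied to the cycle $C$) are genuinely used. Once that localization is in hand, the graph-theoretic gluing along the cycle $C$ is routine. One should also not forget the degenerate case flagged in the statement where $H$ is itself a cycle (so $H=C$ and one simply chooses one of the two faces); there $D = H'$ and the argument above applies verbatim with $W = V(C)$.
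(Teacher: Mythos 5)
Your proposal is correct in substance but takes a different route from the paper. The paper verifies $2$-connectedness directly: for arbitrary $u,v,w\in H'$ it produces a $uv$-path avoiding $w$ by taking such a path $\pi$ in $G$ (which exists since $G$ is $2$-connected) and truncating it at the first vertex not in the interior of $f$ --- that vertex necessarily lies on the boundary cycle of $f$ and hence in $H$, so the truncated segment lies in $H'$ and can be concatenated with a path inside $H$. Your proof instead decomposes $H'$ as the union of $H$ and the ``filled disk'' $D=H'[V(C)\cup U]$, proves a gluing lemma for two $2$-connected graphs sharing the cycle $C$, and then separately argues that $D$ is $2$-connected. This is fine, and the localization facts you use (no edge leaves $\overline D$ through $C$; a path entering $U$ from outside must pass through $C$) are exactly the topological input the paper uses. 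Two remarks. First, the step you flag as the ``main obstacle'' --- pushing the two internally disjoint $u$--$c$ paths of $G$ into $\overline D$ --- is where your write-up stays at the level of a gesture (your first attempt at it even trails off mid-sentence); the clean way to do it is the paper's truncation device: each path, read from $u$, first leaves the interior of $f$ at a vertex of $C$, and the initial segments up to those first boundary vertices are internally disjoint paths lying in $D$; one of them avoids any prescribed $z$ since $z$ is internal to at most one of the original paths. Second, the paper's direct argument is shorter precisely because it never needs $D$ to be $2$-connected nor the gluing lemma: one application of the truncation trick per endpoint of the desired path suffices. What your decomposition buys is a slightly more structural picture (outer part glued to a $2$-connected disk along $C$), but at the cost of two extra lemmas.
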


\begin{proof}
We need to show that for any distinct vertices $u,v,w$ in $H'$ there is a $uv$-path in $H'$ which does not pass through $w$. If $u$ and $v$ both belong to $H$, then this follows from the 2-connectedness of $H$. Now consider the case when $u$ is a vertex in the interior of the face $f$ of $H$ which is being filled. By the 2-connectedness of $G$, there exists a $uv$-path $\pi$ (in $G$) which avoids $w$. If $\pi$ is contained in the interior of $f$, then we are done. Otherwise, let $u'$ be the first vertex of $\pi$ which is not in the interior of the face $f$. It follows that $u'$ is a vertex in the cycle of $G$ bounding the face $f$ and therefore $u'$ belongs to $H$. Let $\pi_1$ be the initial part of $\pi$ connecting $u$ to $u'$. Note that $\pi_1$ is in $H'$. If $v$ is a vertex of $H$, then we are done by concatenating $\pi_1$ with a $u'v$-path in $H$ which avoids $w$. If $v$ is in the interior of $f$, by the same argument as before, there is path in $H'$ avoiding $w$, connecting $v$ to a vertex $v'$ in the cycle bounding $f$. Now, $u'$ and $v'$ can be connected by a path in $H$ avoiding $w$.
\end{proof}

Consider a 2-connected cover $\mathcal{F}$ in $G$. Fix a member $G_x\in \mathcal{F}$ and let $G_x'$ be the graph obtained from $G_x$ by filling a face. Replacing $G_x$ by $G_x'$, we obtain a new family
 \[\mathcal{F}' = (\mathcal{F}\setminus \{G_x\}) \cup \{G'_x\}.\]
% Comment 6: For the natural inclusion you mention, I guess that you do not mean naturality in the sense of category theory?
% Change 6
We say $\mathcal{F}'$ is obtained from $\mathcal{F}$ by filling a face. Note that there is an obvious inclusion $N(\mathcal{F})\subset N(\mathcal{F}')$.

\begin{lem}\label{filling is connected}
Let $\mathcal{F}$ be a $2$-connected cover in a planar graph $G$ embedded in $\mathbb{S}^2$. If $\mathcal{F}'$ is obtained from $\mathcal{F}$ by filling a face, then $\mathcal{F}'$ is a $2$-connected cover of $G$. 
\end{lem}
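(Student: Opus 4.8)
The plan is to show that $\mathcal{F}'$ is a connected cover (the 2-connectedness of each member being already handled by Lemma~\ref{fill-conn}, since $G_x'$ is obtained from the 2-connected induced subgraph $G_x$ by filling a face, and every other member of $\mathcal{F}'$ equals the corresponding member of $\mathcal{F}$ and hence is 2-connected). So the heart of the matter is to verify that $G'_\sigma = \bigcap_{i\in\sigma}G'_i$ is connected for every $\sigma\in N(\mathcal{F}')$. Here $G'_i = G_i$ for $i\neq x$, and $G'_x = G[W\cup U]$ where $W = V(G_x)$ and $U$ is the vertex set in the interior of the filled face $f$.

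First I would dispose of the faces $\sigma$ that do not contain the vertex $x$: for these $G'_\sigma = G_\sigma$, which is connected because $\mathcal{F}$ is a connected cover (note $N(\mathcal{F})\subseteq N(\mathcal{F}')$, so $\sigma\in N(\mathcal{F})$ as well). Now suppose $x\in\sigma$, say $\sigma = \tau\cup\{x\}$ with $x\notin\tau$. Write $G'_\sigma = G'_x \cap G_\tau$ where $G_\tau = \bigcap_{i\in\tau}G_i$ (with the convention $G_\tau = G$ if $\tau=\emptyset$). Since $G'_x \supseteq G_x$, we have $G'_\sigma \supseteq G_x\cap G_\tau = G_\sigma$. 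If $\sigma\in N(\mathcal{F})$, then $G_\sigma$ is already connected and nonempty; the point is that $G'_\sigma$ is obtained from $G_\sigma$ by adding some vertices of $U$, and I would argue these extra vertices can each be connected back to $G_\sigma$ inside $G'_\sigma$. Concretely, take a vertex $u\in U\cap V(G_\tau)$ (these are exactly the new vertices of $G'_\sigma$ beyond $G_\sigma$); since $\tau\in N(\mathcal{F})$ (being a face of $\sigma$) and $G_\tau$ is connected containing both $u$ and a vertex of $G_\sigma$, there is a path $\pi$ in $G_\tau$ from $u$ into $G_\sigma$. Following $\pi$ until it first leaves the open face $f$, the exit vertex $u'$ lies on the boundary cycle of $f$, hence $u'\in W\cap V(G_\tau) = V(G_\sigma)$, and the initial segment of $\pi$ lies in $(W\cup U)\cap V(G_\tau) = V(G'_\sigma)$ — this is precisely the rerouting argument of Lemma~\ref{fill-conn}. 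This shows every new vertex connects to $G_\sigma$ within $G'_\sigma$, so $G'_\sigma$ is connected. The remaining possibility is $\sigma\in N(\mathcal{F}')\setminus N(\mathcal{F})$ with $x\in\sigma$, i.e. $G_\sigma = \emptyset$ but $G'_\sigma\neq\emptyset$; then $G'_\sigma$ consists entirely of vertices of $U\cap V(G_\tau)$, and the same rerouting applied to a path in $G_\tau$ between any two such vertices (again using 2-connectedness of $G_\tau$ — which holds as $G_\tau$ is an intersection of members of the 2-connected cover $\mathcal F$, hence itself 2-connected, or at least connected) keeps us inside $V(G'_\sigma)$, giving connectedness. (Here I would lean on the earlier discussion: intersections of members of a connected cover are connected, and the boundary of a face of a 2-connected planar graph is a cycle, so "leaving the face" always happens at a boundary vertex.)

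The main obstacle I anticipate is the bookkeeping for the case $x\in\sigma$ when $G_\sigma$ may be empty: one must be careful that the rerouting argument only ever uses a path in $G_\tau$ (which is genuinely connected) and that every boundary vertex of the face $f$ of $G_x$ that the path hits actually lies in $G_\tau$, so that it lands in $V(G'_\sigma)$. This is where the hypothesis that $G_x$ is an \emph{induced} subgraph, and that $f$ is a face of $G_x$ (so its boundary cycle is a subgraph of $G_x$, i.e. of $W$), is used. Once this is set up cleanly, the proof is a direct adaptation of the proof of Lemma~\ref{fill-conn}, and no new idea is needed beyond observing that the path witnessing connectedness should be taken in $G_\tau$ rather than in all of $G$, so that it stays within the relevant intersection.
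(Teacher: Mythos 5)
Your proof is correct and follows essentially the same route as the paper's: reduce to connectedness via Lemma~\ref{fill-conn}, then for each intersection show that every vertex of $U$ it contains is joined to the rest through the boundary cycle of $f$, which lies in $G_x$ and is the only way a path can exit the face. One small caveat: your parenthetical claim that $G_\tau$ is $2$-connected is false in general (intersections of members of a $2$-connected cover need only be connected), but since you only use connectedness of $G_\tau$, which holds because $\tau\in N(\mathcal{F})$, the argument stands.
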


\begin{proof}
Let $f$ be the face of $G_x = G[V_x]\in \mathcal{F}$ that is being filled and let $U$ be the set of vertices in $G$ contained in the interior of $f$. We may assume $U\neq \emptyset$, or else $\mathcal{F} = \mathcal{F}'$ and we are done. As above, we define $G_x' = G[V_x\cup U]$ and $\mathcal{F}' = (\mathcal{F}\setminus \{G_x\})\cup \{G_x'\}$. By Lemma \ref{fill-conn} we know that $G_x'$ is 2-connected, so it suffices to show that $\mathcal{F}'$ is a connected cover of $G$.

Consider a simplex $\sigma \in N(\FF)-x$ and suppose $G_\sigma \cap G'_x \neq \emptyset$. We need to show that $G_\sigma \cap G'_x$ is connected. If $G_\sigma \cap U =\emptyset$, then $G_\sigma \cap G_x' = G_\sigma \cap G_x$ which is
connected since $\FF$ is a connected cover. If $G_\sigma \cap G_x = \emptyset$, then the vertices of $G_\sigma$ are contained in $U$ and $G_\sigma \cap G_x' = G_\sigma$ which is connected. It remains to consider the case when $G_\sigma \cap U\neq \emptyset$ and $G_\sigma\cap G_x\neq \emptyset$. Since $G_\sigma \cap G_x$ is connected it suffices to show that each connected component of $G_\sigma \cap G[U]$ is connected to $G_\sigma \cap G_x$ by some edge in $G$. But this is obvious, because $G_\sigma$ is connected and any path from a vertex in $U$ to a vertex which is not in $U$ must pass through a vertex in the cycle which bounds the face $f$, which is contained in $G_x$. 
\end{proof}

\subsection{Critical cycles}
Let $K$ be a simplicial complex and let $C_d(K)$ denote group of $d$-chains of $K$. Denote by $Z_d(K)$ the group of $d$-cycles, and by $B_d(K)$ the group of $d$-boundaries.

Given a vertex $x\in K$ and a subcomplex $L\subset K-x$, let $K_{(x,L)}$ be the simplicial complex defined as \[K_{(x,L)} = K \cup \{\sigma\cup \{x\} : \sigma\in L\}.\] (In other words,  $K_{(x,L)}$ is the union of $K$ and the cone over $L$ with apex $x$.)

By inclusion, every $d$-chain $\alpha = \sum \sigma_i \in C_d(L)$ can be regarded as a $d$-chain in $C_d(K)$ and in $C_d(K_{(x,L)})$. Moreover, we may form a $(d+1)$-chain $[x,\alpha] \in C_{d+1}(K_{(x,L)})$ defined as 
\[[x,\alpha] = 
\begin{cases}
{\textstyle \sum} \left(\sigma_j \cup \{x\} \right), &\text{for } \alpha \neq 0 ,\\
0, &\text{for } \alpha = 0.
\end{cases}\]
A $d$-cycle $\gamma\in Z_d(K)$ is called {\bf{\em $\bm{(x,L)}$-critical}} if $\gamma \not\in B_d(K)$ and $\gamma \in B_d(K_{(x,L)})$. In other words, $\gamma$ is $(x,L)$-critical if $\gamma$ does not vanish in $\tilde{H}_d(K)$ while  $\gamma$ vanishes in $\tilde{H}_d(K_{(x,L)})$.

\begin{lem} \label{critical}
Every $(x,L)$-critical $d$-cycle in $Z_d(K)$ is homologous to a $d$-cycle of the form
\[\gamma = \partial[x,\beta],\]
where
\begin{enumerate} 
\item $\beta = \sum\sigma_i \in C_d(L)$,
\item $\sigma_i \cup \{x\} \in K_{(x,L)}\setminus K$ for all $i$, and
\item $[x,\partial \beta] \in C_d(K)$.
\end{enumerate}
\end{lem}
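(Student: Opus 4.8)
The statement says that an $(x,L)$-critical cycle $\gamma$ can be replaced, up to homology in $K$, by one of the very restricted form $\partial[x,\beta]$ satisfying (1)--(3). My plan is to start from the definition: since $\gamma \in B_d(K_{(x,L)})$, there is some $(d+1)$-chain $\Phi \in C_{d+1}(K_{(x,L)})$ with $\partial \Phi = \gamma$. Now decompose $\Phi$ according to whether a $(d+1)$-simplex contains the apex $x$ or not: write $\Phi = \Phi_0 + [x,\beta]$, where $\Phi_0 \in C_{d+1}(K)$ collects the simplices not containing $x$, and $\beta \in C_d(L)$ is the (unique) $d$-chain with the property that $[x,\beta]$ collects precisely the simplices of $\Phi$ containing $x$; concretely $\beta$ is obtained from $\Phi$ by the ``cone-off'' operation (deleting $x$ from each simplex that contains it, discarding the rest). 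By discarding the simplices $\sigma_i \cup \{x\}$ that already lie in $K$ (they can be absorbed into $\Phi_0$), we may further assume $\sigma_i \cup \{x\} \in K_{(x,L)}\setminus K$ for every $i$, which is condition (2).

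\textbf{Key computation.} The next step is to apply $\partial$ and separate the $x$-part from the $K$-part. Using the standard cone identity $\partial[x,\beta] = \beta - [x,\partial\beta]$ (over $\mathbb{Z}_2$, $\beta + [x,\partial\beta]$), we get
\[
\gamma = \partial\Phi = \partial\Phi_0 + \partial[x,\beta] = \partial\Phi_0 + \beta + [x,\partial\beta].
\]
Now $\gamma$, $\partial\Phi_0$, and $\beta$ are all chains in $K$ (for $\beta$ this uses $\beta \in C_d(L) \subset C_d(K-x) \subset C_d(K)$), so the remaining term $[x,\partial\beta]$ must also be a chain in $K$; but every simplex appearing in $[x,\partial\beta]$ contains $x$, and the only simplices of $K$ containing $x$ are those of $\mathrm{st}_K(x)$. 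Hence $[x,\partial\beta] \in C_d(\mathrm{st}_K(x)) \subset C_d(K)$, which is condition (3). (One should check that the cancellation of simplices containing $x$ when forming $\partial\Phi$ does not destroy this; since $\Phi_0$ contributes no simplices containing $x$, the $x$-simplices of $\partial[x,\beta]$ survive into $\gamma$ only if they lie in $K$ — but $\gamma$ is an honest cycle of $K$, so this is automatic and in fact forces (3) directly.) Finally, rearranging gives
\[
\gamma + \beta + [x,\partial\beta] = \partial\Phi_0 \in B_d(K),
\]
so $\gamma$ is homologous in $K$ to $\beta + [x,\partial\beta] = \partial[x,\beta]$, which is the asserted form with $\beta$ satisfying (1), (2), (3). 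Relabelling this representative as the new $\gamma$ completes the argument.

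\textbf{Main obstacle.} The routine part is the chain-level bookkeeping; the one genuinely delicate point is \emph{justifying condition (3)} — i.e., that after the decomposition, $[x,\partial\beta]$ really lands in $C_d(K)$ and not merely in $C_d(K_{(x,L)})$. The cleanest way is the observation above: $\gamma \in C_d(K)$ and $\partial\Phi_0 \in C_d(K)$ and $\beta \in C_d(K)$ by construction, so $[x,\partial\beta] = \gamma + \beta + \partial\Phi_0$ is forced to lie in $C_d(K)$. A secondary subtlety is the reduction achieving condition (2): when we move simplices $\sigma_i \cup \{x\} \in K$ out of $[x,\beta]$ and into $\Phi_0$, we must make sure this does not change $\partial\Phi = \gamma$, which is clear since we are only regrouping summands of the same chain $\Phi$, and that it does not violate $\beta \in C_d(L)$, which holds because we are only deleting terms from $\beta$ and $L$ is a subcomplex so each $\sigma_i$ remains in $L$. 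With these two checks in place the lemma follows.
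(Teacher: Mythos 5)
Your proof is correct and follows essentially the same route as the paper: decompose the bounding chain into its part lying in $K$ and its part lying in $K_{(x,L)}\setminus K$, write the latter as a cone $[x,\beta]$ with $\beta\in C_d(L)$, apply the product rule $\partial[x,\beta]=\beta+[x,\partial\beta]$, and deduce condition (3) by observing that every other term in the resulting identity already lies in $C_d(K)$. The only cosmetic slip is that your \emph{initial} $\beta$ (formed before discarding the $x$-containing simplices that already lie in $K$) need not be a chain of $L$, since a simplex $\sigma\cup\{x\}\in\operatorname{st}_K(x)$ can have $\sigma\notin L$; but your subsequent reduction removes exactly those terms, after which $\sigma_i\in L$ is forced by the definition of $K_{(x,L)}$, so the argument stands.
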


\begin{proof}
Suppose $\gamma_0\in Z_d(K)$ is $(x,L)$-critical. Since $\gamma_0 \in B_d(K_{(x,L)})$, we may write $\gamma_0 = \partial \beta_0$, where $\beta_0 \in C_{d+1}(K_{(x,L)})$. Partition the simplices in $\beta_0$ as 
\[\beta_0 = \beta_{\text{in } K} + \beta_{\text{not in } K},\] 
where the simplices of $\beta_{\text{in } K}$ belong to $K$ and the simplices of $\beta_{\text{not in } K}$ belong to $K_{(x,L)}\setminus K$. Then 
\[\gamma = \gamma_0 + \partial\beta_{\text{in }K} = \partial \beta_{\text{not in }K}\] is an $(x,L)$-critical cycle homologous to $\gamma_0$. By definition, any $(d+1)$-simplex which is in $K_{(x,L)}\setminus K$ is of the form $\sigma \cup\{x\}$ where $\sigma$ is a $d$-simplex in $L$. We may therefore write
\[\beta_{\text{not in } K} = [x,\beta],\] where $\beta = \sum\sigma_i \in C_d(L)$ and $\sigma_i\cup \{x\} \in K_{(x,L)}\setminus K$ for all $i$. Finally, observe that the boundary operator satisfies the ``product rule''
\[\partial [x,\beta] = \beta + [x,\partial\beta].\]
% Comment 7: Perhaps adding gamma = partial[x, beta] in Z_d(K) \subet Cd(K) could help for deducing why gamma belongs to C_d(K). (Is there more obvious way to deduce so than from gamma = gamma_0 + partial beta_{in K}?)
% Change 7
Since $\gamma = \partial[x,\beta] \in Z_d(K) \subset C_d(K)$ and $\beta \in C_d(L)\subset C_d(K)$, it follows that $[x,\partial\beta] \in C_d(K)$.
\end{proof}

\subsection{A combinatorial identity}\label{boundary identity} %Here we establish a combinatorial identity needed for the proof of Theorem \ref{3homol}. It plays a crucial role in showing that $(x,L)$-critical 3-cycles do not occur when we fill faces. 

Let $\tau = \sum \tau_i$ be a simplicial 2-cycle and suppose we are given a linear ordering of the simplices in $\tau$, say, 
\[\tau_1 \prec \tau_2 \prec \cdots \prec \tau_n.\]
For any ordered sequence $\tau_i\prec\tau_j\prec\tau_k\prec\tau_l$, we define
\[\eta_{(\tau_i\prec\tau_j\prec\tau_k\prec\tau_l)}= 
\begin{cases}
(\tau_{i} \cap \tau_{k}) \cup (\tau_{j} \cap \tau_{l}), &  \text{when } |(\tau_{i} \cap \tau_{k}) \cup (\tau_{j} \cap \tau_{l})| = 4, \\
0, & \text{otherwise.}
\end{cases}
\]
Note that if $\eta_{(\tau_i\prec\tau_j\prec\tau_k\prec\tau_l)}\neq 0$, then $|\tau_i\cap \tau_k| = |\tau_j\cap \tau_l| = 2$ and $(\tau_i\cap \tau_k) \cap (\tau_j\cap \tau_l) = \emptyset$. We may therefore regard $\eta_{(\tau_i\prec\tau_j\prec\tau_k\prec\tau_l)}$ as a 3-simplex obtained as the join of the disjoint pair of 1-simplices $\tau_i\cap \tau_k$ and $\tau_j\cap \tau_l$. By doing so, we can form a simplicial 3-chain $T_{(\tau, \prec)}$, defined as
\[T_{(\tau, \prec)} = {\textstyle{\sum}} \eta_{(\tau_i\prec\tau_j\prec\tau_k\prec\tau_l)},\] 
where the sum is over all ordered sequences $\tau_i\prec\tau_j\prec\tau_k\prec\tau_l$ of simplices in $\tau$.

\begin{remark} \label{geom int}
Observe that the 3-chain $T_{(\tau,\prec)}$ only depends on the separation relation induced by $\prec$. In other words, any two linear orderings that induce the same (or reverse) cyclic orders will produce the same 3-chain. This observation gives us a natural geometric interpretation which will be convenient later on.  Represent the 2-simplices in $\tau$ by a set of distinct points in on a circle consistent with the cyclic order induced by $\prec$. Connect two points by a segment if the corresponding 2-simplices have a 1-simplex in common and label this segment by the common 1-simplex. If the labels of a pair of crossing segments correspond to a pair of disjoint 1-simplices, then the join of these 1-simplices is a 3-simplex which contributes to $T_{(\tau, \prec)}$. (See Figure \ref{fig:3T}.)

\begin{figure}[ht]
\centering
\begin{tikzpicture}
\begin{scope}[yshift=3.9cm]
\coordinate (A) at (-30:2cm);
\coordinate (B) at (90:2cm);
\coordinate (C) at (210:2cm);
\coordinate (D) at (0.6,-.1);
\coordinate (E) at (-0.2,0.2);
\fill (A) circle (2pt);
\fill (B) circle (2pt);
\fill (C) circle (2pt);
\fill (D) circle (2pt);
\fill (E) circle (2pt);
\draw (A)--(B)--(C)--(A)--(D)--(E)--(B)--(D)--(C)--(E);
\node[right] at (A) {\footnotesize $1$};
\node[above] at (B) {\footnotesize $2$};
\node[left] at (C) {\footnotesize $3$};
\node[right] at (D) {\footnotesize $4$};
\node[left] at (E) {\footnotesize $5$};
\end{scope}

\begin{scope}[xshift = -4cm]
\def \r {1.65};
\coordinate (S1) at (10:\r cm);
\coordinate (S2) at (70:\r cm);
\coordinate (S3) at (130:\r cm);
\coordinate (S4) at (190:\r cm);
\coordinate (S5) at (250:\r cm);
\coordinate (S6) at (310:\r cm);
\coordinate (T1) at ($1.42*(S1)$);
\coordinate (T2) at ($1.2*(S2)$);
\coordinate (T3) at ($1.23*(S3)$);
\coordinate (T4) at ($1.42*(S4)$);
\coordinate (T5) at ($1.2*(S5)$);
\coordinate (T6) at ($1.24*(S6)$);

\node at (T1) {\footnotesize $\{1,2,3\}$};
\node at (T2) {\footnotesize $\{2,4,5\}$};
\node at (T3) {\footnotesize $\{3,4,5\}$};
\node at (T4) {\footnotesize $\{1,3,4\}$};
\node at (T5) {\footnotesize $\{2,3,5\}$};
\node at (T6) {\footnotesize $\{1,2,4\}$};

\fill (S1) circle (2pt);
\fill (S2) circle (2pt);
\fill (S3) circle (2pt);
\fill (S4) circle (2pt);
\fill (S5) circle (2pt);
\fill (S6) circle (2pt);

\draw[name path=S1--S4] (S1)--(S4); 
\draw[name path=S1--S5] (S1)--(S5); 
\draw (S1)--(S6); 
\draw (S2)--(S3);
\draw[name path=S2--S5] (S2)--(S5); 
\draw[name path=S2--S6] (S2)--(S6);
\draw (S3)--(S4); 
\draw[name path=S3--S5] (S3)--(S5);
\draw[name path=S4--S6] (S4)--(S6);

\coordinate (I1) at (intersection of S1--S4 and S2--S5);
\coordinate (I2) at (intersection of S1--S4 and S2--S6);
\coordinate (I3) at (intersection of S1--S5 and S4--S6);
\coordinate (I4) at (intersection of S3--S5 and S4--S6);
\coordinate (I5) at (intersection of S2--S5 and S4--S6);

\fill[magenta] (I1) circle (2.5pt); %1325
\fill[gray] (I2) circle (2.5pt); %1324
\fill[gray] (I3) circle (2.5pt); %2314
\fill[orange] (I4) circle (2.5pt); %1435 
\fill[cyan] (I5) circle (2.5pt); %1524 

\node at (0,-2.5) {$\prec_1$};
\end{scope}
%second
\begin{scope}[xshift = -4cm]
\def \r {1.65};
\coordinate (P2) at ($(S1)+(8,0)$);
\coordinate (P1) at ($(S2)+(8,0)$);
\coordinate (P3) at ($(S3)+(8,0)$);
\coordinate (P4) at ($(S4)+(8,0)$);
\coordinate (P5) at ($(S5)+(8,0)$);
\coordinate (P6) at ($(S6)+(8,0)$);
\coordinate (Q2) at ($(T1)+(8,0)$);
\coordinate (Q1) at ($(T2)+(8,0)$);
\coordinate (Q3) at ($(T3)+(8,0)$);
\coordinate (Q4) at ($(T4)+(8,0)$);
\coordinate (Q5) at ($(T5)+(8,0)$);
\coordinate (Q6) at ($(T6)+(8,0)$);

\node at (Q1) {\footnotesize $\{1,2,3\}$};
\node at (Q2) {\footnotesize $\{2,4,5\}$};
\node at (Q3) {\footnotesize $\{3,4,5\}$};
\node at (Q4) {\footnotesize $\{1,3,4\}$};
\node at (Q5) {\footnotesize $\{2,3,5\}$};
\node at (Q6) {\footnotesize $\{1,2,4\}$};

\fill (P1) circle (2pt);
\fill (P2) circle (2pt);
\fill (P3) circle (2pt);
\fill (P4) circle (2pt);
\fill (P5) circle (2pt);
\fill (P6) circle (2pt);

\draw[name path=P1--P4] (P1)--(P4); 
\draw[name path=P1--P5] (P1)--(P5); 
\draw[name path=P1--P6] (P1)--(P6); 
\draw[name path=P2--P3] (P2)--(P3);
\draw[name path=P2--P5] (P2)--(P5); 
\draw[name path=P2--P6] (P2)--(P6);
\draw[name path=P3--P4] (P3)--(P4); 
\draw[name path=P3--P5] (P3)--(P5);
\draw[name path=P4--P6] (P4)--(P6);

\coordinate (J0) at (intersection of P1--P4 and P2--P3);
\coordinate (J1) at (intersection of P1--P5 and P2--P3);
\coordinate (J2) at (intersection of P1--P6 and P2--P3);
\coordinate (J3) at (intersection of P1--P5 and P4--P6);
\coordinate (J4) at (intersection of P3--P5 and P4--P6);
\coordinate (J5) at (intersection of P2--P5 and P4--P6);

 \fill[orange] (J0) circle (2.5pt); % 1345
 \fill[green!80!black] (J1) circle (2.5pt); % 2345
 \fill[cyan] (J2) circle (2.5pt); % 1245
\fill[gray] (J3) circle (2.5pt); % 1234
\fill[orange] (J4) circle (2.5pt); %1345 
\fill[cyan] (J5) circle (2.5pt); %1245 

\node at (8,-2.5) {$\prec_2$};
\end{scope}

\begin{scope}[yshift = -4.1cm, xshift = -2cm, scale = .63]
\coordinate (V1) at (-.8,0);
\coordinate (V2) at (0.1,-.4);
\coordinate (V3) at (.6,.3);
\coordinate (V4) at (-.2,1);
\fill[orange, opacity = .2] (V1)--(V2)--(V4);
\fill[orange!80!black, opacity = .4] (V2)--(V3)--(V4);
\draw[line width = 0.1mm, orange] (V1)--(V2)--(V3)--(V4)--cycle; 
\draw[orange!70!black] (V1)--(V3);
\draw[thick, orange!70!black] (V2)--(V4);
\node[left] at (V1) {\footnotesize $1$};
\node[below] at (V2) {\footnotesize $3$};
\node[right] at (V3) {\footnotesize $4$};
\node[above] at (V4) {\footnotesize $5$};

\coordinate (A1) at ($(V1) + (4,0)$);
\coordinate (A2) at ($(V2) + (4,0)$);
\coordinate (A3) at ($(V3) + (4,0)$);
\coordinate (A4) at ($(V4) + (4,0)$);
\fill[cyan, opacity = .2] (A1)--(A2)--(A4);
\fill[cyan!80!black, opacity = .4] (A2)--(A3)--(A4);
\draw[line width = 0.1mm, cyan] (A1)--(A2)--(A3)--(A4)--cycle; 
\draw[cyan!70!black] (A1)--(A3);
\draw[thick, cyan!70!black] (A2)--(A4);
\node[left] at (A1) {\footnotesize $1$};
\node[below] at (A2) {\footnotesize $2$};
\node[right] at (A3) {\footnotesize $4$};
\node[above] at (A4) {\footnotesize $5$};

\coordinate (B1) at ($(A1) + (4,0)$);
\coordinate (B2) at ($(A2) + (4,0)$);
\coordinate (B3) at ($(A3) + (4,0)$);
\coordinate (B4) at ($(A4) + (4,0)$);
\fill[magenta, opacity = .2] (B1)--(B2)--(B4);
\fill[magenta!80!black, opacity = .4] (B2)--(B3)--(B4);
\draw[line width = 0.1mm, magenta] (B1)--(B2)--(B3)--(B4)--cycle; 
\draw[ magenta!70!black] (B1)--(B3);
\draw[thick, magenta!70!black] (B2)--(B4);
\node[left] at (B1) {\footnotesize $1$};
\node[below] at (B2) {\footnotesize $2$};
\node[right] at (B3) {\footnotesize $3$};
\node[above] at (B4) {\footnotesize $5$};

\node at (-3.1,.2) {$T_{(\tau, \prec_1)}$};
\node at (-1.8,.2) {$=$};
\node at (1.9,.2) {$+$};
\node at (5.9,.2) {$+$};
\end{scope}

\begin{scope}[yshift = -6.3cm, xshift = -1cm, scale=.63]
\coordinate (Y1) at (-.8,0);
\coordinate (Y2) at (0.1,-.4);
\coordinate (Y3) at (.6,.3);
\coordinate (Y4) at (-.2,1);
\fill[green, opacity = .2] (Y1)--(Y2)--(Y4);
\fill[green!80!black, opacity = .4] (Y2)--(Y3)--(Y4);
\draw[line width = 0.1mm, green] (Y1)--(Y2)--(Y3)--(Y4)--cycle; 
\draw[green!70!black] (Y1)--(Y3);
\draw[thick, green!70!black] (Y2)--(Y4);
\node[left] at (Y1) {\footnotesize $4$};
\node[below] at (Y2) {\footnotesize $3$};
\node[right] at (Y3) {\footnotesize $5$};
\node[above] at (Y4) {\footnotesize $2$};

\coordinate (Z1) at ($(Y1) + (4,0)$);
\coordinate (Z2) at ($(Y2) + (4,0)$);
\coordinate (Z3) at ($(Y3) + (4,0)$);
\coordinate (Z4) at ($(Y4) + (4,0)$);
\fill[gray, opacity = .2] (Z1)--(Z2)--(Z4);
\fill[gray!80!black, opacity = .4] (Z2)--(Z3)--(Z4);
\draw[line width = 0.1mm, gray] (Z1)--(Z2)--(Z3)--(Z4)--cycle; 
\draw[gray!70!black] (Z1)--(Z3);
\draw[thick, gray!70!black] (Z2)--(Z4);
\node[left] at (Z1) {\footnotesize $1$};
\node[below] at (Z2) {\footnotesize $2$};
\node[right] at (Z3) {\footnotesize $4$};
\node[above] at (Z4) {\footnotesize $3$};

\node at (-3.1,.2) {$T_{(\tau, \prec_2)}$};
\node at (-1.8,.2) {$=$};
\node at (1.9,.2) {$+$};
\end{scope}
\end{tikzpicture}
\caption{Two distinct orderings, $\prec_1$ and $\prec_2$, of the 2-simplices of a triangulation of the 2-sphere, and the resulting 3-chains, $T_{(\tau,\prec_1)}$ and $T_{(\tau,\prec_2)}$. \label{fig:3T}}
\end{figure}
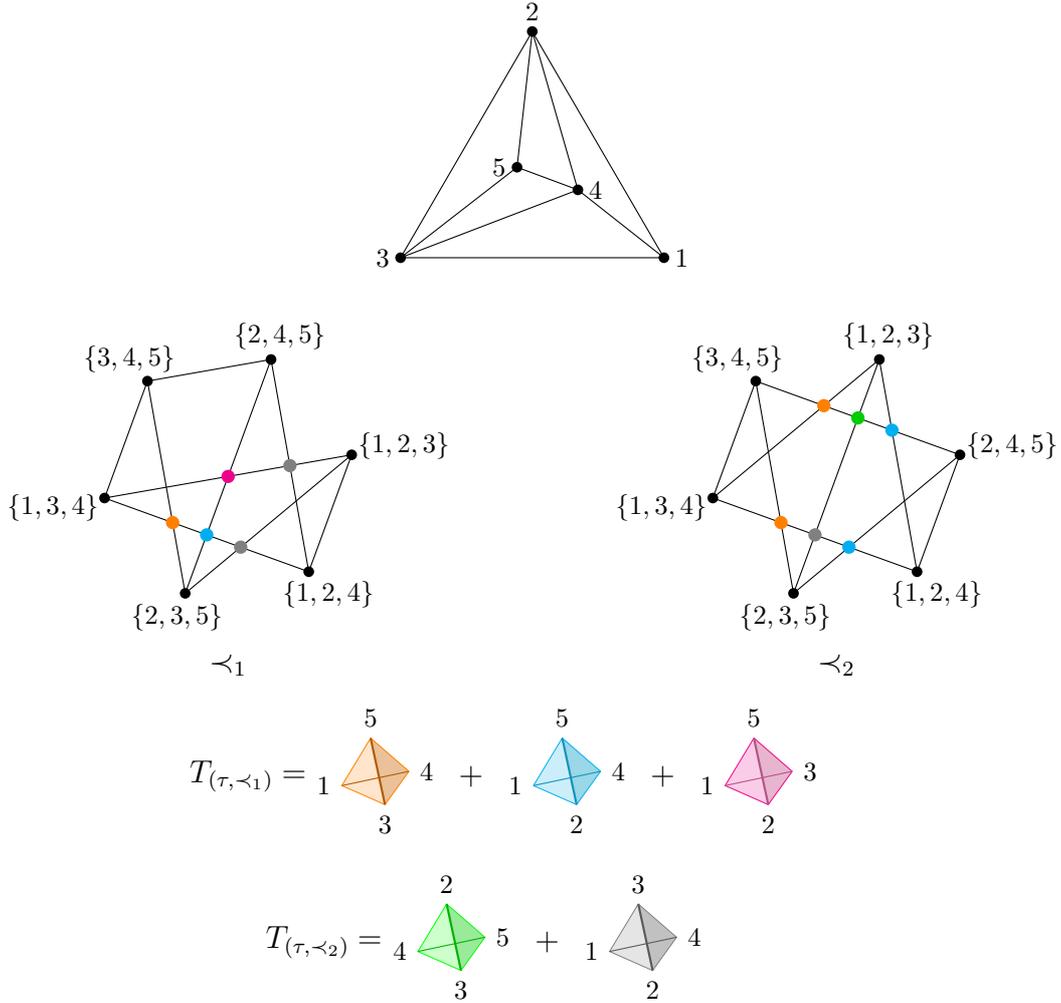
\end{remark}

\begin{lem} \label{isoperm}
Let $\tau$ be a simplicial 2-cycle and $\prec$ a linear ordering of the simplices in $\tau$. Then $\partial T_{(\tau, \prec)} = \tau$.
\end{lem}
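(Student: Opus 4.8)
The plan is to establish the identity coefficient-by-coefficient over $\mathbb{Z}_2$: for every $2$-simplex $\rho$ I will compute the coefficient of $\rho$ in $\partial T_{(\tau,\prec)}$, denote it $[\rho]\,\partial T_{(\tau,\prec)}$, and show that it equals the coefficient of $\rho$ in $\tau=\tau_1+\cdots+\tau_n$ (the $\tau_i$ being distinct after cancellation), i.e.\ $1$ if $\rho\in\{\tau_1,\dots,\tau_n\}$ and $0$ otherwise. Since the boundary of a $3$-simplex is the sum of its four facets, the first step is the observation that
\[ [\rho]\,\partial T_{(\tau,\prec)}\;\equiv\;\#\bigl\{(i,j,k,l): \tau_i\prec\tau_j\prec\tau_k\prec\tau_l,\ \rho\subset\eta_{(\tau_i\prec\tau_j\prec\tau_k\prec\tau_l)}\bigr\}\pmod 2, \]
so the whole problem becomes a counting problem about ordered $4$-tuples whose associated $3$-simplex contains $\rho$.

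Next I would unpack this condition. If $\rho\subset\eta_{(\tau_i\prec\tau_j\prec\tau_k\prec\tau_l)}$, then that $\eta$ is a genuine $3$-simplex, so $\tau_i\cap\tau_k$ and $\tau_j\cap\tau_l$ are disjoint edges whose union is $\rho\cup\{d\}$ for the unique vertex $d\notin\rho$ it contains; and such a pair of disjoint edges necessarily consists of one edge $e$ of $\rho$ together with $\{d,v_e\}$, where $v_e$ is the vertex of $\rho$ opposite $e$. Introduce, for each edge $g$, the set $A_g=\{m: g\subset\tau_m\}$ (identifying $\prec$ with $1<2<\cdots<n$); the key structural input is that $\partial\tau=0$ forces $|A_g|$ to be even for every $g$. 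Using that $\tau_i\cap\tau_k=g$ is the same as $\{i,k\}\in\tbinom{A_g}{2}$ when $i\neq k$, and grouping the $4$-tuples by the resulting pair $(e,d)$ (which is determined by the $4$-tuple), one gets
\[ [\rho]\,\partial T_{(\tau,\prec)}\;\equiv\;\sum_{e\in E(\rho)}\;\sum_{d\notin\rho}\;\#\bigl\{\text{interleaving pairs }(S,S')\in\tbinom{A_e}{2}\times\tbinom{A_{\{d,v_e\}}}{2}\bigr\}\pmod 2, \]
where $S=\{i<k\}$ and $S'=\{j<l\}$ \emph{interleave} if $i<j<k<l$ or $j<i<l<k$.

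The technical core is a mod-$2$ evaluation of this inner count: if $P,Q$ are disjoint sets of even cardinality, the number of interleaving pairs in $\tbinom P2\times\tbinom Q2$ is congruent mod $2$ to $\#\{(p,q)\in P\times Q:p<q\}$. Indeed, fixing $S=\{i<k\}\subset P$, the number of interleaving $S'$ is $b(|Q|-b)$ with $b=|Q\cap(i,k)|$, which is $\equiv b$ because $|Q|$ is even; summing over $S$ turns this into $\sum_{q\in Q}|P\cap\{1,\dots,q-1\}|\cdot|P\cap\{q+1,\dots,n\}|$, and using that $|P|$ is even once more reduces it to $\sum_{q\in Q}|P\cap\{1,\dots,q-1\}|=\#\{(p,q)\in P\times Q:p<q\}$. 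This is precisely where the choice of $\mathbb{Z}_2$-coefficients is used in an essential way.

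Applying this with $P=A_e$, $Q=A_{\{d,v_e\}}$ (disjoint since $e$ and $\{d,v_e\}$ are disjoint edges, both of even size), and then carrying out the remaining bookkeeping — for fixed $e$ and $q$ the number of $d\notin\rho$ with $q\in A_{\{d,v_e\}}$ is $\mathbf 1[v_e\in\tau_q]\,|\tau_q\setminus\rho|$, and $|\tau_q\setminus\rho|\equiv\mathbf 1[\,|\tau_q\cap\rho|=2\,]$ whenever $v_e\in\tau_q$ — collapses $[\rho]\,\partial T_{(\tau,\prec)}$ into a sum, over the three edge-pairs $\{\epsilon,\delta\}$ of $\rho$, of a symmetric ``crossing'' count built from $A_\epsilon$ and $A_\delta$. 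If $\rho\notin\{\tau_1,\dots,\tau_n\}$, the sets $A_\alpha,A_\beta,A_\gamma$ attached to the three edges of $\rho$ are pairwise disjoint and each such count reduces to the even number $|A_\epsilon|\,|A_\delta|\equiv 0$, so the total is $0$. If $\rho=\tau_r$, then $r$ lies in all three $A_\epsilon$, each $|A_\epsilon\setminus\{r\}|$ is odd, the sets $A_\epsilon\setminus\{r\}$ are pairwise disjoint, and each of the three edge-pairs contributes an odd number together with terms of the form $|A_\epsilon\cap\{1,\dots,r-1\}|+|A_\delta\cap\{1,\dots,r-1\}|$, which cancel in pairs when summed over the three edge-pairs (each $|A_\mu\cap\{1,\dots,r-1\}|$ occurring exactly twice) — leaving $3\equiv 1\pmod 2$. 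Hence $[\rho]\,\partial T_{(\tau,\prec)}=[\rho]\,\tau$ for every $\rho$, which is the lemma. I expect the main obstacle to be exactly the two ingredients just described: the mod-$2$ interleaving identity, and the care needed in the final bookkeeping to track the single distinguished simplex $\tau_r=\rho$; everything else is lengthy but routine accounting.
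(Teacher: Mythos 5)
Your proof is correct, but it takes a genuinely different route from the paper's. The paper first proves that $\partial T_{(\tau,\prec)}$ is independent of $\prec$ by comparing two orderings that differ by an adjacent transposition (with a case analysis on $|\tau_1\cup\tau_2|\in\{4,5,6\}$), and then evaluates the coefficient of a fixed triple $\{1,2,3\}$ only for a specially chosen lexicographic ordering, where the contributing $4$-tuples can be pinned down explicitly. You instead compute the coefficient of an arbitrary $2$-simplex $\rho$ directly for an arbitrary ordering, reducing it to a count of interleaving pairs in $\tbinom{A_e}{2}\times\tbinom{A_{\{d,v_e\}}}{2}$ and then exploiting, several times over, the parity identity $b(|Q|-b)\equiv b \pmod 2$ for $|Q|$ even — which is where the hypothesis $\partial\tau=0$ (all $|A_g|$ even) and the $\mathbb{Z}_2$-coefficients enter, just as they do in the paper's transposition argument. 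I checked the key steps: the unique decomposition of a contributing $4$-tuple into an edge $e\subset\rho$ and a vertex $d\notin\rho$ (using that the $\tau_i$ are distinct, so $\tau_{m}\cap\tau_{m'}$ is exactly the common edge), the disjointness and evenness of $A_e$ and $A_{\{d,v_e\}}$, the mod-$2$ interleaving identity, and the final cancellation over the three edge-pairs of $\rho$ (each boundary term $|A_\mu\cap\{1,\dots,r-1\}|$ occurring twice, leaving $3\equiv 1$) — all are sound. Your approach buys uniformity (no invariance lemma, no privileged ordering) at the cost of heavier bookkeeping; the paper's approach isolates the $\prec$-invariance as a reusable structural fact and keeps the final count short by a clever choice of order. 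Either argument is a complete proof of the lemma.
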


\begin{proof} We first show that $\partial T_{(\tau, \prec)}$ is $\prec$-invariant. In other words, for any two linear orderings $\prec_1$ and $\prec_2$ we want to show that $\partial(T_{(\tau,\prec_1)} + T_{(\tau,\prec_2)}) = 0$. It is sufficient to show this in the case when $\prec_1$ and $\prec_2$ differ by a simple transposition of a pair consecutive simplices $\tau_1$ and $\tau_{2}$. Observe that if $\eta_*$ is a 3-simplex in $T_{(\tau,\prec_1)} + T_{(\tau,\prec_2)}$, then $\eta_*$ must be the join of a 1-simplex from $\tau_1$ and a 1-simplex from $\tau_2$, because $\tau_1$ and $\tau_2$ are consecutive in $\prec_1$ and $\prec_2$ (but in opposite order). We claim that the converse also holds; for any $\tau_i$ and $\tau_j$ such that $\tau_1\cap \tau_i$ and $\tau_2\cap \tau_j$ are a pair of disjoint 1-simplices, their join $\eta_* = (\tau_1\cap\tau_i) \cup (\tau_2\cap \tau_j)$ is in either $T_{(\tau,\prec_1)}$ or $T_{(\tau,\prec_2)}$, but not in both. This is easily seen by considering the geometric interpretation given in Remark \ref{geom int}; if two segments cross, then they do not cross after transposing a pair of consecutive endpoints (and vice versa). We now distinguish cases according to the cardinality of $\tau_1\cup \tau_2$.

\medskip

\noindent {\bf Case 1:} $|\tau_1\cup \tau_2| = 4$. For convenience we assume $\tau_1 = \{1,2,3\}$ and $\tau_2 = \{2,3,4\}$.  In this case, the only 3-simplex $\eta_*$ that can arise as the join of a 1-simplex from $\tau_1$ and a 1-simplex from $\tau_2$, is the 3-simplex $\{1,2,3,4\}$. It can arise in two possible ways:
\[\eta_* = \{1,2\} \cup \{3,4\} \;\; \text{ or } \;\; \eta_* = \{1,3\}\cup\{2,4\}.\]
Since $\tau$ is a 2-cycle, we have an odd number of 2-simplices $\tau_i = \{1,2,x\}$ ($x\neq 3$) and an odd number of 2-simplices $\tau_j = \{3,4,y\}$ ($y\neq 2$). Since every possible combination contributes to $T_{(\tau,\prec_1)}+T_{(\tau,\prec_2)}$, the number of contributions of the form $\{1,2\}\cup\{3,4\}$ is odd. The same argument shows that the number of contributions of the form $\{1,3\}\cup \{2,4\}$ is also odd. In total we get an even number of contributions of the 3-simplex $\{1,2,3,4\}$. This means $T_{(\tau,\prec_1)}+T_{(\tau,\prec_2)} = 0$ and in particular $\partial(T_{(\tau,\prec_1)}+T_{(\tau,\prec_2)}) = 0$.

\medskip

\noindent {\bf Case 2:} $|\tau_1\cup \tau_2| = 5$. We may assume that $\tau_1 = \{1,2,3\}$ and $\tau_2 = \{3,4,5\}$. In this case, there are five distinct pairs of disjoint 1-simplices whose joins contribute to $T_{(\tau,\prec_1)}+T_{(\tau,\prec_2)}$. These are
\[\{1,2\}\cup \{3,4\} \; , \; \{1,2\}\cup \{3,5\} \; , \; \{1,2\}\cup\{4,5\}\; , \;\{1,3\}\cup \{4,5\}\; , \;\{2,3\}\cup \{4,5\}. \]
Since $\tau$ is a 2-cycle, the same argument as in {\bf Case 1}, shows that the number of contributions to
$T_{(\tau,\prec_1)}+T_{(\tau,\prec_2)}$ is odd, for each of these 3-simplices. Therefore $T_{(\tau,\prec_1)}+T_{(\tau,\prec_2)}$ equals the sum of the 3-simplices listed above, and it is straightforward to check that $\partial(T_{(\tau,\prec_1)}+T_{(\tau,\prec_2)}) = 0$.

\medskip

\noindent {\bf Case 3:} $|\tau_1\cup \tau_2| = 6$. In this case, $\tau_1$ and $\tau_2$ are disjoint, and the join of any 1-simplex from $\tau_1$ and any 1-simplex from $\tau_2$ will contribute an odd number of times to $T_{(\tau,\prec_1)}+T_{(\tau,\prec_2)}$ (again by the assumption that $\tau$ is a 2-cycle). Thus, $T_{(\tau,\prec_1)}+T_{(\tau,\prec_2)}$ is the sum of all 3-simplices in the join of $\partial \tau_1$ and $\partial \tau_2$ (which is homeomorphic to $\mathbb{S}^3$), and again we see that $\partial(T_{(\tau,\prec_1)}+T_{(\tau,\prec_2)}) = 0$.
  
\medskip

% COMMENT 8: \prec-invariance is quite subtle and it deserves more words...
It remains to show that $\partial T_{(\tau, \prec)} = \tau$ for every linear order $\prec$ of the $2$-simplices in $\tau$. 
% CHANGE 8
By the $\prec$-invariance, it is sufficient to find for each triple $\{a,b,c\}$ of $0$-simplices an order $\prec'$ such that $\{a, b, c\}$ is contained in an odd number of $3$-simplices in $T_{(\tau, \prec')}$ if and only if $\{a, b, c\}$ is a $2$-simplex in $\tau$.
To do this, we label and order the 0-simplices by the positive integers so that $\{a,b,c\} = \{1,2,3\}$, and let $\prec_\text{lex}$ denote the lexicographic order on the triples of $0$-simplices. 
This induces a linear ordering of the 2-simplices in $\tau$. 
We will show that the triple $\{1,2,3\}$ is contained in an odd number of 3-simplices in $T_{(\tau, \prec_\text{lex})}$ if and only if $\{1,2,3\}$ is a 2-simplex in $\tau$. 
%By the $\prec$-invariance, this implies that $\partial T_{(\tau, \prec)} = \tau$. 

We first show that if there is a 3-simplex $\{1, 2, 3, k\}$ in $T_{(\tau, \prec_\text{lex})}$ for some $k>3$, then $\{1,2,3\}$ and $\{1,2,k\}$ are 2-simplices in $\tau$. To see this, we observe that there must exist 2-simplices $\tau_1\prec_\text{lex} \tau_2 \prec_\text{lex} \tau_3 \prec_\text{lex} \tau_4$ in $\tau$ such that \[(\tau_1\cap\tau_3)\cup (\tau_2\cap\tau_4) = \{1,2,3,k\}.\] The 3-simplex $\{1,2,3,k\}$ could arise as a join of 1-simplices in the following three combinations,
\[\{1,2\}\cup \{3,k\} \; , \; \{1,3\}\cup \{2,k\} \; , \; \{1,k\}\cup\{2,3\}. \]
However, a straight-forward case analysis (left to the reader) shows that the only combination which can be consistent with the ordering $\prec_\text{lex}$ is if we have
\[\tau_1\cap \tau_3 = \{1,3\} \; \text{ and } \; \tau_2\cap\tau_4 = \{2,k\},\]
and consequently we must have
\[\tau_1 = \{1,2,3\}  , \;
\tau_2 = \{1,2,k\}  , \;\tau_3 = \{1,3,l\}  , \;\tau_4 = \{2,k,m\},  \]
for some $k,l>3$ and $m \geq 3$. This proves the claim that $\{1,2,3\}$ and $\{1,2,k\}$ are both 2-simplices in $\tau$. Note that this also implies that if $\{1,2,3\}$ is not a 2-simplex in $\tau$, then there are no 3-simplices in $T_{(\tau,\prec)}$ which contain the triple $\{1,2,3\}$.

Conversely, for fixed $k>3$, we claim that if $\{1,2,3\}$ and $\{1,2,k\}$ are 2-simplices in $\tau$, then the 3-simplex $\{1,2,3,k\}$ appears an odd number of times in $T_{(\tau, \prec_\text{lex})}$. Since $\tau$ is a 2-cycle, we have an even number of  2-simplices which contain the pair $\{1,3\}$, and an odd number of them have the form $\{1,3,l\}$ with $l>3$. For the same reason, we have an odd number of 2-simplices which have the form $\{2,k, m\}$ with $m\geq 3$, and the claim follows. 

Finally, suppose $\{1,2,3\}$ is a 2-simplex in $\tau$. Since $\tau$ is a 2-cycle, we have an odd number of 2-simplices of the form $\{1,2,k\}$ with $k>3$. Therefore we find an odd number of distinct integers $k>3$ such that the 3-simplex $\{1,2,3,k\}$ is in $T_{(\tau, \prec_\text{lex})}$.
\end{proof}

\subsection{Proof of Theorem \ref{3homol}} Suppose there exists a counterexample, that is, a connected cover $\mathcal F$ of a planar graph $G$ such that $\tilde{H}_3(N(\mathcal{F}))\neq 0$. By Lemma \ref{2connected} we may assume that $\mathcal{F}$ is a 2-connected cover of $G$. Clearly we must have $|\mathcal{F}|\geq 5$, and we will assume that $|\mathcal{F}|$ is minimal in the following sense: if $\mathcal{H}$ is a connected cover of a planar graph and $|\mathcal{H}|<|\mathcal{F}|$, then $\tilde{H}_3(N(\mathcal{H})) = 0$. (Note that $|\mathcal{F}|$ does not change when we apply Lemma \ref{2connected}.)

Fix an embedding of $G$ in $\mathbb{S}^2$ and consider a member $G_x\in \mathcal{F}$. If every face of $G_x$ is a face of $G$, then $G_x = G$. But this would imply that $N(\mathcal{F})$ is a cone, which is contractible. We may therefore assume that $G_x$ has a face $f$ which is not a face in $G$. Now we fill the face $f$ in $G_x$ to obtain a new family $\mathcal{F}'$. It follows by definition that $|\mathcal{F}'| = |\mathcal{F}|$, and by Lemma \ref{filling is connected} that $\mathcal{F}'$ is a 2-connected cover of $G$. Our goal is to show that $\tilde{H}_3(N(\mathcal{F}'))\neq 0$, and consequently, that $\mathcal{F}'$ is also a minimal counterexample.

This will complete the proof, because we may continue the process of filling faces of $G_x$ while maintaining the property of being a minimal counterexample. At each step $G_x$ will have more vertices than before, and the process terminates when there are no more faces of $G_x$ to fill, in which case $G_x =G$. But then the nerve is contractible, giving us a contradiction.

\medskip

Now to show that $\tilde{H}_3(N(\mathcal{F}'))\neq 0$, let $U$ be the subset of vertices of $G$ which are in the interior of $f$ and recall that there is an inclusion $N(\mathcal{F}) \subset N(\mathcal{F}')$. Observe that a simplex $\sigma'$ is in $N(\mathcal{F}')\setminus N(\mathcal{F})$ if and only if $\sigma' = \sigma \cup \{x\}$ and the vertices of $G_\sigma$ are contained in $U$. Consequently, we have $\sigma \in N(\mathcal{F}) -x$. We define the subset 
\[\Delta  = \{ \sigma \in N(\FF) :   V(G_\sigma)\subset U\},\]
which is a subset of simplices of $N(\FF)-x$. Closing $\Delta$ downwards we obtain a simplicial complex 
\[L = \{\tau : \tau \subset \sigma \in \Delta\} \subset N(\FF)-x,\]
and if we let $K = N(\FF)$, then we have $N(\FF') = K_{(x,L)}$.

\begin{comment}
This means that if we let $K= N(\mathcal{F})$, then $N(\mathcal{F}') = K_{(x,L)}$ for some subcomplex $L\subset K-x$. The subcomplex $L$ is not uniquely defined, but now we define the specific subcomplex $L$ that will be used. 

Let $G'_x$ denote the graph obtained from $G_x$ by filling the face $f$. Consider the family $\mathcal H$ of induced subgraphs of $G_x'$ defined as
\[\mathcal{H} = \{G_k\cap G_x' : G_k\in \mathcal{F}\setminus\{G_x\}, G_k\cap U \neq \emptyset\},\]
and let $L = N(\mathcal{H})$. Since each member of $\mathcal H$ is naturally included in a member of $\mathcal{F}\setminus \{G_x\}$, we have a natural inclusion $L\subset K-x$. Observe that for any non-empty simplex $\sigma \in L$, either $\cap_{k\in\sigma}(G_k\cap G_x')$ contains a vertex of $G_x$ or the vertices of $\cap_{k\in\sigma}(G_k\cap G_x')$ are contained in $U$. Moreover, for any simplex $\sigma\in K$ such that the vertices of $\cap_{k\in\sigma}G_k$ are in $U$, we have $\sigma \in L$. Therefore $N(\mathcal{F}') = K_{(x,L)}$. A final observation is that $\mathcal H$ is a connected cover of a planar graph, so by the minimality of $|\mathcal{F}|$ we have $\tilde{H}_3(L)=0$.
\end{comment}
\medskip

Now suppose $\tilde{H}_3(K_{(x,L)}) = 0$.  Then there exists an $(x,L)$-critical 3-cycle $\gamma$, and by Lemma \ref{critical} we may assume $\gamma = \beta + [x, \partial\beta]$
where 
\begin{enumerate}
\item\label{one} $\beta = \sum\sigma_i \in C_3(L) \subset C_3(K-x)$, 
\item\label{three} $\sigma_i \cup \{x\} \in K_{(x,L)}\setminus K$ for all $i$, and
\item\label{two} $[x,\partial\beta] \in C_3(K)$.
\end{enumerate}

Let us interpret these conditions in terms of the connected cover. Conditions \eqref{one} and \eqref{three} imply that for every 3-simplex $\sigma_i$ in $\beta$, the vertices of $G_{\sigma_i}$ are contained in $U$. Condition \eqref{two} implies that for every 2-simplex $\tau_i$ in $\partial \beta$, we have $G_{\tau_i} \cap G_x\neq\emptyset$, and therefore the graph $G_{\tau_i}$ must contain vertices in $U$ and vertices in $G_x$. 

Since $G_x$ is 2-connected, the face $f$ is bounded by a cycle $S$. Therefore, for any 2-simplex $\tau_i$ in $\partial\beta$, the connected graph $G_{\tau_i}$ contains at least one vertex of $S$. So for every $\tau_i$ in $\partial \beta$ we may assign a vertex $v_{\tau_i}\in S\cap G_{\tau_i}$ (chosen arbitrarily). Note that distinct 2-simplices $\tau_{i}$ and $\tau_{j}$ could be assigned to the same vertex of $S$.

By orienting the cycle $S$, we may define a linear ordering $\prec$ on the 2-simplices of $\partial \beta$ as follows. Choose an arbitrary ``starting vertex'' and an orientation of the cycle $S$ to obtain a linear ordering $\prec_S$ of the vertices in $S$. The linear ordering $\prec_S$ induces a partial ordering $\prec'$ of the 2-simplices of $\partial \beta$ by setting $\tau_{i} \prec' \tau_{j}$ if and only if $v_{\tau_i} \prec_S v_{\tau_j}$. Finally, let $\prec$ be an arbitrary linear extension of $\prec'$.

Since $\partial \beta$ is a simplicial 2-cycle with a linear ordering $\prec$ of its simplices, we get a simplicial 3-chain $T = T_{(\partial \beta, \prec)}$ as defined in section \ref{boundary identity}. We claim that 
\begin{enumerate} \setcounter{enumi}{3}
\item\label{four} $T\in C_3(K-x)$, and
\item\label{five} $[x,T] \in C_4(K)$.
\end{enumerate}
Since $\partial\beta\in Z_2(K-x)$ it follows by the definition of $T_{(\partial\beta, \prec)}$ that none of the simplices in $T$ contain the vertex $x$. Therefore statement \eqref{four} follows as a consequence of statement \eqref{five}.

To see why statement \eqref{five} holds, consider a 3-simplex $\sigma = \{a,b,c,d\} \in T$. We want to show that $\sigma\cup \{x\}\in K$, or equivalently, $G_\sigma\cap G_x \neq \emptyset$.
The contribution of $\sigma$ to $T$ comes from an ordered sequence of 2-simplices in $\partial\beta$,
\[\tau_1 \prec \tau_2 \prec \tau_3 \prec \tau_4,\]
where $\{a,b\}\in \tau_1\cap \tau_3$ and $\{c,d\}\in \tau_2\cap \tau_4$. Suppose some $\tau_i$ and $\tau_{i+1}$ are assigned to the same vertex of $S$, that is, $v_{\tau_i} = v_{\tau_{i+1}}$ (where subindices are taken modulo 4). Since every vertex of $S$ belongs to $G_x$ and $\sigma \subset \tau_i\cup \tau_{i+1}$, this would imply that $G_\sigma\cap G_x \neq\emptyset$, in which case we are done. So we may assume that the $\tau_i$ are assigned to distinct vertices of $S$. 

The vertices $v_{\tau_1}$ and $v_{\tau_3}$ are both contained in $G_{\{a,b,x\}}$. Since $\mathcal{F}$ is a connected cover there exists a path $\pi_{ab}$ connecting $v_{\tau_1}$ to $v_{\tau_3}$ which is contained in the graph $G_{\{a,b,x\}}$. Similarly, there is a path $\pi_{cd}$ connecting $v_{\tau_2}$ to $v_{\tau_4}$ contained in the graph $G_{\{c,d,x\}}$. Note that the paths $\pi_{ab}$ and $\pi_{cd}$ do not enter the interior of the face $f$. So due to the ordering of their endpoints along the cycle $S$ and the planarity of $G$, the paths $\pi_{ab}$ and $\pi_{cd}$ must cross at a vertex which belongs to $G_\sigma \cap G_x$. This completes the proof of statement \eqref{five}. (See Figure \ref{fig:crossings}.)

\begin{figure}[ht]
\centering
\begin{tikzpicture}
\begin{scope}
\coordinate (v0) at (190:2.1cm);
\coordinate (v1) at (160:2.5cm);
\coordinate (v2) at (140:2.3cm);
\coordinate (v3) at (110:2cm);
\coordinate (v4) at (80:1.8cm);
\coordinate (v5) at (50:1.6cm);
\coordinate (v6) at (20:1.7cm);
\coordinate (v7) at (-10:1.5cm);
\coordinate (v8) at (-50:1.3cm);
\coordinate (v9) at (-80:1.1cm);
\coordinate (v10) at (-140:1.4cm);
\draw[white] (-4.5,0)--(-3,1);
\draw[violet] (v2) ..controls (-2,3) and (2,3) .. (v5);
\draw[teal] (v3) ..controls (2,3) and (3.4,-1) .. (v8);
\fill (v1) circle (2pt);
\fill (v2) circle (2pt);
\fill (v3) circle (2pt);
\fill (v4) circle (2pt);
\fill (v5) circle (2pt);
\fill (v6) circle (2pt);
\fill (v7) circle (2pt);
\fill (v8) circle (2pt);
\fill (v9) circle (2pt);
\draw[dotted] (v1)--(v0)--(v10)--(v9);
\draw (v1)--(v2)--(v3)--(v4)--(v5)--(v6)--(v7)--(v8)--(v9);
\node at (-.4,.5) {\footnotesize $f$};
\node[below] at ($(v2)+(0.2,0)$) {\footnotesize $v_{\tau_1}$};
\node[below] at ($(v3)+(0.1,0)$) {\footnotesize $v_{\tau_2}$};
\node[left] at ($(v5)+(0,-0.2)$) {\footnotesize $v_{\tau_3}$};
\node[above] at ($(v8)+(-0.2,0)$) {\footnotesize $v_{\tau_4}$};
\node at (-1.8,2.4) {\footnotesize $\pi_{ab}$};
\node at (2.1,1.4) {\footnotesize $\pi_{cd}$};
\end{scope}
\end{tikzpicture}
\caption{The paths $\pi_{ab}$ and $\pi_{cd}$ cross at a vertex in $(\cap_{k\in \sigma}G_k)\cap G_x$. \label{fig:crossings}}
\end{figure}
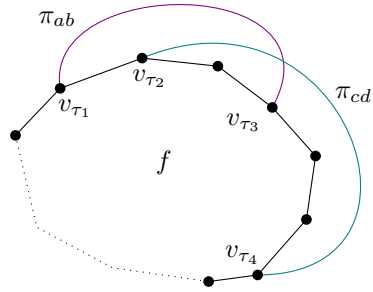

\medskip

Now we can finish the proof of Theorem \ref{3homol}. By
Lemma \ref{isoperm} we have $\partial T = \partial \beta$, so by statements \eqref{one} and \eqref{four} we have 
\[\beta + T \in Z_3(K-x).\]
Note that $K-x = N(\FF\setminus\{G_x\})$, so by the minimality of $|\FF|$ we have 
\[\beta = T+ \varphi\]
for some $\varphi \in B_3(K-x) \subset B_3(K)$. 

By the ``product rule'' we have
\[\partial [x,T] = T + [x,\partial T] = T + [x,\partial \beta],\]
which gives us
\[\gamma = \beta + [x,\partial\beta] = \partial[x,T] + \varphi \in B_3(K),\]
by statement \eqref{five}.
This contradicts the assumption that $\gamma$ is an $(x,L)$-critical 3-cycle and shows that $\tilde{H}_3(N(\mathcal{F}')) \neq 0$. \qed

\section{Helly-type theorems for connected covers} \label{helly-thms}

\subsection{Helly's theorem}
Let $\FF = \{S_1, \dots, S_n\}$ be a family of sets. The {\bf \em Helly number} of $\FF$ is defined as the greatest integer $m = h(\FF)$ for which there exists a subfamily ${\mathcal G} \subset \FF$ of cardinality $m$ such that every proper subfamily of $\mathcal{G}$ is intersecting and $\bigcap_{S\in \mathcal G} S = \emptyset$. Helly's classical theorem \cite{helly23} asserts that finite families of convex sets in $\mathbb{R}^d$ have Helly number at most $d+1$.

When we are dealing with connected covers in graphs, the following theorem shows that the role of the dimension in Helly's theorem is replaced by the size of the largest complete minor. 

\begin{thm}\label{hely}
Let $\FF$ be a connected cover in a graph $G$. If $G$ does not have a $K_r$ minor, then $h(\FF) < r$.
\end{thm}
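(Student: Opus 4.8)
The plan is to show the contrapositive in the following form: if $h(\FF) \geq r$, then $K_r \prec G$. Suppose $\mathcal{G} = \{G_1, \dots, G_r\} \subset \FF$ is a subfamily witnessing $h(\FF) \geq r$, so that $\bigcap_{i \in [r]} V(G_i) = \emptyset$ while every proper subfamily of $\mathcal{G}$ has non-empty intersection. In particular, for each $i \in [r]$ the set $\sigma_i = [r] \setminus \{i\}$ lies in $N(\mathcal{G})$, so $G_{\sigma_i} = \bigcap_{j \neq i} G_j$ is a non-empty connected induced subgraph of $G$; call it $H_i$. The first observation is that the $H_i$ need not be pairwise disjoint, so they do not directly form a $K_r$-minor model. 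However, the nerve $N(\mathcal{G})$ contains the full boundary of the $(r-1)$-simplex (every $(r-1)$-subset is a face, since every proper subfamily of $\mathcal{G}$ is intersecting), but does not contain the top face $[r]$ itself. Since the boundary of the $(r-1)$-simplex has $\tilde{H}_{r-2} \neq 0$, we get a connected cover of $G$ (after restricting to the component containing these subgraphs) whose nerve has non-vanishing homology in dimension $r-2$, hence $\gamma(G) \geq r-2$.

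The remaining task is to promote $\gamma(G) \geq r-2$ to $K_r \prec G$. This is exactly Conjecture \ref{minor-nerve}, which is not known in general — so this naive route cannot be the intended proof, and the theorem as stated must be proven by a direct combinatorial argument rather than via $\gamma$. Reconsidering: I would instead build the $K_r$-minor model directly from the $H_i$. For each pair $i \neq j$, the set $[r] \setminus \{i,j\} \cup \{i\}$... more usefully, note $H_i \cap H_j = \bigcap_{k \in [r] \setminus \{i\}} G_k \cap \bigcap_{k \in [r]\setminus\{j\}} G_k = \bigcap_{k \neq i} G_k$ — wait, this equals $H_i$ when... Let me restart the pairing: $\bigcap_{k \notin \{i,j\}} G_k = G_{[r]\setminus\{i,j\}}$ is non-empty and connected (it is an intersection over a face of $N(\mathcal{G})$), and it contains both $H_i$ and $H_j$. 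The key point is that $H_i = G_{[r]\setminus\{i\}}$ and $H_j = G_{[r]\setminus\{j\}}$ are both induced connected subgraphs of the connected graph $W_{ij} := G_{[r]\setminus\{i,j\}}$, and $H_i \cap H_j = G_{[r]} = \emptyset$, so within $W_{ij}$ the disjoint connected sets $V(H_i)$ and $V(H_j)$ must be joined by a path in $W_{ij}$; pick a shortest such path $P_{ij}$ and let $Q_{ij}$ be its interior vertices (those lying outside $V(H_i) \cup V(H_j)$).

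The plan for the minor model is then: take $B_i := V(H_i)$ as the branch set for vertex $i$, and route the connections using the paths $P_{ij}$, assigning the interior $Q_{ij}$ of each connecting path to one of the two endpoints' branch sets (say the one with smaller index), after first choosing the paths carefully so that distinct $Q_{ij}$, $Q_{kl}$ are vertex-disjoint and a $Q_{ij}$ avoids every $B_k$ with $k \notin \{i,j\}$. The main obstacle — and where real work is needed — is precisely arranging this disjointness: a priori the connecting paths $P_{ij}$ can pass through other branch sets $B_k$ or through each other. The resolution should exploit that $B_k \subset W_{ij}$ fails in general (indeed $B_k = G_{[r]\setminus\{k\}}$ need not lie in $G_{[r]\setminus\{i,j\}}$), together with an inductive/greedy argument: process the pairs in some order, and when routing $P_{ij}$, reroute through already-constructed branch sets whenever the path meets them (since those are connected, one can short-cut across them), charging the detour to that branch set. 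This "absorb detours into connected branch sets" technique is the heart of the argument; once disjointness is secured, the branch sets are connected (each is $H_i$ plus some attached paths, all connected) and pairwise adjacent by construction, giving $K_r \prec G$ and hence $h(\FF) < r$ whenever $K_r \not\prec G$.
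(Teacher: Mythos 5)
Your final construction coincides with the paper's (branch sets $H_i = G_{[r]\setminus\{i\}}$, shortest connecting paths $P_{ij}$ inside $G_{[r]\setminus\{i,j\}}$, interiors of paths absorbed into one endpoint's branch set), and you are right to abandon the detour through $\gamma(G)\geq r-2$, which would require Conjecture \ref{minor-nerve}. But the proposal stops exactly where the proof happens: you declare the pairwise disjointness of the pieces to be ``the main obstacle'' requiring ``real work,'' and then offer only an unexecuted plan (a greedy/inductive rerouting that ``absorbs detours into connected branch sets''). That step is never carried out, so the argument is incomplete as written; worse, the plan you sketch is off-track, because no rerouting or careful ordering of the pairs is needed at all.

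The disjointness is automatic from the nerve structure, and this is the entire content of the paper's proof. Writing $Q_{ij}$ for the set of interior vertices of $P_{ij}$ (those outside $V(H_i)\cup V(H_j)$, which is all interior vertices by minimality of the path): (i) a vertex of $Q_{ij}$ lies in $G_{[r]\setminus\{i,j\}}$, so if it also lay in $H_k=G_{[r]\setminus\{k\}}$ for some $k\notin\{i,j\}$ it would lie in $G_{([r]\setminus\{i,j\})\cup([r]\setminus\{k\})}=G_{[r]}=\emptyset$; (ii) a common vertex of $Q_{ij}$ and $Q_{kl}$ with $\{i,j\}\cap\{k,l\}=\emptyset$ would likewise lie in $G_{[r]}=\emptyset$; (iii) a common vertex of $P_{ij}$ and $P_{ik}$ with $j\neq k$ lies in $G_{[r]\setminus\{i,j\}}\cap G_{[r]\setminus\{i,k\}}=G_{[r]\setminus\{i\}}=H_i$, hence is a common endpoint and not an interior vertex, so $Q_{ij}\cap Q_{ik}=\emptyset$. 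With these three one-line observations, the sets $B_i = V(H_i)\cup\bigcup_{j>i}Q_{ij}$ are connected, pairwise disjoint, and pairwise joined by an edge of some $P_{ij}$, giving $K_m\prec G$ for $m=h(\FF)$ and hence $K_r\prec G$ whenever $h(\FF)\geq r$. You should replace the rerouting plan by this containment argument; as submitted, the crux is asserted rather than proved.
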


\begin{proof}
Suppose $h(\FF) = m$. This means that there exists a subfamily $\mathcal{G} = \{ G_1, \dots, G_m\}$ of $\FF$ such that $N(\mathcal{G})$ is the boundary of the $(m-1)$-dimensional simplex. So, for every $(m-2)$-simplex $\sigma_1, \dots, \sigma_m$ in $N(\mathcal{G})$, where $\sigma_i = [m]\setminus \{i\}$, the graphs $G_{\sigma_1}, \dots, G_{\sigma_m}$ are non-empty, connected, and pairwise disjoint. 

For every $(m-3)$-simplex $\sigma_i\cap \sigma_j$ in $N(\mathcal{G})$, we choose a shortest path $\pi_{i,j}$ contained in $G_{\sigma_i \cap \sigma_j}$ which connects a vertex in $G_{\sigma_i}$ to a vertex in $G_{\sigma_j}$. Since $G_{\sigma_i}$ and $G_{\sigma_j}$ are pairwise disjoint, it follows that the path $\pi_{i,j}$ has at least one edge. By the minimality of $\pi_{i,j}$ it follows that only the endpoints of the path are contained in $G_{\sigma_i}$ and $G_{\sigma_j}$, respectively, while any interior vertex of the path (if there are any) is contained in $G_{\sigma_i\cap \sigma_j} \setminus V(G_{\sigma_i}\cup G_{\sigma_j})$. Note that for $j\neq k$, the paths $\pi_{i,j}$ and $\pi_{i,k}$ can have at most one vertex in common which must necessarily be a common endpoint contained in $G_{\sigma_i}$. Moreover, if
%By the minimality of $\pi_{i,j}$ it follows that $|\pi_{i,j} \cap G_{\sigma_i}| $
%We can assume that the intersection of $\pi_{i,j}$ and $G_{\sigma_i}$ consists of exactly one vertex which is an end-vertex of $\pi_{i,j}$ (while the other end-vertex of $\pi_{i,j}$ is precisely the intersection of $\pi_{i,j}$ and $G_{\sigma_j}$).%This is an alternative to say about "the shortest path...".
% Comment 9: As I understand your proof, paths pi_{i,j} and pi_{i,k} may share even inner vertices provided that the inner belong to G_{sigma_i}. Wouldn't it be slightly better to consider shortest paths?
% Answer: (MINKI) I think even if we consider shortest paths, pi_{i,j} and pi_{i,k} may have a common vertex. The important thing here is that the path pi_{i,j} is disjoint from G_{sigma_k} whenever k is none of i or j.
 $i,j,k$ and $l$ are all distinct, then the paths $\pi_{i,j}$ and $\pi_{k,l}$ are pairwise disjoint, otherwise there is a vertex contained in every member of $\mathcal{G}$. This shows that $K_m$ is a minor in $G$. \end{proof}

% Note that we have
%\begin{align}
%G_{\sigma_i \cap \sigma_j}\cap G_{\sigma_i \cap \sigma_k}
%=G_{\sigma_i \cap (\sigma_j\cup \sigma_k)}
%=G_{\sigma_i \cap [m]}=G_{\sigma_i}
%\end{align}
%and
%\begin{align}
%G_{\sigma_i \cap \sigma_j}\cap G_{\sigma_k \cap \sigma_l}
%=G_{(\sigma_i \cap \sigma_j)\cup (\sigma_k \cap \sigma_l)}
%=G_{([m]\setminus \{i,j\})\cup ([m]\setminus \{k,l\})}=G_{[m]}=\emptyset.
%\end{align}
%By (3), (4), and the shortestness of each path, it is clear that any distinct two paths $\pi_{i_1,j_1}$ and $\pi_{i_2,j_2}$ can meet each other in at most one of their end-vertices.
%SH: Here, can we use \pi_{k,l} rather than use \pi_{p,q}? I assumed that i,j,k,l are distinct elements at the first of this paragraph, so I am not sure I can use \pi_{k,l} here. But using more elements p and q does not seem to be really necessary.
%In other words, there is a $K_m$ minor in $G$. \end{proof}

\subsection{A \texorpdfstring{$\bm{(p,q)}$}{(p,q)} theorem for connected covers} 
A far-reaching generalization of Helly's theorem is the celebrated $(p,q)$ {\em theorem}, conjectured by Hadwiger and Debrunner \cite{hd57}, and proved by Alon and Kleitman \cite{pqthm}. This theorem asserts that finite families of convex sets in $\mathbb{R}^d$ which satisfy the $(p,q)$ property\footnote{Recall from the introduction that the $(p,q)$ property means that among any $p$ members some $q$ of them intersect.} (for some $p\geq q \geq d+1$) have piercing number bounded by a constant which depends only on $p$, $q$, and $d$. There are numerous generalizations of the $(p,q)$ theorem \cite{boundingpiercing, transversal, lattice, hell, pablo} and the problem of obtaining good estimates on the piercing number of families of convex sets satisfying the $(p,q)$ property is a major open problem in discrete geometry. 

In \cite{transversal}, Alon et al.~generalized the $(p,q)$ theorem to abstract set-systems (hypergraphs) and showed that any set-system which satisfies an appropriate {\em fractional Helly property} will automatically also satisfy the assertion of the $(p,q)$ theorem as well. 

Let $\FF$ be an arbitrary family of sets, and let $\beta(\alpha) : (0,1] \to (0,1]$ be a function. We say that $\FF$ satisfies the {\em fractional Helly property for $k$-tuples with density function $\beta$} if for any finite subfamily $\mathcal{G}\subset \FF$ where at least $\alpha\binom{|\mathcal{G}|}{k}$ of the $k$-membered subfamilies of $\mathcal{G}$ are intersecting, there is an intersecting subfamily of $\mathcal{G}$ containing at least $\beta(\alpha) |\mathcal{G}|$ members. %We say that $\FF$ satisfies the fractional Helly property if $\FF$ satisfies the fractional helly property for $k$-tuples 

The fractional Helly theorem, due to Katchalski and Liu \cite{KatLiu} asserts that families of convex sets in $\mathbb{R}^d$ satisfy the fractional Helly property for $(d+1)$-tuples for some density function $\beta(\alpha)$ where $\beta(\alpha) \to 1$ as $\alpha \to 1$. Later it was shown by Eckhoff \cite{eck-upper}, and independently by Kalai \cite{upperbound}, that in this case the optimal density function is $\beta(\alpha) = 1-(1-\alpha)^{1/(d+1)}$. 

\medskip

Let $\FF$ be a family of sets, and let $\FF^{\cap} = \{\textstyle{\bigcap_{S\in \mathcal{G}}} S : \mathcal{G}\subset \FF\}$ denote the family of all intersections of sets in $\FF$. The main result of \cite{transversal} asserts that if $\FF^{\cap}$ satisfies the fractional Helly property for $k$-tuples, then $\FF$ also satisfies a $(p,q)$ theorem for all $p\geq q \geq k$. (The statement we give here is in the union of Theorems 8 and 9 of \cite{transversal}.)

\begin{thm}[Alon et al. \cite{transversal}] \label{abs pq}
Given integers $p\geq q\geq k\geq 2$ and a function $\beta:(0,1] \to (0,1]$, there exists an integer $h = h(p,q,k, \beta)$ such that the following holds.
Let $\FF$ be a family of sets and suppose $\FF^{\cap}$ satisfies the fractional Helly property for $k$-tuples with density function $\beta$. If $\FF$ has the $(p,q)$ property, then the piercing number of $\FF$ is at most $h$.  
\end{thm}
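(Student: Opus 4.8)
My plan is to reproduce the argument of Alon et al.\ \cite{transversal}, which is modelled on the Alon--Kleitman proof of the $(p,q)$ theorem \cite{pqthm}, with the fractional Helly hypothesis on $\FF^{\cap}$ substituting for the Katchalski--Liu fractional Helly theorem that is used in the convex case. After a routine compactness reduction to the case where $\FF$ is finite, the proof proceeds in three phases: (1) upgrade the $(p,q)$ property to a ``fractional'' statement asserting that a single point can always pierce a positive fraction of the members; (2) deduce a bound on the fractional transversal number $\tau^*(\FF)$; (3) round this to a bound on the integral transversal number $\tau(\FF)$, which is the piercing number.

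For phase (1): since $p \ge q \ge k$, among any $p$ members of $\FF$ some $q$ are intersecting, hence some $k$ among those $q$ are intersecting, and averaging over the $p$-element subsets of a finite subfamily $\mathcal G$ shows that at least $\binom{p}{k}^{-1}\binom{|\mathcal G|}{k}$ of the $k$-element subfamilies of $\mathcal G$ are intersecting. The same holds, with a weaker constant $\alpha = \alpha(p,q,k) > 0$, for finite multisets of members of $\FF$: if some set occurs at least $k$ times, its copies form an intersecting $k$-tuple, and otherwise the multiset spans at least $p$ distinct members, to which the $(p,q)$ property applies. Feeding this into the weighted form of the fractional Helly hypothesis for $\FF^{\cap}$ -- which contains $\FF$ and is closed under intersections -- produces a constant $\delta = \beta(\alpha) > 0$ such that every finite multiset $\mathcal M$ of members of $\FF$ has a point lying in at least $\delta\,|\mathcal M|$ of its members.

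Phase (2) is linear programming duality: $\tau^*(\FF)$ equals the fractional matching number $\nu^*(\FF)$. Taking an optimal rational fractional matching and clearing denominators gives a finite multiset $\mathcal M$ of members of $\FF$ with $|\mathcal M| = \nu^*(\FF)\cdot N$ in which each point is covered at most $N$ times; phase (1) produces a point covered at least $\delta\,\nu^*(\FF)\,N$ times, whence $\delta\,\nu^*(\FF) \le 1$ and $\tau^*(\FF) = \nu^*(\FF) \le 1/\delta =: c_1$.

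Phase (3) is the crux. A fractional transversal of weight at most $c_1$ is the same as a probability measure $\mu$ on the ground set with $\mu(S) \ge 1/c_1$ for every $S \in \FF$, so it suffices to build a bounded-size $(1/c_1)$-net for $\mu$: a set of points meeting every $\mu$-heavy set, in particular every member of $\FF$. In the convex setting this is the $\epsilon$-net theorem, which rests on bounded VC dimension; in the abstract setting one must exploit the fractional Helly hypothesis a second time, now to bound the combinatorial complexity that substitutes for VC dimension -- via Kalai's enumeration results for faces of nerves \cite{upperbound,shifting} -- so that a random sample from $\mu$ whose size is bounded in terms of $c_1$ and $\beta$ is a net with positive probability. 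Its cardinality bounds $\tau(\FF)$, and hence the piercing number of $\FF$, by a constant depending only on $p$, $q$, $k$ and $\beta$. I expect this rounding step to be the main obstacle: turning a bounded fractional transversal into a bounded integral one without any VC-dimension assumption is the heart of the Alon--Kleitman theorem and the place where the density function $\beta$ does its heaviest lifting, whereas phases (1) and (2) are elementary counting and LP duality.
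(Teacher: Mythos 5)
The paper does not prove this statement: it is quoted, with attribution, from Alon, Kalai, Matou\v{s}ek and Meshulam \cite{transversal} (explicitly as ``the union of Theorems 8 and 9'' of that paper), so there is no internal argument to compare yours against. Judged on its own terms, your reconstruction of the first half of the AKMM argument is correct: phases (1) and (2) --- the supersaturation of intersecting $k$-tuples from the $(p,q)$ property, the passage to multisets, the application of fractional Helly to locate a popular point, and the LP-duality bound $\tau^*=\nu^*\le 1/\delta$ --- faithfully reproduce Theorem 8 of \cite{transversal}, modulo the routine multiset and compactness technicalities you acknowledge.

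Phase (3), however, is a placeholder rather than a proof, and the mechanism you gesture at is the wrong one. You propose that the fractional Helly hypothesis should bound a ``substitute for VC dimension'' so that a bounded-size random sample from $\mu$ pierces every member of $\FF$ with positive probability. Two objections. First, even in the model case of convex sets the rounding step is not the VC-dimension-based $\varepsilon$-net theorem (convex sets in $\mathbb{R}^d$ have unbounded VC dimension); it is the \emph{weak} $\varepsilon$-net theorem of Alon--B\'ar\'any--F\"uredi--Kleitman \cite{epsilon}, whose net points are produced outside the sample by an iterated selection-lemma argument, not drawn at random. Second, there is no visible route from the fractional Helly property to a shatter-function bound that would let a bounded random sample beat the union bound over the (unboundedly many) members of $\FF$: a sample of size $N$ misses a fixed set of measure $1/c_1$ with probability $(1-1/c_1)^N$, so without a uniform-convergence input one only gets $N\gtrsim c_1\log|\FF|$. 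The content of Theorem 9 of \cite{transversal} --- that fractional Helly for $\FF^{\cap}$ alone forces $\tau\le f(\tau^*)$ --- is established there by a deterministic iteration in the spirit of the weak $\varepsilon$-net construction, applying the fractional Helly property to auxiliary subfamilies built from $\FF^{\cap}$ (this is where the hypothesis on $\FF^{\cap}$, rather than on $\FF$ alone, is actually needed) and charging progress against a count of intersecting $k$-tuples so that the iteration halts after boundedly many steps. Since you yourself identify this step as ``the heart'' of the theorem and leave it unproved, the proposal as written establishes only the bound on the fractional transversal number, not the piercing number.
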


\medskip

% Comment 10: I think that a bit more detail on approximation of \mathcal{C} by a connected cover would be helpful.
% SH's comment : I modified it. I am not sure that $G_\mathbb{Z}$ is a good notation. Maybe one can change it. And I am not sure the term "restriction" is clear...
We now turn to the proof of Theorem \ref{connected pq}. Let $\mathcal{C}$ be a finite family of open connected sets in the plane such that any non-empty intersection of members in $\mathcal{C}$ is connected. We may assume that each member in $\mathcal{C}$ is bounded, because one can find a large open disk $D$ such that $N(\mathcal{C})$ and $N(\mathcal{C}')$ are isomorphic where $\mathcal{C}' = \{C_i \cap D: C_i \in \mathcal{C}\}$. (Here we are using the assumption that $\mathcal{C}$ is finite.)
Since open connected sets in the plane are path-connected, we can approximate $\mathcal{C}$ by a connected cover $\FF$ in a planar graph in the sense that $N(\mathcal{C})$ and $N(\FF)$ are isomorphic.
%In fact, we can find a connected cover $\FF$ on a planar graph $G$ so that $N(\mathcal{C})$ and $N(\FF)$ are isomorphic. 
One way to do this is by taking a triangulation of the disk $D$ which we view as a planar graph $G=(V,E)$, and associate each member $C_i$ in $\mathcal{C}$ with the induced subgraph $G_i = G[C_i \cap V]$. This gives us a family $\FF$ of induced subgraphs of $G$. If the edge lengths of the triangulation are sufficiently small, then $\FF$ will be a connected cover in $G$ and $N(\FF)$ will be isomorphic to $N(\mathcal{C})$. 
 
%changing each member $C_i$ in $\mathcal{C}$ to a restriction of the graph $G_{r\mathbb{Z}^2}$ in the set $C_i$, where $r$ is a positive real number and $G_{r\mathbb{Z}^2}$ is a geometric graph of the grid $r\mathbb{Z}^2$ obtained by scaling the integer grid $\mathbb{Z}^2$ by $r$.
%That is, we define a member $G_i$ in $\FF$ as the induced subgraph of $G_{r\mathbb{Z}^2}$ on the vertex subset $r\mathbb{Z}^2 \cap C_i$. 
%One way to do this is changing each member in $\mathcal{C}$ to a restriction of a geometric graph $r G_\mathbb{Z}$ in $C_i\cap (1/r) D$ where $G_\mathbb{Z}$ is a geometric graph of the integer grid $\mathbb{Z}^2$ and $D$ is a unit open disk centered at the origin, and $r$ is a positive real.
%If $r$ is sufficiently small, then $\FF$ is a connected cover in a finite planar graph such that $N(\mathcal{C})$ and $N(\FF)$ are isomorphic.
%For sufficiently small $r$, the connected cover consists of induced subgraphs of $r G_{\mathbb{Z}}$ on is the desired one.
%For a sufficiently small $r$, we can find a desired connected cover consisting of those restricted subgraphs of $r G_\mathbb{Z}$.

In order to apply Theorem \ref{abs pq} we need to show that $\FF$ satisfies the fractional Helly property for triples. (Note that if $\FF$ is a connected cover in a graph $G$, then by definition $\FF^{\cap}$ is also a connected cover in $G$.) In particular we have the following fractional Helly theorem.

\begin{thm} \label{fractional helly}
For every $\alpha\in (0,1]$ there exists a $\beta = \beta(\alpha)>0$ such that the following holds. Let $\FF$ be a connected cover in a $K_5$ minor-free graph. If at least $\alpha\binom{|F|}{3}$ of the triples in $\FF$ are intersecting, then there is an intersecting subfamily in $\FF$ of size at least $\beta|\FF|$. 
\end{thm}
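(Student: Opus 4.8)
The plan is to deduce this from a fractional Helly theorem for $4$-tuples, supplied by Kalai's work on Leray complexes, and then to close the gap between $4$-tuples and triples using planarity; the latter step is the real content.

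First I would observe that $N(\FF)$ is $3$-Leray, i.e.\ every induced subcomplex has vanishing reduced $\mathbb{Z}_2$-homology in all dimensions $\ge 3$. Indeed, if $G$ is $K_5$ minor-free then so is every induced subgraph, and for every $\FF' \subseteq \FF$ the restriction is again a connected cover in $G$ with $N(\FF') \cong N(\FF)[V']$, where $V'$ is the index set of $\FF'$; so Theorem~\ref{main-thm}\eqref{k5minor} (equivalently, $\gamma(G) \le 2$) gives $\tilde H_i(N(\FF')) = 0$ for all $i \ge 3$. By a theorem of Kalai on the $f$-vectors of $d$-Leray complexes, every $d$-Leray complex satisfies the fractional Helly property for $(d+1)$-tuples, with optimal density function $1-(1-\alpha)^{1/(d+1)}$. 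With $d=3$ this yields a $\beta_4(\alpha)>0$ such that if at least $\alpha\binom{|\FF|}{4}$ of the $4$-membered subfamilies of $\FF$ are intersecting, then $\FF$ has an intersecting subfamily of size at least $\beta_4(\alpha)|\FF|$.

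It therefore suffices to establish the reduction: with $n = |\FF|$, if at least $\alpha\binom n 3$ triples of $\FF$ are intersecting, then either $\FF$ already has an intersecting subfamily of size $\ge \tfrac12\beta_4(\alpha)\,n$, or at least $\alpha'\binom n 4$ of its $4$-tuples are intersecting, for some $\alpha' = \alpha'(\alpha) > 0$. I would prove this by contradiction: suppose $\FF$ has no intersecting subfamily of size $\ge \beta n$ --- equivalently, every vertex of $G = \bigcup \FF$ lies in fewer than $\beta n$ members, since a common vertex of a subfamily certifies that the subfamily is intersecting --- and suppose fewer than $\alpha'\binom n 4$ of the $4$-tuples are intersecting; the goal is to conclude that fewer than $\alpha\binom n 3$ triples are intersecting. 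For each intersecting triple choose a witness vertex in its (connected) common intersection; by the first assumption each vertex of $G$ witnesses fewer than $\binom{\beta n}{3}$ triples, so the witnesses must be dispersed over many vertices of $G$. The heart of the matter is to show that planarity is incompatible with this many intersecting triples whose witnesses are so dispersed while the $4$-tuple density stays low: one routes short paths between the witnesses of a triple inside the pairwise intersections of its members (as in the proof of Theorem~\ref{hely}), and then exploits the planar embedding --- paths confined to faces, non-crossing segments, cycles bounding faces, in the spirit of Lemma~\ref{isoperm} and the proof of Theorem~\ref{3homol} --- to organize a large collection of pairwise crossing path-systems into a $K_5$ minor, contradicting $K_5 \not\prec G$.

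The hard part will be exactly this last step, and using planarity here (rather than just the $3$-Leray property) is genuinely unavoidable: the $2$-skeleton of the simplex on $n$ vertices is $3$-Leray and also has Helly number $4$ (so it is not excluded by Theorem~\ref{hely} either), yet all $\binom n 3$ of its triples are faces while it has no face of size $4$, which would contradict the conclusion for large $n$. Hence a correct proof must rule out such configurations among nerves of connected covers in $K_5$ minor-free graphs, and I expect the delicate points to be (i) the planar/crossing argument converting ``many dispersed witnesses'' into either a large intersecting subfamily or many intersecting $4$-tuples, and (ii) tracking the resulting dependence of $\beta$ on $\alpha$.
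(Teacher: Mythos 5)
Your first half is exactly the paper's: $N(\FF)$ is $3$-Leray by Theorem~\ref{main-thm}\eqref{k5minor} applied to induced subcovers, and Kalai's upper bound theorem then gives the fractional Helly property for $4$-tuples (this is Corollary~\ref{strong fractional} in the paper). Your observation that the $2$-skeleton of the simplex is $3$-Leray with all triples but no $4$-tuples as faces is also a genuinely good sanity check: it shows the passage from triples to $4$-tuples cannot come from the Leray property alone and must use the graph structure.

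However, the reduction from triple-density to $4$-tuple-density --- which you correctly identify as the real content --- is not actually proved in your proposal; it is only gestured at, and the gesture points in a direction that would not work. The paper's reduction has two concrete ingredients you are missing. First, the Erd\H{o}s--Simonovits supersaturation theorem: if at least $\alpha\binom{n}{3}$ triples intersect, the $3$-uniform hypergraph of intersecting triples contains $\Omega(n^{20})$ copies of the complete $3$-partite $3$-uniform hypergraph $K^{(3)}_{(5,5,10)}$. Second, a ``weak colorful Helly'' lemma (Lemma~\ref{weakcol}): if $\FF=\mathcal{A}\cup\mathcal{B}\cup\mathcal{C}$ with $|\mathcal{A}|=|\mathcal{B}|=5$, $|\mathcal{C}|=10$, every colorful triple intersects, and no four members intersect, then $G$ has a $K_5$ minor. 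This lemma is proved by a direct combinatorial construction of five disjoint connected subgraphs pairwise joined by paths routed through the pairwise and triple intersections --- no embedding is used. That matters, because your sketched endgame (``fix the planar embedding, paths confined to faces, non-crossing segments, cycles bounding faces'') is not available here: the hypothesis is only that $G$ is $K_5$ minor-free, and such graphs need not be planar (e.g.\ clique-sums involving the Wagner graph $W_8$). Once each copy of $K^{(3)}_{(5,5,10)}$ is forced to contain an intersecting $4$-tuple, a standard double count (each $4$-set lies in $O(n^{16})$ copies) yields $\alpha'\binom{n}{4}$ intersecting $4$-tuples and Corollary~\ref{strong fractional} finishes. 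Your ``dispersed witnesses'' contradiction scheme, as stated, supplies neither the supersaturation step nor the minor construction, so the proof is incomplete at its central point.
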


Applying Theorem \ref{abs pq} we obtain the following corollary which contains Theorem \ref{connected pq} as a special case.

\begin{cor}\label{pq K5}
For any positive integers $p\geq q \geq 3$ there exists an integer $t = t(p,q)$ such that the following holds.
Let $\FF$ be a connected cover in a $K_5$ minor-free graph. If $\FF$ has the $(p,q)$ property, then piercing number of $\FF$ is at most $t$. 
\end{cor}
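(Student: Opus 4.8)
The plan is to deduce Corollary \ref{pq K5} directly from Theorem \ref{fractional helly} together with the abstract $(p,q)$ theorem of Alon et al.\ (Theorem \ref{abs pq}), applied with $k = 3$. So let $\FF$ be a connected cover in a $K_5$ minor-free graph $G$ which has the $(p,q)$ property for some $p \geq q \geq 3$.

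First I would record two elementary closure properties. If $\mathcal{G} \subseteq \mathcal{H}$ and $\mathcal{H}$ is a connected cover in $G$, then $\mathcal{G}$ is again a connected cover in $G$: every $\sigma \in N(\mathcal{G})$ also lies in $N(\mathcal{H})$, and the associated intersection is connected since $\mathcal{H}$ is a connected cover. Second, as already observed in the text, $\FF^{\cap}$ is a connected cover in $G$. Together these show that every finite subfamily $\mathcal{G} \subseteq \FF^{\cap}$ is a connected cover in the $K_5$ minor-free graph $G$.

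Now let $\beta : (0,1] \to (0,1]$ be the density function supplied by Theorem \ref{fractional helly}. Applying that theorem with an arbitrary finite $\mathcal{G} \subseteq \FF^{\cap}$ in the role of the connected cover yields: whenever at least $\alpha \binom{|\mathcal{G}|}{3}$ of the triples of $\mathcal{G}$ are intersecting, $\mathcal{G}$ contains an intersecting subfamily of size at least $\beta(\alpha)\,|\mathcal{G}|$. This is exactly the assertion that $\FF^{\cap}$ satisfies the fractional Helly property for triples with density function $\beta$. Since $p \geq q \geq 3 = k \geq 2$ and $\FF$ has the $(p,q)$ property, Theorem \ref{abs pq} applies and bounds the piercing number of $\FF$ by $h(p,q,3,\beta)$; setting $t(p,q) = h(p,q,3,\beta)$ finishes the proof. (Theorem \ref{connected pq} then follows, since any such family of open connected sets in the plane is, as described in the text, approximated by a connected cover with isomorphic nerve in a planar --- hence $K_5$ minor-free --- graph.)

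The deduction is short because all the difficulty has been absorbed into Theorem \ref{fractional helly}, which is where the real obstacle lies. The delicate point there is getting the fractional Helly property for \emph{triples} rather than merely for $4$-tuples: Kalai's inequalities on the $f$-vectors of complexes whose induced subcomplexes have vanishing homology above a fixed dimension, combined with $\gamma(G) \leq 2$ for planar --- hence $K_5$ minor-free --- graphs from Theorem \ref{main-thm}, directly give the property only for $4$-tuples, and an additional combinatorial argument is needed to lower the exponent to $3$.
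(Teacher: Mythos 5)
Your proposal is correct and follows essentially the same route as the paper: the corollary is deduced by combining Theorem \ref{fractional helly} (which supplies the fractional Helly property for triples for $\FF^{\cap}$, since $\FF^{\cap}$ and its subfamilies are again connected covers in the same $K_5$ minor-free graph) with the abstract $(p,q)$ theorem of Alon et al.\ applied with $k=3$. Your added remarks correctly locate where the real work lies, namely in Theorem \ref{fractional helly} itself.
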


\subsection{Proof of the fractional Helly theorem}
A simplicial complex $K$ is called {\bf \em $\bm d$-Leray} if $\tilde{H}_i(L) = 0$ for all $i\geq d$ and every induced subcomplex $L\subset K$. This notion arises naturally in the study of Helly-type theorems due to the fact that %A basic consequence of the Borsuk's nerve theorem is that 
the nerve of a finite family of convex sets in $\mathbb{R}^d$ is $d$-Leray. 
 
Let $f_j(K)$ denote the number of $j$-dimensional simplices in $K$. A deep generalization of the fractional Helly theorem for convex sets is the ``upper bound theorem'' for $d$-Leray complexes due to Kalai \cite{upperbound, shifting} (see also \cite[Theorem 13]{transversal}). This theorem gives precise upper bounds on $f_j(K)$ for $d\leq j < d+r$ in terms of $f_0(K)$ provided $f_{d+r}(K) = 0$. We will need the following consequence of Kalai's upper bound theorem.\footnote{Kalai's upper bound theorem for $d$-Leray complexes is usually stated for homology with coefficients in $\mathbb{Q}$, but also holds when using coefficients in $\mathbb{Z}_2$ by a standard application of the universal coefficient theorem.}

\begin{thm}[Kalai \cite{upperbound, shifting}] \label{d-leray bound}
Let $K$ be a $d$-Leray complex with $f_0(K) = n$. If $f_d(K)>\binom{n}{d+1} - \binom{n-r}{d+1}$, then  $f_{d+r}(K)>0$.
\end{thm}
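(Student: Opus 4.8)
The plan is to prove the equivalent contrapositive: if $f_{d+r}(K)=0$, then $f_d(K)\le\binom{n}{d+1}-\binom{n-r}{d+1}$. The key idea is to replace $K$ by an algebraically shifted complex and then read off the bound from the rigid combinatorial structure of shifted complexes. The first step would be to invoke Kalai's theory of (exterior) algebraic shifting: it associates to $K$ a \emph{shifted} simplicial complex $\Delta=\Delta(K)$ on the vertex set $[n]$ --- meaning that replacing an element of a face of $\Delta$ by a smaller element again yields a face --- such that $f_i(\Delta)=f_i(K)$ for all $i$ and, crucially, such that $\Delta$ is again $d$-Leray (Kalai's theorem that algebraic shifting does not raise the Leray number). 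Since exterior shifting is most transparent over $\mathbb{Q}$, one first notes that $\dim_{\mathbb{Z}_2}\tilde H_i(-;\mathbb{Z}_2)\ge\dim_{\mathbb{Q}}\tilde H_i(-;\mathbb{Q})$, so a complex that is $d$-Leray over $\mathbb{Z}_2$ is $d$-Leray over $\mathbb{Q}$ as well, while the target inequality for the $f$-vector is independent of the coefficient field.

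The second step is a purely combinatorial description of which shifted complexes are $d$-Leray. Here I would use the classical fact that a shifted complex $\Sigma$ with least vertex $v$ is homotopy equivalent to a wedge of spheres containing exactly one sphere of dimension $|F|-1$ for each face $F\in\Sigma$ with $v\notin F$ and $F\cup\{v\}\notin\Sigma$ (this follows from the acyclic matching that pairs $F$ with $F\cup\{v\}$ whenever both are faces). Since every vertex-induced subcomplex $\Delta_W$ of a shifted complex $\Delta$ is again shifted with respect to the induced order, applying this to all $W\subseteq[n]$ yields: $\Delta$ is $d$-Leray if and only if, for every face $S\in\Delta$ with $|S|\ge d+1$ and every vertex $v<\min(S)$, one has $S\cup\{v\}\in\Delta$. (We only need the ``only if'' direction: apply the wedge-of-spheres formula to $\Delta_{S\cup\{v\}}$, whose sole face of size $|S|$ not containing $v=\min(S\cup\{v\})$ is $S$ itself, and use that $|S|-1\ge d$.)

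In the third step I combine this with $f_{d+r}(\Delta)=f_{d+r}(K)=0$. Let $S$ be any $d$-face of $\Delta$ and write $\min(S)=j+1$. If $j\ge1$, then repeatedly applying the criterion of the second step --- at each stage the set still has at least $d+1$ elements --- produces the face $S\cup\{1,2,\dots,j\}\in\Delta$ of dimension $d+j$; since a simplicial complex with $f_{d+r}=0$ has no face of dimension $\ge d+r$, this forces $d+j<d+r$, that is $\min(S)\le r$, an inequality which holds trivially when $j=0$. Hence every $d$-face of $\Delta$ is a $(d+1)$-element subset of $[n]$ whose smallest element lies in $\{1,\dots,r\}$. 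There are exactly $\binom{n-r}{d+1}$ such subsets contained in $\{r+1,\dots,n\}$, so $f_d(K)=f_d(\Delta)\le\binom{n}{d+1}-\binom{n-r}{d+1}$, completing the proof.

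The main obstacle is the first step: verifying that algebraic shifting sends a $d$-Leray complex to a $d$-Leray complex while preserving the $f$-vector. This is precisely the content of Kalai's theory of exterior algebraic shifting and rests on the interaction of the shifting operator with restriction to vertex-induced subcomplexes together with rigidity-type arguments in the exterior algebra; once it is granted, the remaining steps are elementary bookkeeping with shifted complexes and binomial coefficients.
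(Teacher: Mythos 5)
The paper does not prove this statement --- it is quoted from Kalai \cite{upperbound,shifting} with only a footnote reducing the $\mathbb{Z}_2$ case to the rational one --- so there is no internal proof to compare against. Your argument is the standard algebraic-shifting proof and is correct modulo the one ingredient you explicitly flag as a black box: that exterior shifting preserves the $f$-vector and does not increase the Leray number. That is precisely the content of the cited work of Kalai, and your universal-coefficient reduction from $\mathbb{Z}_2$-Leray to $\mathbb{Q}$-Leray matches the paper's footnote. One simplification worth noting: in your second step you do not need the wedge-of-spheres description of shifted complexes at all. For $W=S\cup\{v\}$ with $v<\min(S)$ and $S\in\Delta$, shiftedness already forces every $|S|$-element subset of $W$ to be a face of $\Delta$ (each such subset other than $S$ arises from $S$ by replacing one element with the smaller element $v$), so if $W\notin\Delta$ then $\Delta_W$ is exactly the boundary of the simplex on $W$, a sphere of dimension $|S|-1\ge d$, contradicting $d$-Lerayness directly. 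The bookkeeping in your third step (propagating small vertices into a $d$-face until it reaches dimension $d+j$, concluding $\min(S)\le r$, and counting the $(d+1)$-sets not contained in $\{r+1,\dots,n\}$) is correct.
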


As an immediate consequence of Theorem \ref{d-leray bound} we get that if $f_d(K)>\alpha \binom{n}{d+1}$, then $f_{\lfloor \beta n \rfloor}(K) >0$ where $\beta = 1 - (1-\alpha)^{1/(d+1)}$. Now let $\FF$ be a connected cover in a $K_5$ minor-free graph $G$. By Theorem \ref{main-thm}, $N(\FF)$ is a $3$-Leray complex, which implies the following.

\begin{cor} \label{strong fractional} Let $\FF$ be a connected cover in a $K_5$-minor-free graph $G$. If the number of intersecting subfamilies of $\FF$ of size four is at least $\alpha\binom{|\FF|}{4}$, then there is an intersecting subfamily of $\FF$ of size at least $\beta |\FF|$ where $\beta = 1-(1-\alpha)^{1/4}$. 
\end{cor}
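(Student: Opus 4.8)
The plan is to obtain Corollary~\ref{strong fractional} as a direct application of Kalai's upper bound theorem (Theorem~\ref{d-leray bound}) with $d=3$, the only substantive input being that the nerve of a connected cover in a $K_5$-minor-free graph is $3$-Leray.

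\emph{Step 1: $N(\FF)$ is $3$-Leray.} Let $K=N(\FF)$. An induced subcomplex $L\subseteq K$ is the full subcomplex of $K$ on some vertex set $S$, and a subset $\sigma\subseteq S$ is a face of $L$ precisely when $\bigcap_{i\in\sigma}G_i\ne\emptyset$; hence $L=N(\FF')$ where $\FF'=\{G_i:i\in S\}$. Every face of $N(\FF')$ is a face of $N(\FF)$ with the same underlying intersection, so $\FF'$ is again a connected cover in $G$ in the sense of Definition~\ref{def:connected cover}. Since $G$ has no $K_5$ minor, Theorem~\ref{main-thm}\eqref{k5minor} gives $\gamma(G)\le 2$, and therefore $\tilde H_i(L)=\tilde H_i(N(\FF'))=0$ for all $i\ge 3$. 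As $L$ was an arbitrary induced subcomplex, $K$ is $3$-Leray.

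\emph{Step 2: translate and apply.} Write $n=|\FF|=f_0(K)$. A subfamily of size $m$ is intersecting exactly when its members span an $(m-1)$-simplex of $K$, so the number of intersecting $4$-subfamilies of $\FF$ equals $f_3(K)$ and the hypothesis becomes $f_3(K)\ge\alpha\binom{n}{4}$, while the conclusion asks for a simplex of $K$ on at least $\beta n$ vertices. Now apply the consequence of Theorem~\ref{d-leray bound} recorded just after its statement, with $d=3$: since $K$ is $3$-Leray and $f_3(K)\ge\alpha\binom{n}{4}$, we obtain $f_{\lfloor\beta n\rfloor}(K)>0$ with $\beta=1-(1-\alpha)^{1/4}$, i.e.\ $K$ has a simplex on $\lfloor\beta n\rfloor+1>\beta n$ vertices, which is an intersecting subfamily of $\FF$ of size at least $\beta n$. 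Unwinding the invocation, one picks $r$ as large as possible with $\binom{n}{4}-\binom{n-r}{4}\le\alpha\binom{n}{4}$ and uses the implication $f_3(K)>\binom{n}{4}-\binom{n-r}{4}\Rightarrow f_{3+r}(K)>0$ of Theorem~\ref{d-leray bound}; that the optimal ratio $r/n$ is $1-(1-\alpha)^{1/4}$ is the standard Eckhoff--Kalai computation, which we quote rather than reprove.

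There is essentially no obstacle in this argument: it is a bookkeeping step plus a citation. The only items needing a little care are the customary floor/ceiling rounding and the gap between the strict inequality in Theorem~\ref{d-leray bound} and the non-strict hypothesis of the corollary; both are harmless, and in any case the only downstream use (in the proof of Theorem~\ref{fractional helly}, hence of Theorem~\ref{connected pq}) requires merely \emph{some} positive density function $\beta(\alpha)$, so even a clean but lossy version of the last estimate would suffice.
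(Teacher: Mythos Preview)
Your proof is correct and follows the same route as the paper: establish that $N(\FF)$ is $3$-Leray via Theorem~\ref{main-thm}\eqref{k5minor}, then invoke the consequence of Kalai's upper bound theorem stated right before the corollary. Your Step~1 spells out explicitly why every induced subcomplex of $N(\FF)$ is again the nerve of a connected cover (a point the paper leaves implicit), and your remarks on the strict-versus-nonstrict inequality and rounding are apt caveats that do not affect the downstream application.
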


This is almost what we need to prove Theorem \ref{connected pq}, and by Theorem \ref{abs pq} it implies a $(p,q)$ theorem for all $p\geq q \geq 4$. To complete the proof of Theorem \ref{fractional helly} we need to show $\FF$ satisfies the fractional Helly property for triples. This requires some additional combinatorial arguments. 

\medskip

The first tool we need is a well-known theorem due to Erd{\H o}s and Simonovits \cite[Corollary 2]{ErdSim}. Given integers $r\geq 2$ and $m_1, \dots, m_r\geq 1$ let $K^{(r)}_{(m_1, \dots, m_r)}$ denote the complete multipartite $r$-uniform hypergraph with vertex classes of size $m_1, \dots, m_r$. That is, let the vertex set of $K^{(r)}_{(m_1, \dots, m_r)}$ consist of disjoint sets $V_1, \dots, V_r$ where $|V_i| = m_i$ and let the edges be all $r$-tuples which contain exactly one vertex from each $V_i$. The Erd{\H o}s--Simonovits theorem asserts that any dense $r$-uniform hypergraph contains {\em many} distinct copies of $K^{(r)}_{(m_1, \dots, m_r)}$. We state here only the case that is needed in the proof of Theorem \ref{fractional helly}.

\begin{thm}[Erd{\H o}s--Simonovits]\label{erdos Simonovits}
For any $c_1>0$ there exists a $c_2>0$ such that if a $3$-uniform hypergraph $\mathcal{H}$ has at least $c_1n^3$ edges, then $\mathcal{H}$ contains at least $c_2n^{20}$ copies of $K^{(3)}_{(5,5,10)}$. 
\end{thm}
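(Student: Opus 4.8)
The plan is to derive the statement by the standard \emph{supersaturation} argument from its single-copy version. The ingredient I would use as a black box is Erd{\H o}s's theorem that complete multipartite $3$-uniform hypergraphs have Tur\'an density zero; concretely, for every $\varepsilon>0$ there is an integer $N=N(\varepsilon)$ such that every $3$-uniform hypergraph on $N$ vertices with at least $\varepsilon\binom{N}{3}$ edges contains a copy of $K^{(3)}_{(5,5,10)}$ (which has $5+5+10=20$ vertices). This single-copy statement is classical, and can itself be proved by a short greedy/pigeonhole argument on pairs of vertices and their common links; the passage from one copy to many copies is exactly the content of \cite[Corollary 2]{ErdSim}, and is what I would actually write out.

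First I would fix the parameters. Given $c_1>0$, set $\varepsilon:=c_1$ and let $N:=N(c_1)$ be the corresponding threshold above; note that $N$ is chosen \emph{after} $c_1$, which is harmless since the constant $c_2$ we produce is permitted to depend on $c_1$. Let $\mathcal{H}$ be a $3$-uniform hypergraph on $n$ vertices with $|E(\mathcal{H})|\ge c_1 n^3$. We may assume $n$ is sufficiently large in terms of $c_1$, which is all that is needed in the application. Since $\binom{n}{3}<n^3/6$, the edge density $\rho:=|E(\mathcal{H})|/\binom{n}{3}$ exceeds $6c_1$.

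The heart of the proof is an averaging over $N$-element vertex subsets. For a uniformly random $W\subseteq V(\mathcal{H})$ with $|W|=N$, a fixed $3$-set lies in $W$ with probability $\binom{N}{3}/\binom{n}{3}$, so the expected number of edges of $\mathcal{H}$ spanned by $W$ equals $\rho\binom{N}{3}$. Since this count never exceeds $\binom{N}{3}$, a one-line Markov-type estimate gives that $W$ spans at least $\varepsilon\binom{N}{3}$ edges with probability at least $\rho-c_1>5c_1$. Hence at least $5c_1\binom{n}{N}$ of the $N$-subsets span $\ge\varepsilon\binom{N}{3}$ edges, and by the choice of $N$ each of them contains a copy of $K^{(3)}_{(5,5,10)}$. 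Since any fixed $20$-vertex copy is contained in exactly $\binom{n-20}{N-20}$ of the $N$-subsets, double counting the incidences between copies and the $N$-subsets containing them yields at least
\[
\frac{5c_1\binom{n}{N}}{\binom{n-20}{N-20}}
\;=\;5c_1\cdot\frac{n(n-1)\cdots(n-19)}{N(N-1)\cdots(N-19)}
\;\ge\;c_2\,n^{20}
\]
copies, where $c_2:=c_1/(2N)^{20}$ works once $n\ge 38$ (so that $n-19\ge n/2$); the finitely many smaller $n$, and the bounded number of ways of partitioning a $20$-set into the three classes of $K^{(3)}_{(5,5,10)}$, are absorbed into a further decrease of $c_2$.

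The argument is almost entirely bookkeeping; the single genuine ingredient is the single-copy Erd{\H o}s theorem, which I am invoking rather than reproving. The only points that need attention are that the subset size $N$ must be selected as a function of $c_1$ (which the statement permits, since $c_2$ may depend on $c_1$), and that a $20$-vertex copy is overcounted among the $N$-subsets by no more than the bounded factor $\binom{n-20}{N-20}$ — it is precisely this bounded overcount that converts ``a constant fraction of the $N$-subsets contains a copy'' into ``$\Omega(n^{20})$ copies'', which is what the theorem asserts.
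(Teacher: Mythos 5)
Your proposal is correct in substance, but note that the paper does not prove this statement at all: it is quoted verbatim as \cite[Corollary~2]{ErdSim} and used as a black box. What you have written is essentially the standard proof of that cited result — the supersaturation-by-averaging argument of Erd\H{o}s and Simonovits — reducing the ``many copies'' statement to the single-copy fact that complete multipartite $3$-uniform hypergraphs have Tur\'an density zero (Erd\H{o}s's theorem), which you in turn invoke as a black box. The parameter bookkeeping checks out: the reverse-Markov step giving that a $5c_1$-fraction of the $N$-subsets are $\varepsilon$-dense is right, the double count against the $\binom{n-20}{N-20}$ supersets of a fixed $20$-set is right, and the final constant $c_2=c_1/(2N)^{20}$ (further shrunk to account for the labelling convention for copies) is fine for $n\ge\max(N,38)$.

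One small caveat. As literally stated (and as stated in the paper), the theorem is false for small $n$: for $20\le n$ bounded in terms of $c_1$ one can have a $K^{(3)}_{(5,5,10)}$-free hypergraph of density $6c_1$, and for $n<20$ no copy can exist at all, yet the hypothesis $|E(\mathcal{H})|\ge c_1n^3$ may hold; no decrease of $c_2$ repairs this, since $c_2n^{20}>0$. Your earlier remark that one may assume $n$ large (which is all the application in Theorem~\ref{fractional helly} needs, and is the implicit convention in \cite{ErdSim}) is the correct resolution; the later claim that ``the finitely many smaller $n$ \dots are absorbed into a further decrease of $c_2$'' should be dropped, as it contradicts it and is not true.
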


The second tool we need is a ``weak colorful Helly theorem'' for connected covers. Suppose $\FF$ is a connected cover in $G$ and suppose each member of $\FF$ has been colored with one of three distinct colors such that each color is used sufficiently many times. The assertion of the ``weak colorful Helly theorem'' is that if every colorful triple is intersecting (a colorful triple is one in which each color appears), then either some four members of $\FF$ intersect, or $G$ contains a large complete minor. We only state the version needed for the proof of Theorem \ref{fractional helly}, but more general versions can be obtained by the same proof method. (See the discussion in section \ref{final}.) 

\begin{lem} \label{weakcol}
Let $\FF$ be a connected cover in a graph $G$ with $|\FF| = 20$. Suppose there is a partition $\FF = \mathcal{A} \cup \mathcal{B} \cup \mathcal{C}$ where $|\mathcal{A}| = |\mathcal{B}| = 5$ and $|\mathcal{C}| = 10$, and that $A_i\cap B_j\cap C_k\neq \emptyset$ for every transversal $A_i\in \mathcal{A}$, $B_j\in \mathcal{B}$, $C_k\in \mathcal{C}$. If there are no four members of $\FF$ that intersect, then $G$ has a $K_5$ minor.
\end{lem}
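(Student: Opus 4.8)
The plan is to exhibit $K_5$ as a minor of $G$ directly, in the spirit of the proof of Theorem~\ref{hely}, using the colourful hypothesis to build five pairwise disjoint connected subgraphs of $G$ together with enough internally disjoint paths joining them. For each colourful triple $(A_i,B_j,C_k)$ fix once and for all a witness vertex $x_{ijk}\in V(G_{A_i}\cap G_{B_j}\cap G_{C_k})$, which exists by hypothesis. The single combinatorial fact used over and over is that, since no four members of $\FF$ intersect, \emph{any} intersection of four or more distinct members of $\FF$ is empty; together with the colourful hypothesis (which makes $G_{A_i}\cap G_{B_j}$, $G_{A_i}\cap G_{C_k}$, $G_{B_j}\cap G_{C_k}$ and $G_{A_i}\cap G_{B_j}\cap G_{C_k}$ all non-empty, hence --- being intersections of members of a connected cover --- connected), this is what forces the pieces we construct to be disjoint.

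For the branch sets, choose five colourful triples supported on pairwise disjoint sets of members, for instance $\tau_m=\{A_m,B_m,C_m\}$ for $m\in[5]$ (this uses all of $\mathcal A$ and $\mathcal B$, which is one place where $|\mathcal A|=|\mathcal B|=5$ enters), and set $W_m=G_{\tau_m}$. Each $W_m$ is non-empty (it contains $x_{mmm}$) and connected, and since $\tau_m\cup\tau_{m'}$ consists of six distinct members for $m\neq m'$ we get $W_m\cap W_{m'}=\emptyset$; in fact more is true --- because $\{A_l,B_l,C_l\}\cup\tau_m$ has at least four distinct members for $l\neq m$, the set $W_m$ is disjoint from each of $G_{A_l}$, $G_{B_l}$, $G_{C_l}$. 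The same four-member principle shows that no single member of $\FF$, and no intersection of two or more members of $\FF$, can meet both $W_m$ and $W_{m'}$: such a member (or pair of members) would have to be supported inside $\tau_m\cap\tau_{m'}=\emptyset$.

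It remains to join the $W_m$ pairwise by paths whose interiors avoid all five branch sets and each other. Since no small intersection of members bridges $W_m$ and $W_{m'}$, a connecting path must run through a chain of members; the natural route for the pair $\{m,m'\}$ is to leave $W_m$ inside $G_{A_m}$, pass into a ``spare'' member $G_{C_l}$ with $l\in\{6,\dots,10\}$ through a point of $G_{A_m}\cap G_{C_l}$, cross $G_{C_l}$ to a point of $G_{C_l}\cap G_{A_{m'}}$, and finally enter $W_{m'}$ inside $G_{A_{m'}}$ (this is where the remaining five members of $\mathcal C$ are used, and where $|\mathcal C|=10$ enters). Each of the three segments of such a path lies inside a single member, hence by the four-member principle is automatically disjoint from every $W_l$ with $l\notin\{m,m'\}$, so each individual connector is disjoint from the other branch-set cores. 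The main obstacle --- the technical heart of the argument --- is that distinct connectors need \emph{not} be disjoint from one another: at a vertex $m$ the four incident connectors have only the three exit members $G_{A_m},G_{B_m},G_{C_m}$ available, and only five spare members of $\mathcal C$ are available to carry ten connectors, so overlaps are forced. These are handled by choosing all segments to be shortest paths and absorbing their shared initial portions into the adjacent branch sets, enlarging each $W_m$ to a connected set $\widehat W_m$, and then rechecking --- invoking the four-member principle yet again --- that the $\widehat W_m$ remain pairwise disjoint and that the trimmed connectors still have interiors missing every $\widehat W_l$. Carrying out this absorption and disjointness check cleanly is exactly where the precise numbers $5,5,10$ are needed to leave enough room, and it is a more delicate version of the minor construction in the proof of Theorem~\ref{hely}. (Alternatively, one could argue by contradiction from $\gamma(G)\le 2$ via Theorem~\ref{main-thm}, manufacturing from the same colourful structure a connected cover of a minor of $G$ with non-vanishing third homology; but the direct construction appears cleaner.)
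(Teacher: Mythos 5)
Your overall strategy (build five disjoint connected branch sets from the colourful structure and join them by connectors, using the ``no four intersect'' hypothesis to force disjointness) is the right one, but the specific allocation of the members of $\mathcal{C}$ breaks the argument at exactly the step you defer. You split $\mathcal{C}$ into five members used in the branch sets $W_m=A_m\cap B_m\cap C_m$ and five ``spares'' meant to carry the ten connectors. By pigeonhole, two connectors for \emph{disjoint} pairs $\{m,m'\}$ and $\{k,k'\}$ must then share a spare $C_l$, and their middle segments both live inside the single member $G_{C_l}$ (or in intersections like $C_l\cap A_m$, involving only two or three members), so the four-member principle gives no obstruction to their crossing. Such a crossing sits in the interior of both connectors and is adjacent to none of the branch sets, so it cannot be ``absorbed'' into any $\widehat W_i$ without merging two branch sets; the same problem occurs for the four first segments leaving $W_m$, which all live in the single member $A_m$. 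This is not a routine verification left to the reader --- it is the heart of the lemma, and the $5,5,10$ split you chose does not leave enough room to resolve it.

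The paper's proof avoids the issue by a different bookkeeping of $\mathcal{C}$: the ten members are indexed by the pairs, $\mathcal{C}=\{C_{i,j}\}_{1\le i<j\le 5}$, the branch set is $G_i=(A_i\cap B_i)\cup\bigcup_{j>i}(A_i\cap C_{i,j})$, and the single connector $\pi_{i,j}$ for the pair $\{i,j\}$ is a path inside $B_j\cap C_{i,j}$ from (a neighbour of) $A_i\cap C_{i,j}\subset G_i$ to $A_j\cap B_j\subset G_j$, then absorbed into $G_j$. Because each $C_{i,j}$ is dedicated to exactly one pair and appears in exactly one branch-set piece and one connector, every potential overlap either involves four distinct members of $\FF$ (hence is empty) or occurs between objects that are absorbed into the \emph{same} enlarged branch set $G_j'$ (this is the content of the observation that $\pi_{i,j}\cap\pi_{k,l}\neq\emptyset$ forces $j=l$). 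I would redo your construction along these lines; as written, the proposal has a genuine gap.
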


\begin{proof} Let $\mathcal{A} = \{A_1, \dots, A_5\}$, $\mathcal{B} = \{B_1, \dots, B_5\}$, and $\mathcal{C} = \{C_{1,2}, C_{1,3}, \dots, C_{4,5}\}$. Assuming that no four members in $\mathcal{F}$ intersect, we will construct a $K_5$ minor. For every $1\leq i \leq 5$, let $G_i$ be the subgraph defined as 
\[G_i = (A_i\cap B_i) \cup \left(\textstyle{\bigcup_{j>i}}(A_i\cap C_{i,j}) \right).\] 
We first note that $G_i$ is connected for every $i$. To see this, observe that $G_i$ is a union of connected subgraphs, and that $(A_i\cap B_i) \cap (A_i\cap C_{i,j}) = A_i\cap B_i \cap C_{i,j} \neq \emptyset$ for every $j$. Therefore $G_i$ is connected. Next, we observe that if $G_i \cap G_j \neq\emptyset$ for some $i<j$, then some four members of $\mathcal{F}$ intersect, so we may  assume that the subgraphs $G_1, \dots, G_5$ are pairwise disjoint. 

\medskip

The next step is to modify each subgraph $G_i$ to obtain a new subgraph $G_i'$. This will be done such that the $G_i'$ remain pairwise disjoint, and such that for every $i<j$ there is an edge connecting a vertex in $G_i'$ to a vertex in $G_j'$, resulting in a $K_5$ minor. The modification consists of attaching certain paths to each $G_i$. 

For fixed $i < j$ we define a path $\pi_{i,j}$ as follows. By the assumption that $A_k\cap B_j \cap C_{i,j}\neq \emptyset$ for every $k$, it follows that there is a path contained in $B_j\cap C_{i,j}$ which connects a vertex in $A_i\cap C_{i,j}$ to a vertex in $A_j\cap B_j$. Therefore there exists a path contained in $(B_j\cap C_{i,j})\setminus A_i$ which has one endpoint in $A_j\cap B_j \subset G_j$ and the other endpoint adjacent to a vertex in $A_i\cap C_{i,j}\subset G_i$. Let $\pi_{i,j}$ be such a path. (Note that it is possible that $\pi_{i,j}$ consists of a single vertex.)

We now make two observations concerning the paths $\pi_{i,j}$. The first observation is that, if $\pi_{i,j} \cap \pi_{k,l} \neq \emptyset$, then we must have $j=l$.  To see this, note that  \[\pi_{i,j}\cap \pi_{k,l}\subset B_j\cap C_{i,j} \cap B_l\cap C_{k,l},\] and so if $j\neq l$, then there are four members of $\mathcal{F}$ that intersect. The second observation is that $G_k\cap \pi_{i,j}\neq \emptyset$ if and only if $k=j$. In one direction, it follows by construction that $G_j\cap \pi_{i,j}\neq \emptyset$. For the other direction, suppose $k\neq j$. Since $G_i\subset A_i$ it follows by construction that $G_i\cap \pi_{i,j} = \emptyset$, so we may assume $k\notin\{i,j\}$. But in this case neither $B_j$ nor $C_{i,j}$ are involved in the definition of $G_k$, and again if $G_k\cap \pi_{i,j}\neq \emptyset$, then some four members of $\FF$ intersect.

To complete the construction, we  define
\[G_j' = G_j \cup \left( \textstyle{\bigcup_{i<j}\pi_{i,j}} \right).\]
By the construction of the paths $\pi_{i,j}$ and the observations above it follows that the subgraphs $G_1', \dots, G_5'$ are connected, pairwise disjoint, and for every $i<j$ there is an edge connecting a vertex in $G_i'$ to a vertex in $G_j'$.
\end{proof}

\begin{proof}[Proof of Theorem \ref{fractional helly}]
Let $\FF$ be a connected cover in a $K_5$-minor free graph and let $n = |\FF|$.
We will show that if there are at least $\alpha\binom{n}{3}$ intersecting triples in $\FF$, then the number of intersecting subfamilies of $\FF$ of size four is at least $\alpha' \binom{n}{4}$ where $\alpha'>0$ depends only on $\alpha$. By Corollary \ref{strong fractional} there is an intersecting subfamily of $\FF$ of size at least $\beta n$ where $\beta = 1-(1-\alpha')^{1/4}$

Let $\mathcal{H}$ denote the 3-uniform hypergraph whose vertices and edges correspond to the vertices and 2-simplices of $N(\FF)$. Applying the Erd{\H o}s--Simonovits theorem (Theorem \ref{erdos Simonovits}) to $\mathcal{H}$ we find that there are at least $c_1n^{20}$ copies of $K^{(3)}_{(5,5,10)}$ in $\mathcal{H}$ where $c_1>0$ depends only on $\alpha$. By Lemma \ref{weakcol}, for every copy of $K^{(3)}_{(5,5,10)}$ in $\mathcal{H}$, among the corresponding members of $\FF$ there is an intersecting subfamily of size four. Each such intersecting subfamily is contained in at most $c_2n^{16}$ distinct copies of $K^{(3)}_{(5,5,10)}$ in $\mathcal{H}$ for some absolute $c_2>0$ (which also takes into account the constant number copies of $K^{(3)}_{(5,5,10)}$ on a fixed subset of 20 vertices). Thus there is an $\alpha'>0$, depending only on $c_1$ and $c_2$, such that $\FF$ contains at least $\alpha' \binom{n}{4}$ intersecting subfamilies of size four. 
\end{proof}

\section{Concluding remarks} \label{final}

\subsection{A general fractional Helly theorem} The ``weak colorful Helly theorem'' stated as Lemma \ref{weakcol} can be generalized to force arbitrarily large complete minors using an arbitrary number of color classes. The proof method is the same and yields the following. 

\begin{lem}\label{gen weakcol}
Let $\mathcal{F}$ be a connected cover in a graph $G$ with $|F|=2r+\binom{r}{2}+(N-3)$ for some $N \geq 3$.
Suppose there is a partition $\mathcal{F}=\mathcal{A}_1\cup\cdots\cup\mathcal{A}_N$ where $|\mathcal{A}_{N-2}|=|\mathcal{A}_{N-1}|=r$, $|\mathcal{A}_N|=\binom{r}{2}$ and $|\mathcal{A}_i|=1$ for $i < N-2$ such that $A_1\cap\cdots\cap A_N \neq \emptyset$ for every transversal $A_i \in \mathcal{A}_i$.
If there are no $N+1$ members of $\mathcal{F}$ that intersect, then $G$ has a $K_{r}$ minor.
\end{lem}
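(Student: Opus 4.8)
The plan is to run the argument of Lemma~\ref{weakcol} almost verbatim, using the $N-3$ singleton classes as a fixed ``background'' intersected into every set in sight. Write $\mathcal{A}_m=\{Z_m\}$ for $m<N-2$, set $W=Z_1\cap\cdots\cap Z_{N-3}$ (with the convention $W\cap X=X$ when $N=3$), and write $\mathcal{A}_{N-2}=\{A_1,\dots,A_r\}$, $\mathcal{A}_{N-1}=\{B_1,\dots,B_r\}$, and $\mathcal{A}_N=\{C_{i,j}:1\le i<j\le r\}$, indexing the $\binom{r}{2}$ members of $\mathcal{A}_N$ by pairs. The hypothesis then says precisely that $W\cap A_i\cap B_j\cap C_{k,l}\neq\emptyset$ for all $i,j\in[r]$ and all pairs $\{k,l\}$. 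In particular each of $\{Z_1,\dots,Z_{N-3},A_m,B_m\}$, $\{Z_1,\dots,Z_{N-3},A_m,C_{m,n}\}$ and $\{Z_1,\dots,Z_{N-3},B_j,C_{i,j}\}$ is a simplex of $N(\mathcal{F})$, so by the connected cover property the graphs $W\cap A_m\cap B_m$, $W\cap A_m\cap C_{m,n}$ and $W\cap B_j\cap C_{i,j}$ are all connected. This is the only use of the singleton classes, and it reduces the situation to that of Lemma~\ref{weakcol} with $5$ replaced by $r$ and $10$ by $\binom{r}{2}$.

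First I would build the branch sets of the minor: for $m\in[r]$ put
\[ G_m=(W\cap A_m\cap B_m)\cup\bigcup_{n>m}(W\cap A_m\cap C_{m,n}), \]
which is connected because $W\cap A_m\cap B_m$ meets each $W\cap A_m\cap C_{m,n}$ in the nonempty set $W\cap A_m\cap B_m\cap C_{m,n}$. If a vertex $v$ lay in $G_i\cap G_j$ with $i\neq j$, then $v$ would lie in $Z_1,\dots,Z_{N-3}$, in $A_i$ and $A_j$, and in one additional member ($B$ or $C$) coming from each of $G_i$ and $G_j$; running through the handful of cases shows these are $N+1$ pairwise distinct members of $\mathcal{F}$ with a common vertex, which is forbidden. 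Hence $G_1,\dots,G_r$ are pairwise disjoint.

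Next I would attach connecting paths. For $i<j$ the sets $W\cap A_i\cap B_j\cap C_{i,j}$ and $W\cap A_j\cap B_j\cap C_{i,j}$ are nonempty and lie in the connected graph $W\cap B_j\cap C_{i,j}$, so (exactly as in the proof of Lemma~\ref{weakcol}) one extracts a path $\pi_{i,j}\subseteq(W\cap B_j\cap C_{i,j})\setminus A_i$ with one endpoint in $W\cap A_j\cap B_j\subseteq G_j$ and the other adjacent to a vertex of $W\cap A_i\cap C_{i,j}\subseteq G_i$. The two observations to check are that $\pi_{i,j}\cap\pi_{k,l}\neq\emptyset$ forces $j=l$, and that $G_m\cap\pi_{i,j}\neq\emptyset$ forces $m=j$; in each failing case one exhibits a vertex lying in $Z_1,\dots,Z_{N-3}$ together with four members drawn from the $A$-, $B$- and $C$-classes, i.e.\ $N+1$ members of $\mathcal{F}$ with a common vertex. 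Setting $G_j'=G_j\cup\bigcup_{i<j}\pi_{i,j}$, these two observations make $G_1',\dots,G_r'$ connected and pairwise disjoint, while by construction $\pi_{i,j}\subseteq G_j'$ has an endpoint adjacent to $G_i\subseteq G_i'$, producing an edge between $G_i'$ and $G_j'$ for every $i<j$. This exhibits $K_r$ as a minor of $G$.

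The only real content, and the step I expect to require the most care, is the bookkeeping in the three contradiction arguments above: one must verify that in every bad configuration the offending vertex lies in exactly $N+1$ distinct members of $\mathcal{F}$ — the $N-3$ singletons always contribute, and the remaining four come from the $A$-, $B$- and $C$-classes, where distinctness of the two participating $C$'s uses that their index pairs refer to different branch sets. Beyond this bookkeeping, nothing genuinely new is needed over the proof of Lemma~\ref{weakcol}.
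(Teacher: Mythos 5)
Your proof is correct and follows exactly the route the paper intends: the paper states this lemma without a written proof, noting only that "the proof method is the same" as Lemma~\ref{weakcol}, and your argument carries that out, treating the $N-3$ singleton classes as a fixed background intersection $W$ and verifying that each forbidden configuration yields $N+1$ pairwise distinct intersecting members. The distinctness bookkeeping (in particular that the participating $C$-indices, read as pairs with their smaller element identified, cannot coincide) is handled correctly.
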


By repeated applications of the Erd{\H o}s--Simonovits theorem in combination with Lemma \ref{gen weakcol} as in the proof of Theorem \ref{fractional helly}, we can show that connected covers in graphs of bounded homological dimension satisfy the fractional Helly property for triples. 

\begin{prop}\label{gen frac hel}
For every $\alpha\in(0,1]$ and positive integer $n$, there exists a $\beta = \beta(\alpha, n)>0$ such that the following holds.
Let $\mathcal{F}$ be a connected cover in a graph $G$ with $\gamma(G)\leq n$. If at least $\alpha \binom{|\FF|}{3}$ of the triples in $\FF$ are intersecting, then there is an intersecting subfamily in $\FF$ of size at least $\beta|\FF|$.
\end{prop}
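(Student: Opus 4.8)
The plan is to mimic the proof of Theorem~\ref{fractional helly}, replacing the ``$K_5$-minor-free'' input by the two consequences of the hypothesis $\gamma(G)\le n$ and then iterating the bootstrapping step. First, the easy direction of Theorem~\ref{main-thm} (recalled in the introduction) gives $\gamma(G)\le n\Rightarrow K_{n+3}\not\prec G$. Second, since every subfamily of a connected cover is again a connected cover in $G$, and since the induced subcomplexes of $N(\FF)$ are precisely the nerves of subfamilies of $\FF$, the bound $\gamma(G)\le n$ says exactly that $N(\FF)$ is $(n+1)$-Leray. So the goal is to show that if at least $\alpha\binom{|\FF|}{3}$ triples of $\FF$ intersect, then $f_{n+1}(N(\FF))\ge\alpha'\binom{|\FF|}{n+2}$ for some $\alpha'=\alpha'(\alpha,n)>0$; once this is done, the consequence of Kalai's upper bound theorem noted immediately after Theorem~\ref{d-leray bound}, applied with $d=n+1$, produces an intersecting subfamily of size at least $\beta|\FF|$ with $\beta=1-(1-\alpha')^{1/(n+2)}$, exactly as in Corollary~\ref{strong fractional}.

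The heart of the argument is a bootstrapping induction on $k$ from $3$ up to $n+1$. Write $\mathcal H_k$ for the $k$-uniform hypergraph whose edges are the intersecting $k$-subfamilies of $\FF$, so that $|E(\mathcal H_k)|=f_{k-1}(N(\FF))$ and, by hypothesis, $|E(\mathcal H_3)|\ge\alpha\binom{|\FF|}{3}$. I claim that $|E(\mathcal H_k)|\ge\alpha_k\binom{|\FF|}{k}$ implies $|E(\mathcal H_{k+1})|\ge\alpha_{k+1}\binom{|\FF|}{k+1}$ for some $\alpha_{k+1}>0$ depending only on $\alpha_k$ and $n$. Put $r=n+3$ and $M_k=(k-3)+2r+\binom{r}{2}$. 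Applying the supersaturation form of the Erd{\H o}s--Simonovits theorem on complete multipartite subhypergraphs to the dense $k$-uniform hypergraph $\mathcal H_k$ yields at least $c\,|\FF|^{M_k}$ copies of $K^{(k)}_{(1,\dots,1,r,r,\binom{r}{2})}$ (with $k-3$ singleton classes), for some $c=c(\alpha_k,n)>0$; this is Theorem~\ref{erdos Simonovits} in the case $k=3$. Each such copy is a subfamily $\mathcal F'\subseteq\FF$ with $|\mathcal F'|=M_k$ satisfying the hypotheses of Lemma~\ref{gen weakcol} with $N=k$, since every transversal $k$-tuple is an edge of $\mathcal H_k$ and hence intersecting. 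As $G$ has no $K_r=K_{n+3}$ minor, Lemma~\ref{gen weakcol} forces some $k+1$ members of $\mathcal F'$ to intersect, i.e.\ an edge of $\mathcal H_{k+1}$ inside $\mathcal F'$. A fixed edge of $\mathcal H_{k+1}$ lies in at most $O(|\FF|^{M_k-(k+1)})$ of these copies, so double counting gives $|E(\mathcal H_{k+1})|\ge c'|\FF|^{k+1}\ge\alpha_{k+1}\binom{|\FF|}{k+1}$.

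Starting from $\alpha_3=\alpha$ and iterating this step $n-1$ times yields $f_{n+1}(N(\FF))=|E(\mathcal H_{n+2})|\ge\alpha_{n+2}\binom{|\FF|}{n+2}$ with $\alpha_{n+2}$ depending only on $\alpha$ and $n$, which finishes the proof as explained above. When $n\le 1$ no iteration is needed: for $n=1$ one applies Kalai's theorem to the triples directly, and for $n=0$ one first observes that $\alpha\binom{|\FF|}{3}$ intersecting triples force $f_1(N(\FF))\ge c|\FF|^2$ and then applies Theorem~\ref{d-leray bound} with $d=1$. I expect the main obstacle to be bookkeeping rather than conceptual: one must check that the density constants $\alpha_k$ stay positive and independent of $|\FF|$ through all $n-1$ iterations (which works precisely because $n$, and hence the number of iterations and the size $M_k$ of the pattern at each level, are fixed), and that the complete $k$-partite subhypergraph supplied at level $k$ has exactly the colour-class profile $(1,\dots,1,r,r,\binom{r}{2})$ that Lemma~\ref{gen weakcol} demands. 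The only ingredient not already in the paper is the general-uniformity supersaturation version of Erd{\H o}s's theorem on complete multipartite subhypergraphs, which is standard and reduces to Theorem~\ref{erdos Simonovits} when $k=3$.
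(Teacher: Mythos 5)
Your proposal is correct and is precisely the argument the paper intends: the paper only sketches this proposition in one sentence (``repeated applications of the Erd\H{o}s--Simonovits theorem in combination with Lemma~\ref{gen weakcol} as in the proof of Theorem~\ref{fractional helly}''), and your bootstrapping from intersecting $k$-tuples to $(k+1)$-tuples via the $k$-uniform supersaturation theorem and Lemma~\ref{gen weakcol} with $N=k$, $r=n+3$, followed by Kalai's upper bound theorem for the $(n+1)$-Leray complex $N(\FF)$, is exactly that argument carried out in detail. The only external ingredient, the general-uniformity Erd\H{o}s--Simonovits supersaturation result, is indeed what the cited reference provides, so there is no gap.
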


\subsection{A \texorpdfstring{$\bm{(p,q)}$}{(p,q)} conjecture} We conjecture that Theorem~\ref{connected pq} admits a generalization to arbitrary surfaces. For instance, if Conjecture \ref{minor-nerve} holds, then Proposition \ref{gen frac hel} together with Theorem \ref{abs pq} will imply the following.

\begin{conj}\label{pq surf}
For any integers $p\geq q\geq 3$ and surface $S$,  there exists an integer $C= C(p,q, S)$ such that the following holds. Let $\mathcal{F}$ be a finite family of open connected subsets of $S$ satisfying the condition that the intersection of any members of $\mathcal{F}$ is empty or connected.
If $\mathcal{F}$ has the $(p,q)$ property, then the piercing number of $\mathcal{F}$ is at most $C$.
\end{conj}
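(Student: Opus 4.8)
The plan is to follow the proof of Theorem~\ref{connected pq} with the plane replaced by $S$ and the class of planar graphs replaced by the class of graphs that embed in $S$. There are three ingredients: an approximation step turning the topological family $\mathcal F$ into a connected cover in a graph embeddable in $S$; a bound on the homological dimension of such graphs; and the combination of Proposition~\ref{gen frac hel} with the abstract $(p,q)$ theorem, Theorem~\ref{abs pq}.

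First I would carry out the approximation. After a reduction as in the planar case we may assume $S$ is compact (or that the intersection pattern of $\mathcal F$ is realized inside a compact region of $S$). Since $\mathcal F$ is finite, one fixes a triangulation $G=(V,E)$ of $S$ which is fine enough relative to $\mathcal F$ and associates to each $F_i\in\mathcal F$ the induced subgraph $G_i=G[F_i\cap V]$. Exactly as in the plane, if the triangulation is sufficiently fine then each $G_i$ is connected, each non-empty intersection $G_\sigma$ is connected (using that the members of $\mathcal F$ and their intersections are open connected, hence path-connected, subsets of $S$), and $N(\mathcal F)\cong N(\mathcal F')$ where $\mathcal F'=\{G_i\}$. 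This step uses only the hypotheses on $\mathcal F$ and the local Euclidean structure of $S$, so it should be routine. In particular $\mathcal F'$ is a connected cover in the graph $G$, which embeds in $S$, and $\mathcal F'$ inherits the $(p,q)$ property.

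Next I would bound $\gamma(G)$. Any minor of a graph embeddable in $S$ is again embeddable in $S$, while $K_t$ embeds in $S$ only for $t$ below a threshold determined by Euler's formula and the genus of $S$; hence there is $t=t(S)$ with $K_t\not\prec G$. Granting Conjecture~\ref{minor-nerve}, this gives $\gamma(G)\le t-3=:n(S)$. Consequently $\mathcal F'$, and likewise $(\mathcal F')^{\cap}$, is a connected cover in a graph of homological dimension at most $n(S)$. By Proposition~\ref{gen frac hel}, $(\mathcal F')^{\cap}$ satisfies the fractional Helly property for triples with a density function $\beta$ depending only on $n(S)$; since $\mathcal F'$ has the $(p,q)$ property with $p\ge q\ge 3$, Theorem~\ref{abs pq} applied with $k=3$ bounds the piercing number of $\mathcal F'$, hence of $\mathcal F$, by a constant $C=h(p,q,3,\beta)=C(p,q,S)$, as required.

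The main obstacle is precisely the appeal to Conjecture~\ref{minor-nerve}: without it we do not know that $\gamma$ is bounded on the class of graphs embeddable in $S$. In fact the argument only needs the (formally weaker) statement that $\gamma(G)$ is bounded in terms of $t$ for every $K_t$-minor-free graph, and for Conjecture~\ref{pq surf} it would even suffice to bound $\gamma$ on bounded-genus graphs directly, in the spirit of Theorem~\ref{planar}. That, however, is where the real work lies: the planar bound relied essentially on the combinatorial identity of Lemma~\ref{isoperm} together with the planar crossing argument (Figure~\ref{fig:crossings}), neither of which obviously survives on a surface of positive genus. By contrast, the reduction from $q\ge 4$ to $q\ge 3$ — which in the planar proof went through Lemma~\ref{weakcol} and its generalization Lemma~\ref{gen weakcol} — carries over verbatim via Proposition~\ref{gen frac hel}, so it presents no additional difficulty here.
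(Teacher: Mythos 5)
The statement you are asked to prove is labelled a \emph{conjecture} in the paper, and the paper offers no proof of it: it only remarks that the statement \emph{would} follow from Conjecture~\ref{minor-nerve} together with Proposition~\ref{gen frac hel} and Theorem~\ref{abs pq}. Your proposal reconstructs exactly that conditional derivation --- approximate $\mathcal{F}$ by a connected cover in a graph $G$ embedded in $S$, use Euler's formula to exclude $K_t$ minors for $t$ large in terms of the genus, invoke Conjecture~\ref{minor-nerve} to bound $\gamma(G)$, and then feed Proposition~\ref{gen frac hel} into Theorem~\ref{abs pq} with $k=3$ --- and you correctly identify that the argument is not a proof because it rests on the unproven Conjecture~\ref{minor-nerve} (or, what would suffice, a direct bound on the homological dimension of bounded-genus or $K_t$-minor-free graphs in the spirit of Theorem~\ref{planar}).

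That gap is genuine and is precisely why the paper states the result as a conjecture rather than a theorem: the planar bound $\gamma(G)\leq 2$ depends on the combinatorial identity of Lemma~\ref{isoperm} and on a crossing argument that uses the Jordan-curve structure of a face boundary in $\mathbb{S}^2$, and neither is known to extend to surfaces of positive genus. Your diagnosis of where the difficulty lies, and your observation that the reduction from quadruples to triples (via Lemma~\ref{gen weakcol} and Proposition~\ref{gen frac hel}) carries over without change, both agree with the paper. So your write-up is a correct account of the state of the problem, but it is a conditional argument, not a proof, and no complete proof exists in the paper to compare it against.
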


\subsection{Colorful Helly theorems}
The colorful Helly theorem discovered by Lov{\' a}sz, and independently by B{\'a}r{\'a}ny \cite{colcar}, is another classical generalization of Helly's theorem. Kalai and Meshulam \cite{topcol} showed that this result can be extended to arbitrary $d$-Leray complexes, and as a consequence we get the following. 

\begin{cor}
Let $\FF$ be a connected cover in a $K_5$ minor-free graph, and suppose there is a partition $\FF = \FF_1 \cup \FF_2 \cup \FF_3 \cup \FF_4$ into non-empty parts. If every colorful $4$-tuple is intersecting, then $\mathcal{F}_i$ is an intersecting family for some $1 \leq i \leq 4$.
\end{cor}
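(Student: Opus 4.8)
The plan is to derive this corollary directly from the colorful Helly theorem for $d$-Leray complexes due to Kalai and Meshulam \cite{topcol}, exactly as one derives the classical colorful Helly theorem for convex sets from the ordinary Helly theorem. The key input is that the nerve $N(\FF)$ of a connected cover in a $K_5$ minor-free graph is $3$-Leray: indeed, any induced subcomplex $N(\FF)[W]$ is itself the nerve of the subfamily $\{G_i : i \in W\}$, which is again a connected cover in the same ($K_5$ minor-free) graph, and Theorem \ref{main-thm}\eqref{k5minor} together with Proposition \ref{dLeray} shows that $\tilde H_i(N(\FF)[W]) = 0$ for all $i \geq 3$.

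First I would recall the statement of the topological colorful Helly theorem from \cite{topcol}: if $K$ is a $d$-Leray complex on vertex set $V$ and $V = V_1 \cup \cdots \cup V_{d+1}$ is a partition into nonempty classes such that every colorful $(d+1)$-subset $\{v_1, \dots, v_{d+1}\}$ (with $v_i \in V_i$) spans a simplex of $K$, then there exists an index $i$ such that the full simplex on $V_i$ belongs to $K$. Applying this with $d = 3$ to $K = N(\FF)$ and the partition $\FF = \FF_1 \cup \FF_2 \cup \FF_3 \cup \FF_4$ (equivalently, the induced partition of the vertex set of $N(\FF)$), the hypothesis that every colorful $4$-tuple is intersecting says precisely that every colorful $4$-subset is a simplex of $N(\FF)$. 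The conclusion then gives an $i$ such that all of $\FF_i$ spans a simplex of $N(\FF)$, which by definition of the nerve means $\bigcap_{S \in \FF_i} S \neq \emptyset$, i.e.\ $\FF_i$ is an intersecting family.

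The only point requiring a word of care is the translation between "colorful $4$-tuple" in the family language and "colorful $4$-subset of vertices" in the simplicial-complex language, and the fact that the parts $\FF_i$ are assumed nonempty (so the hypothesis of the Kalai–Meshulam theorem is met); both are immediate. I do not anticipate a genuine obstacle here — the content is entirely in Theorem \ref{main-thm} (which supplies the $3$-Leray property) and in the cited theorem of Kalai and Meshulam; the corollary is a formal consequence, and the proof is a short paragraph invoking these two results.
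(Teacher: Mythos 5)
Your proposal is correct and matches the paper's intended derivation: the paper presents this corollary as an immediate consequence of the Kalai--Meshulam topological colorful Helly theorem for $d$-Leray complexes, combined with the fact (from Theorem \ref{main-thm}) that the nerve of a connected cover in a $K_5$ minor-free graph is $3$-Leray. Your verification that induced subcomplexes of the nerve are again nerves of connected covers in the same graph is exactly the observation the paper uses to establish the Leray property.
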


By the same proof method as in \cite{Kim17}, we also get the following colorful variant of the fractional Helly theorem.

\begin{thm}
For every $\alpha\in(0,1]$, there exists $\beta = \beta(\alpha)\in(0,1]$ tending to $1$ as $\alpha$ tends to $1$ such that the following holds. Let $\FF$ be a connected cover in a $K_5$-minor-free graph, and suppose there is a partition $\FF = \FF_1 \cup \FF_2 \cup \FF_3 \cup \FF_4$ into non-empty parts. If at least $\alpha \prod_{i=1}^4 |\FF_i|$ of the colorful 4-tuples are intersecting, then for some $1\leq i \leq 4$, $\mathcal{F}_i$ contains an intersecting subfamily of size $\beta |\mathcal{F}_i|$.
\end{thm}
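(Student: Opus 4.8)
The plan is to reduce the statement to the colorful fractional Helly theorem for $d$-Leray complexes with $d=3$, and then to run the argument of \cite{Kim17}.

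First I would verify that $N(\FF)$ is a $3$-Leray complex. Every induced subcomplex of $N(\FF)$ is the nerve of a subfamily $\FF'\subseteq\FF$, and $\FF'$ is a connected cover in the subgraph $G'=\bigcup_{G_i\in\FF'}G_i$ of $G$: indeed each $G_i=G[V_i]\in\FF'$ equals $G'[V_i]$, so it is induced in $G'$, and $N(\FF')\subseteq N(\FF)$ so the relevant intersections are connected. Since $G$ has no $K_5$ minor, neither does $G'$, so Theorem~\ref{main-thm} gives $\gamma(G')\leq 2$; hence $\tilde H_i(N(\FF'))=0$ for all $i\geq 3$. Thus every induced subcomplex of $N(\FF)$ has vanishing reduced homology in dimensions $\geq 3$, i.e.\ $N(\FF)$ is $3$-Leray, and the theorem becomes the following general assertion applied with $d=3$: if $K$ is a $d$-Leray complex whose vertex set is partitioned into nonempty classes $V_1,\dots,V_{d+1}$ and at least $\alpha\prod_i|V_i|$ of the colorful $(d+1)$-transversals span a face of $K$, then some $V_i$ contains a face of size at least $\beta(\alpha)|V_i|$, where $\beta(\alpha)\to 1$ as $\alpha\to 1$.

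To prove this general assertion I would follow Kim's method \cite{Kim17}. Two ingredients are available. The first is the colorful Helly theorem for $d$-Leray complexes of Kalai and Meshulam \cite{topcol} --- which in the present setting is exactly the corollary stated just above --- asserting that if \emph{every} colorful $(d+1)$-transversal spans a face, then some $V_i$ is itself a face. The second is Kalai's upper bound theorem (Theorem~\ref{d-leray bound}), which bounds the number of $d$-faces of a $d$-Leray complex having no large face, together with its colorful (multipartite) refinement. The scheme is then: from the hypothesis that at least $\alpha\prod_i|V_i|$ colorful $d$-transversals are faces, deduce --- via the colorful refinement of the extremal inequality --- the existence of a face $F$ of $K$ satisfying $\sum_i\bigl(|F\cap V_i|-\beta|V_i|\bigr)\geq 0$, hence $|F\cap V_i|\geq\beta|V_i|$ for some $i$ by averaging; since $F\cap V_i$ is again a face of $K$, this gives the conclusion. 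The sharpness of Theorem~\ref{d-leray bound}, inherited by its colorful version, is what yields $\beta(\alpha)\to 1$ as $\alpha\to 1$ (as $\alpha\to 1$ one recovers the colorful Helly situation, where $\beta=1$).

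The main obstacle is the possible imbalance of the color classes $\FF_1,\dots,\FF_4$: the colorful density condition constrains only the $\prod_i|\FF_i|$ colorful transversals, which can be a vanishing fraction of all $\binom{|\FF|}{4}$ quadruples, so a black-box appeal to the non-colorful fractional Helly theorem (Corollary~\ref{strong fractional}) would lose a constant factor depending on the number of classes and destroy the conclusion $\beta\to 1$. Circumventing this forces a genuinely colorful argument --- a multipartite version of Kalai's upper bound theorem, proved by algebraic shifting in the spirit of \cite{shifting}, or an equivalent induction over links --- and it is here that the $d$-Leray hypothesis (rather than merely a bound on the Helly number) is indispensable. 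This is the technical core carried out in \cite{Kim17}, and it transfers once the reduction to $3$-Leray complexes above is in place.
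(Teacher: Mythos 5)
Your argument is correct and coincides with the paper's: the paper's entire proof of this theorem is the observation that $N(\FF)$ is $3$-Leray (which you verify via Theorem~\ref{main-thm}) followed by an appeal to ``the same proof method as in \cite{Kim17}'', which is exactly the reduction and citation you make. Your discussion of why the non-colorful Corollary~\ref{strong fractional} cannot be invoked as a black box for unbalanced color classes correctly identifies the reason the genuinely colorful machinery of \cite{Kim17} is required.
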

We also note that if Conjecture \ref{minor-nerve} holds, then the statements above will generalize to connected covers in arbitrary $K_r$ minor-free graphs.

\subsection{Extension to higher dimensions} It is straight-forward to extend the notions introduced in this paper to higher-dimensional simplicial complexes, but so far we have no results in this direction. As a first step, we conjecture the following generalization of Theorem \ref{planar}.

\begin{conj}
There exists an integer $d$ such that the following holds.
Let $K$ be a 2-dimensional simplicial complex, and let $\FF =\{K_1, \dots, K_n\}$ be subcomplexes of $K$ where $K_\sigma = \bigcap_{i\in \sigma}K_i$ is simply connected for every $\sigma\in N(\FF)$. If $K$ is embeddable in $\mathbb{R}^4$, then $N(\FF)$ is $d$-Leray.
\end{conj}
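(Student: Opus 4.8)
The plan is to imitate, one critical dimension at a time, the proof of Theorem~\ref{3homol}. Write $d$ for the desired Leray bound; the natural guess, which we expect to be optimal, is $d=5$: the $2$-skeleton $\Delta_5^{(2)}$ of the $5$-simplex embeds in $\mathbb{R}^4$, and its vertex-deletion cover (the six $2$-skeleta on $5$ of its $6$ vertices) has nerve $\partial\Delta^5\cong\mathbb{S}^4$ with all intersections simply connected, so no smaller bound can hold. First I would reduce to showing that $\tilde{H}_d(N(\FF))=0$ for every family of the type in the statement. The Leray condition comes for free: an induced subcomplex of $N(\FF)$ is the nerve of a subfamily, which again lives on a $2$-complex embeddable in $\mathbb{R}^4$ with simply connected intersections; and, exactly as in Proposition~\ref{dLeray}, the star--link sequence \eqref{star-link} pushes a non-vanishing class in $\tilde{H}_i(N(\FF))$ with $i>d$ down to a non-vanishing class in $\tilde{H}_{i-1}$ of the nerve of a family of the same type sitting inside one member $K_x$ (here one uses that $\bigcap_{j\in\sigma'}(K_j\cap K_x)=K_{\sigma'\cup\{x\}}$ is simply connected). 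So it is enough to rule out $(x,L)$-critical $d$-cycles.

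\textbf{Rebuilding Section~\ref{planar graphs} one dimension up.} The ingredients needed are: (i) a ``thickening'' lemma in the spirit of Lemma~\ref{2connected}, replacing the $2$-connectedness of the members by simple connectedness together with the requirement that $K$ and each member be PL-embedded in $\mathbb{S}^4$ in sufficiently general position (so that the relevant complementary data are manifolds); (ii) a ``filling'' move that grows a fixed member $K_x$ toward $K$ while preserving the hypotheses and the inclusion $N(\FF)\subset N(\FF')$; (iii) a higher-dimensional analogue of the combinatorial identity of Lemma~\ref{isoperm}: from a simplicial $(d-1)$-cycle $\partial\beta$ equipped with a suitable ``crossing pattern'' on its simplices one should produce a $d$-chain $T$ with $\partial T=\partial\beta$, the argument again being a parity count that genuinely needs $\mathbb{Z}_2$-coefficients; and (iv) the geometric input that turns combinatorial crossings into honest intersections. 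In (iv), the planar statement ``two arcs with interleaved endpoints on a circle must cross'' is replaced by: two $2$-chains in a PL $4$-ball whose boundaries are disjoint $1$-cycles of odd $\mathbb{Z}_2$-linking number in the bounding $3$-sphere must intersect (mod-$2$ intersection number equals mod-$2$ linking number). Correspondingly, a simplex $\tau$ of $\partial\beta$ would be represented not by a single vertex on a bounding cycle, as in Section~\ref{planar graphs}, but by a $1$-cycle inside $K_\tau$ lying on an appropriate $3$-sphere --- this is where simple connectedness of $K_\tau$ enters, to control how these $1$-cycles bound --- and the mutual linking numbers of these $1$-cycles would play the role of the cyclic order that feeds into the identity of (iii).

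\textbf{The main obstacle.} The step I expect to be the real obstacle is (ii). In the planar case one grows $G_x$ by adjoining, one at a time, the vertices lying inside a complementary \emph{face} $f$ of $G_x$, and the entire argument rests on the fact that $f$ is a disk bounded by a cycle of $G_x$, so every path leaving the interior of $f$ meets that cycle. But a $2$-complex $K_x$ has codimension $2$ in $\mathbb{R}^4$ and hence does not separate it: $\mathbb{S}^4\setminus K_x$ is connected, and $K_x$ contains no $3$-sphere, so there is no filtration of the complement into regions bounded by spheres inside $K_x$. A substitute would have to come from a handle (or relative CW) decomposition of the complement of a regular neighbourhood of $K_x$ --- adjoining $3$- and $4$-handles incrementally, with attaching $2$-spheres isotoped to lie near $K_x$ --- and the incremental non-vanishing of $\tilde{H}_d$ would have to be re-derived in that language and then matched with the linking lemma of (iv). Getting (ii) to cooperate with (iv) is, I expect, where a genuinely new idea is needed; by contrast (iii), though it will require careful bookkeeping, looks within reach of the $\mathbb{Z}_2$-counting already used for Lemma~\ref{isoperm}.
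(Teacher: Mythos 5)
This statement is posed in the paper as an \emph{open conjecture} (the final conjecture of Section~\ref{final}); the authors explicitly write that they have no results in this direction, so there is no proof in the paper to compare yours against. Read on its own terms, your proposal is a research plan rather than a proof: of the ingredients you list, only the reductions are actually carried out. The lower-bound example is correct (the vertex-deletion cover of $\Delta_5^{(2)}$, which embeds in $\mathbb{R}^4$ by the sharp form of van Kampen--Flores, has nerve $\partial\Delta^5\cong\mathbb{S}^4$ with all intersections simply connected, so $d\geq 5$), and the descent via the star--link sequence \eqref{star-link}, as in Proposition~\ref{dLeray}, does reduce the Leray property to the vanishing of a single top homology group for all families of the given type. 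But items (ii), (iii) and (iv) are left as descriptions of what \emph{should} exist, and (ii) is precisely where the engine of the planar proof lives: Theorem~\ref{3homol} is an induction on filling faces, and a face is available only because a $2$-connected planar graph separates $\mathbb{S}^2$ into disks bounded by cycles \emph{of the graph}, so that every path leaving the region must cross $G_x$. A $2$-complex $K_x\subset\mathbb{R}^4$ has codimension $2$, its complement is connected, and $K_x$ contains no $3$-sphere on which to place the $1$-cycles you invoke in (iv); the handle-decomposition substitute you gesture at is therefore not an adaptation of the existing argument but the missing idea itself, as you acknowledge.

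To be clear about what is and is not salvageable: the linking-number replacement for ``interleaved endpoints on a circle must be joined by crossing arcs'' is a sensible candidate for (iv), and the $\mathbb{Z}_2$-parity mechanism of Lemma~\ref{isoperm} may well generalize as you suggest in (iii); but until (ii)--(iv) are actually proved and made to mesh, the argument does not establish the conjecture for $d=5$ or for any other value of $d$. The statement remains open.
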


\section*{Acknowledgment}
The authors are grateful to an anonymous referee for helpful comments which improved the exposition of our results.

\end{document}